\documentclass[10pt]{amsart}
\usepackage{amsfonts}
\usepackage{amssymb}
\usepackage{amsmath}
\newtheorem{theorem}{Theorem}[section]
\newtheorem{proposition}[theorem]{Proposition}
\newtheorem{lemma}[theorem]{Lemma}

\newtheorem{corollary}[theorem]{Corollary}

\def\R{{\mathbb R}}

\begin{document}

\title{Restriction for homogeneous polynomial surfaces in $\R^3$}
\author{A. Carbery, C. E. Kenig \and Sarah N. Ziesler}
\thanks{{\em 2010 Mathematics Subject Classification}: Primary 42B99, Secondary 42B25}
\thanks{{\em Key words}: restriction, homogeneous polynomials, affine invariance, Littlewood-Paley theory}
\thanks{The second author was partially supported by NSF grant DMS-0456583 and DMS-0968472.}

\begin{abstract}
We prove an optimal restriction theorem for an arbitrary homogeneous polynomial hypersurface (of degree at least 2) in $\R^3,$ with affine curvature introduced as a mitigating factor.

\end{abstract}

\maketitle

\section{Introduction and statement of result}

If $S$ is a smooth $(n-1)$-dimensional submanifold in $\R^n (n\geq 3),$ $S_0$ is a compact subset with non-vanishing Gaussian curvature and $d\sigma$ is the induced Lebesgue measure, then the $L(p,q)$ Stein--Tomas restriction theorem (\cite{Stein},\cite{T})) says that, for all $f\in L^p(\R^n),$
$$\left(\int_{S_0}|\hat{f}|^qd\sigma \right)^{\frac1q}\leq C\|f\|_p,$$
for $1\leq p\leq \frac{2n+2}{n+3}, q\leq\left(\frac{n-1}{n+1}\right)p',$ where $\frac1p+\frac{1}{p'}=1.$ 
The key result is when $q=2,p=\frac{2n+2}{n+3}$ (the so-called $L^2$ restriction theorem) as the full range then follows by interpolation with the case $p=1.$

Our interest lies with analogues of the Stein--Tomas restriction
theorem for surfaces whose Gaussian curvature may vanish. In the case
where the surface is given by a graph, noncompact surfaces may also be
considered.  Other authors who have considered this problem include those of \cite{AS}, \cite{Bak}, \cite{BIT}, \cite{CZ}, \cite{CKZ2} and \cite{Oberlin}.
The first restriction theorems  for surfaces in $\R^n, n\geq 3,$ whose Gaussian curvature may vanish were given in \cite{KPV}.

We consider surfaces $\Gamma(\xi,\mu)=(\xi,\mu,\phi(\xi,\mu))$ in
$\R^3,$ where $\xi,\mu\in\R$ and $\phi :\R^2\rightarrow \R.$ To compensate for the possibly vanishing curvature we 
follow Sj\"{o}lin, \cite{S}, and also \cite{D} and \cite{KPV}, and
insert the mitigating factor $$|K_{\phi}
(\xi,\mu)|^{\frac14}=|[\det\text{Hess}\;\phi(\xi,\mu)]|^{\frac14}$$
into the left-hand side of the restriction inequality. This choice of
mitigating factor preserves the affine invariance of the restriction inequality. Moreover, with this mitigating factor, the restriction inequality is also invariant under reparametrization of the hypersurface. Because of this, we consider this to be the optimal choice of mitigating factor.

The standard approach to prove $L^2$ restriction theorems is via decay estimates for the Fourier transform of the measure supported on the surface. For example, Kenig, Ponce and Vega,  \cite{KPV}, showed that
the (mitigated) $L^2$ restriction property follows from a decay estimate of the form
$$\left|\int_{\Omega} e^{i(x\xi+y\mu+t\phi(\xi,\mu))}\psi(\xi,\mu)|K_{\phi}(\xi,\mu)|^{\frac12+i\beta}d\xi d\mu\right|\leq C (1+|\beta|)^2 |t|^{-1},$$
where $\phi:\Omega\rightarrow \R,$ $\psi\in C^{\infty}(\overline{\Omega})$ and $\psi$ and all its derivatives vanish on the boundary of $\Omega.$ Moreover, they used this to obtain a restriction theorem for polynomials of the form $p(\xi,\mu)=p_1(\xi)+p_2(\mu),$ where $p_1,p_2$ are arbitrary one variable polynomials of degree at least 2, as well as for homogeneous elliptic polynomials whose curvature is non-vanishing on the circle.  (We note that \cite{KPV} gives results for $\R^n, n\geq 3;$ we are only stating the $n=3$ case here.)
Unfortunately the desired decay estimates will not hold in general. Indeed they may fail even
for finite-type convex surfaces, as shown in \cite{CDMM}. In this paper we prove a restriction theorem for homogeneous polynomials; our basic approach is that of \cite{CKZ2}, adapted to the non-radial case.  Our method is special to $n=3,$ as will be seen.

\begin{theorem}\label{thm1}
Let $\Gamma(\xi,\mu)=(\xi,\mu,\phi(\xi,\mu))$ where $\phi(\xi,\mu)$ is
a polynomial, homogeneous of degree $d\geq 2.$
Then there is a constant $C$ (depending on $\phi$) such that
\begin{equation}\label{eq1}
\left(\int_{{\mathbb R}^2}|\hat{f}(\Gamma(\xi,\mu))|^2 |K_{\phi}(\xi,\mu)|^{\frac14}d\xi d\mu\right)^{\frac12}\leq C\|f\|_{\frac43},
\end{equation}
for all $f\in L^{\frac43}({\mathbb R}^3).$ 
\end{theorem}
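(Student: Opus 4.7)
The plan is to reduce, via an anisotropic dilation coming from the homogeneity of $\phi$ and a Littlewood--Paley decomposition in the third variable, to an estimate on a unit-scale piece of the surface, and then to handle the possibly-vanishing affine curvature on this piece by a further angular decomposition.

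I would begin by decomposing $f = \sum_{k\in\mathbb Z} f_k$ in Littlewood--Paley fashion in the third frequency variable, with $\hat f_k$ supported in $\{|t|\sim 2^k\}$, so that the restrictions $\hat f_k|_\Gamma$ live on the disjoint shells $\{|\phi|\sim 2^k\}$ of $\Gamma$. The homogeneities of $\phi$ (degree $d$) and of $|K_\phi|^{1/4}$ (degree $(d-2)/2$) make \eqref{eq1} invariant under the anisotropic dilation $(\xi,\mu,t)\mapsto(\lambda\xi,\lambda\mu,\lambda^d t)$, and choosing $\lambda = 2^{-k/d}$ rescales each $f_k$ to one with Fourier support in $\{|t|\sim 1\}$. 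Thus, modulo an orthogonality/square-function argument in the third variable (as in the approach of \cite{CKZ2}), it suffices to prove \eqref{eq1} uniformly for functions with unit-frequency support in this last variable.

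For such a function $f$, the relevant region of $\Gamma$ is $\{|\phi(\xi,\mu)|\lesssim 1\}$. In polar coordinates write $\phi = r^d\Phi(\theta)$; a direct computation gives $K_\phi = r^{2d-4}Q(\theta)$ with $Q(\theta) = (d-1)[d\Phi\Phi'' + d^2\Phi^2 - (d-1)(\Phi')^2]$. The effective region becomes $r\lesssim|\Phi(\theta)|^{-1/d}$, a bounded region except for cusps at the zeros of $\Phi$. Since $\Phi$ and $Q$ are trigonometric polynomials, they have only finitely many zeros on $[0,2\pi)$ (apart from degenerate cases such as $\phi=(a\xi+b\mu)^d$, which must be treated separately). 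I would then decompose the angular integration into arcs on which both $|\Phi|$ and $|Q|$ are uniformly bounded below, and into dyadic angular pieces $|\theta-\theta_0|\sim 2^{-n}$ near each bad direction $\theta_0$. On the good arcs the surface has non-vanishing Gaussian curvature and the decay estimates of \cite{KPV} give the desired $L^2$ restriction directly. On each bad dyadic piece, the local power-law vanishing of $\Phi$ or $Q$ near $\theta_0$ enables a further rescaling in $\theta$, in the spirit of the radial rescaling of \cite{CKZ2}, reducing again to a non-vanishing-curvature estimate at unit scale.

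The main obstacle will be the near-zero analysis: at a common zero of $\Phi$ and $Q$, the weight $|K_\phi|^{1/4}$ is not bounded above and the surface is both curvature-degenerate and cuspidal, so the angular rescaling must be calibrated against the precise orders of vanishing of $\Phi$ and $Q$, and the resulting pieces summed efficiently via an additional orthogonality argument. It is in this part of the argument that the method is specific to the ambient dimension $n=3$: the zero set of $Q$ lives in the one-dimensional angular variable $\theta$ and is therefore a finite collection of points, which keeps the dyadic angular decomposition tractable. In higher dimensions the corresponding variety on the sphere can carry nontrivial geometry, and the present approach does not directly extend.
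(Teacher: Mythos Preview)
Your outline shares its opening moves with the paper---exploit homogeneity and a Littlewood--Paley decomposition to reduce to a single scale, then analyse the angular variable---but it diverges at the heart of the matter, and the crucial step in your plan is not worked out.

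After the scale reduction, you propose to decompose angularly and, on each dyadic piece near a bad direction, perform an ``angular rescaling'' that reduces to a non-vanishing-curvature Stein--Tomas estimate. This is not what the paper does, and it is not clear that it can be made to work. An angular dilation is not an affine map of $(\xi,\mu)$, so it does not interact cleanly with the affine-invariant weight $|K_\phi|^{1/4}$; if instead you mean a shear-plus-anisotropic-dilation, then the order of vanishing $m$ of $Q$ at $\theta_0$ governs how the Hessian determinant rescales, and for $m\neq 2$ no single affine map makes the curvature uniformly $\sim 1$ on the piece. Even granting a per-piece estimate, you would still need an orthogonality mechanism to sum the pieces in $L^{4/3}\to L^2$, and the one-variable Littlewood--Paley theory in $t$ that you invoke does not supply it. Your reference to \cite{CKZ2} for a ``radial rescaling'' is also off the mark: the method there is not a rescale-to-nondegenerate argument but the bilinear $L^4$ approach that the present paper adapts.

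What the paper actually does after the scale reduction is quite different. It introduces a further Littlewood--Paley theory in the level sets of the one-variable polynomials $K(1,\mu/\xi)$ and $\phi'(1,\mu/\xi)$ (Proposition~\ref{prop2}), so that on each piece all of $|(\xi,\mu)|$, $|K(1,\mu/\xi)|$, and $|\phi'(1,\mu/\xi)|$ are essentially constant. It then uses $\|Ug\|_{L^4}^2=\|(Ug)(\overline{Ug})\|_{L^2}$, a four-variable change of coordinates $(\xi_1,\xi_2,\eta_1,\eta_2)\mapsto(u,v,w,z)$, Plancherel in $(u,v,w)$, and Cauchy--Schwarz in $z$; the resulting pointwise bound (Proposition~\ref{prop2.5}) is the technical core, and it rests on the inequality $|K(1,\eta)|\geq\epsilon\,|\phi'(1,\eta)|^2$ established---possibly after an affine reparametrisation---in Lemmas~\ref{lemmaZ1}--\ref{lemma2.3'}. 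Because the change of variables is only finitely-many-to-one rather than injective, Milnor's bound on Betti numbers of real algebraic sets is used to control the multiplicity. None of these ingredients appears in your proposal, and the step you flag as the ``main obstacle'' is exactly where they are needed.
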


\noindent
{\bf Remarks:} 1. We fix notation and take $\phi(\xi,\mu)=\sum_{i=0}^d a_i\xi^i\mu^{d-i}$ throughout.

2.  The constant $C$ depends on the polynomial $\phi$. It would be of great interest to
obtain a {\it uniform } result, i.e. where $C$ would depend only on
the degree of the polynomial.\medskip

Throughout what follows $C$ and $c$ will denote constants, possibly depending on $\phi,$ whose value may change from line to line. We will also write $K$ for $K_{\phi},$ to simplify notation.

A key ingredient in our proof is the following Littlewood-Paley theorem for homogenenous polynomials. The proof of this may be found in Section 3.

 We define, for $\lambda>1$ fixed,
\begin{equation}\label{eq***}
\psi^{\lambda}(t)=\begin{cases} 1 & 1\leq |t|\leq\lambda\\ 0 & |t|\leq \frac{1}{\lambda}\ \text{or}\ |t|\geq \lambda^2\end{cases}\quad\text{and} \quad  \psi^{\lambda}_k(t)=\psi^{\lambda}(\lambda^k t).
\end{equation}

Note that $1\leq \sum_k |\psi_k^\lambda (t)|^2,$ for $t\neq 0.$ When
$\lambda =2$ we omit the superscript $\lambda$ in the definitions of
$\psi^\lambda$ and $\psi^{\lambda}_k.$ In the proposition below the
symbol `$p$' is used in two different ways; this should not cause confusion.

\begin{proposition}\label{prop2}
a) Let $p : \R^2\rightarrow \R$ be a non-constant homogeneous polynomial and let $\lambda>1$ be fixed. 

Define
$$\widehat{(T_k f)}(\xi,\mu)=\psi^{\lambda}_k(p(\xi,\mu))\hat{f}(\xi,\mu).$$
Then there are absolute constants $c_1$ and $c_2$ (depending only on $\lambda$ and the degree of the polynomial) such that 
\begin{equation}\label{eq4}
c_1\leq \frac{\left\|\left(\sum_k |T_k f|^2\right)^\frac{1}{2}\right\|_p }{\|f\|_p}\leq c_2,
\end{equation}
for all $f\in L^p(\R^2),$ $1<p<\infty.$

b) Let $q:\R\longrightarrow \R$ be a one-variable non-constant
polynomial and let $\lambda>1$ be fixed.

Define 
$$\widehat{(\tilde{T}_k g)}(\xi,\mu) = \psi^{\lambda}_k (q(\mu/\xi))\hat{g}(\xi,\mu),$$
for $\hat{g}$ with support in $\{(\xi,\mu)\mid |\mu|\leq |\xi|\}$ Then there are absolute constants $c'_1$ and $c'_2$ (depending only on $\lambda$ and the degree of the polynomial) such that 
\begin{equation}\label{eq4b}
c'_1\leq \frac{\left\|\left(\sum_k |\tilde{T}_k g|^2\right)^\frac{1}{2}\right\|_p }{\|g\|_p}\leq c'_2,
\end{equation}
for all $g\in L^p(\R^2),$ $1<p<\infty,$ such that $\hat{g}$ has support in $\{(\xi,\mu)\mid |\mu|\leq |\xi|\}$. 
\end{proposition}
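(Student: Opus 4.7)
My plan is to establish Part (b) first---it is essentially a one-dimensional statement---and then reduce Part (a) to Part (b) together with the classical radial Littlewood-Paley theorem via an angular decomposition of $\R^2$.

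\textbf{Part (b).} The multiplier $\psi_k^\lambda(q(\mu/\xi))$ is zero-homogeneous and depends only on $u=\mu/\xi$. The heart of the matter is a one-dimensional polynomial Littlewood-Paley theorem: for a nonconstant polynomial $q$ on $\R$, if $\widehat{S_k h}(u)=\psi_k^\lambda(q(u))\hat h(u)$, then $\|(\sum_k|S_k h|^2)^{1/2}\|_{L^p(\R)}$ is equivalent to $\|h\|_{L^p(\R)}$ for $1<p<\infty$. For each $k$ the support of $\psi_k^\lambda\circ q$ is a union of at most $2\deg(q)$ smoothly cut-off intervals, the collection across $k$ has bounded overlap, and Rubio de Francia / Bourgain square function estimates for smoothly localized disjoint intervals yield the upper bound. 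The matching lower bound follows by duality together with Khintchine's inequality applied to $\sum_k r_k(\omega)S_k$, using $\sum_k|\psi_k^\lambda|^2\geq 1$. To lift to two dimensions I would exploit the support condition $|\mu|\leq|\xi|$ and the zero-homogeneity: after a smooth angular partition of unity separating the zeros of $q(\tan\theta)$, the randomized multiplier $\sum_k r_k(\omega)\psi_k^\lambda(q(\mu/\xi))$ satisfies Mihlin-type estimates on each piece (its angular derivatives being controlled by the one-dimensional analysis), and summation over pieces together with Khintchine closes the argument.

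\textbf{Part (a).} Write $p(\xi,\mu)=\rho^d P(\theta)$ in polar coordinates with $P(\theta)=p(\cos\theta,\sin\theta)$. Introduce a smooth angular partition of unity adapted to the zero set of $P$: generic sectors $\Sigma_j$ where $|P(\theta)|\geq c>0$, and narrow critical sectors $\Sigma_j'$ centered at each zero of $P$. In a generic sector, $|p(\xi,\mu)|\sim\lambda^{-k}$ is equivalent to $\rho\sim\lambda^{-k/d}$ up to bounded factors, so $T_k$ restricted there acts as a smoothly cut-off standard dyadic radial Littlewood-Paley projection, handled by the classical radial theorem. In a critical sector, rotate so the zero lies at $\theta=\pi/2$ and factor $p(\xi,\mu)=\xi^m r(\xi,\mu)$ with $r$ homogeneous of degree $d-m$ and $r(0,1)\neq 0$; for $|\xi|\leq c|\mu|$ this gives $p=\xi^m\mu^{d-m}\tilde r(\xi/\mu)$ with $\tilde r$ smooth and nonvanishing. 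The smooth zero-homogeneous factor $\tilde r(\xi/\mu)$ is handled by Part (b) in its symmetric form (supports in $|\xi|\leq|\mu|$), while the monomial piece $\xi^m\mu^{d-m}$, written in logarithmic coordinates as a decomposition along a line of slope $-(d-m)/m$ in $(\log|\xi|,\log|\mu|)$, reduces via a linear change of coordinates to standard one-dimensional dyadic Littlewood-Paley combined with Fubini in the transverse direction.

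\textbf{Main obstacle.} The principal difficulty is the critical sectors: the smooth angular cutoffs must themselves define square-function-bounded operators, and the multiplicities of the zeros of $P$ determine an anisotropic geometry of the level sets $\{|p|\sim\lambda^{-k}\}$ that couples radial and angular scales. Verifying the one-dimensional polynomial Littlewood-Paley theorem also requires care when $q$ has multiple roots, where the derivatives of the cutoffs concentrate in scale-dependent ways. Finally, the reduction to a single angular variable in Part (b) is precisely what restricts the method to the two-dimensional Fourier side, i.e.\ to surfaces in $\R^3$.
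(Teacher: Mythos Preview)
Your proposal takes a genuinely different route from the paper, and while the overall architecture is plausible, there is a concrete gap in the critical-sector step of Part~(a).

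The paper does not separate generic and critical sectors, does not use polar coordinates, and does not invoke Rubio de Francia or radial Littlewood--Paley. It proves (a) directly by verifying the two-variable Marcinkiewicz multiplier conditions for the randomized symbol $m(\xi,\mu)=\Phi_s(p(\xi,\mu))$, $\Phi_s=\sum_k r_k(s)\psi_k^\lambda$. The polynomial $p$ is factored over $\R$ and over complex-conjugate pairs; a smooth partition of unity in the angular variable $\mu/\xi$ is introduced, adapted to the ordered real parts $b_1<\cdots<b_{d_3}$ of the roots; on the piece near $b_k$ one applies the shear $(\xi,\mu)\mapsto(\xi,\mu+b_k\xi)$, after which the elementary inequality $|\mu/\xi+(a_{i_0}-a_i)|\gtrsim\max(|a_{i_0}-a_i|,|\mu/\xi|)$ (valid on the support of that piece) yields $|\partial_\xi q|\leq C|q|/|\xi|$, $|\partial_\mu q|\leq C|q|/|\mu|$, and the mixed bound directly. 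This treats all roots, including multiple ones, uniformly. Part~(b) then follows from the identical computation, reversing the logical order you proposed.

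The gap in your argument is the monomial step. You claim that $\psi_k^\lambda(\xi^m\mu^{d-m})$ ``reduces via a linear change of coordinates to standard one-dimensional dyadic Littlewood--Paley combined with Fubini in the transverse direction.'' The linear change you describe lives in $(\log|\xi|,\log|\mu|)$, not in the Fourier variables $(\xi,\mu)$; no linear map on the Fourier side sends the hyperbolic strips $\{|\xi|^m|\mu|^{d-m}\sim\lambda^{-k}\}$ to half-planes, and an $L^p$ Fubini statement in logarithmic coordinates is not a statement about Fourier multipliers. What actually works here is precisely the Marcinkiewicz verification the paper carries out: one checks by hand that $|\xi\,\partial_\xi m|+|\mu\,\partial_\mu m|+|\xi\mu\,\partial^2_{\xi\mu}m|\leq C$ for $m=\Phi_s(\xi^m\mu^{d-m}\tilde r(\xi/\mu))$, using that $\tilde r$ is smooth and bounded away from zero on the sector. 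Once this is fixed your scheme goes through, but it is heavier than the paper's: the generic/critical dichotomy, the reduction to radial LP, and the appeal to Rubio de Francia are all unnecessary, and the paper's single affine-shear-plus-Marcinkiewicz argument replaces all of them.
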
 

\noindent
{\bf Remarks:} 1. Proposition \ref{prop2} is a strictly
two-dimensional result and moreover part a) 
holds only for homogeneous polynomials. This is readily seen as a consequence of results in
\cite{KT}.

2. Note that for any $f\in L^p(\R^2), \hat{g}(\xi,\mu)=\chi_{|\mu|\leq |\xi|}(\xi,\mu)\hat{f}(\xi,\mu)$ has $\|g\|_{p}\leq C\|f\|_{p}$ and \eqref{eq4b} applies to $g.$ (We use the usual notation that
$\chi_B$ is the characteristic function of the set $B.$)\medskip

To prove Theorem \ref{thm1} we first 
decompose into regions either near a zero of $K$ or away from the zeroes of
$K.$ As we will see in Lemmas \ref{lemmaZ1}, \ref{propZ2} and \ref{propZ3}, this allows us to assume either that $\frac{|K(1,\frac{\mu}{\xi})|}{\phi'(1,\frac{\mu}{\xi})^2}$ is bounded below (by an absolute constant $\epsilon>0$ depending on the polynomial $\phi$) or that the analogous property holds for a reparametrization of $\Gamma$. We use $\phi'(1,\eta) $ to denote the derivative in $\eta$ of $\phi(1,\eta).$ The parametrization invariance of the restriction property will be exploited  to deal with the latter case, once we have proved the result for the former.
Next we use annular Littlewood-Paley
theory, our polynomial Littlewood-Paley theory (Proposition \ref{prop2}), symmetry, and homogeneity to reduce the region of integration to one where
$|(\xi,\mu)|,$ $K(\xi,\mu)$ and $\phi_{\mu}(\xi,\mu)$ are essentially constant.  Then, to simplify matters even further, we show that our region of integration can be broken down into (open) intervals in $\xi$ and $\frac{\mu}{\xi}.$  The central idea in the proof is in Proposition \ref{prop2.5}, which then enables us  to complete the proof following the general strategy used in \cite{CKZ2}.  We use the fact that $\|f\|_{L^4}^2=\|f\overline{f}\|_{L^2}$ and then change variables, using 
Plancherel's theorem and Cauchy-Schwarz to pass from the estimate in Proposition \ref{prop2.5} to
the restriction result.  The assumption that $n=3$ is crucial as otherwise we will not be dealing with an $L^4$ norm.
 We will also need results from algebraic topology and real algebraic geometry, related to Bezout's theorem, that allow us to break down our regions of integration into intervals as well as to prove that we can reduce matters to a situation where the change of variables we use is effectively one-to-one. We would like to  thank Benson Farb for pointing out the useful reference \cite{H}. We would also like to thank the referee for his/her helpful comments on a previous version of this manuscript; these comments greatly helped us clarify the exposition.

\section{Proof of Theorem \ref{thm1}}\label{section2} 

We first observe that $K(1,\eta)$ is a polynomial of degree at most
$2(d-2),$ and we may assume that it is not identically zero;
 $K(1,\eta)$ therefore has at most $2(d-2)$ zeroes. Of course, if $d=2,$
$K$ will be constant and this step may be omitted. 
We now let $\omega_1,\omega_2,\dots,\omega_R$ be those zeroes of $K(1,\eta)$ for which the corresponding zeroes of $K$ on the unit circle
 $(\cos\theta_1,\sin \theta_1) ,(\cos\theta_2,\sin\theta_2),\dots, (\cos\theta_R,\sin\theta_R)$ satisfy $|\cos \theta_j|\geq |\sin\theta_j|$. So
$|\tan\theta_j|=|\omega_j|\leq 1,$ for each $j=1,\dots, R.$ 
We define, for each $j=1,\dots, R,$ 
$$\tilde{Z}_j=\{(\xi,\mu) \mid \xi\neq 0, \left|\tan^{-1}\left(\frac{\mu}{\xi}\right)-\theta_j\right|<\gamma_0\}$$
 where we choose $\gamma_0 $ so that each $\overline{\tilde{Z}_j}$ contains $(\cos \theta_j,\sin \theta_j)$ but contains no other zero of $K$ on the unit circle, and
 \begin{equation}\label{zeroes}
Z^*_j=\{(\xi,\mu)\in \tilde{Z}_j\mid |\mu|<|\xi|\}.
\end{equation}

We define also $$\tilde{Z}_0=\{(\xi,\mu)\mid \xi\neq 0,
\left|\tan^{-1}\left(\frac{\mu}{\xi}\right)-\theta_j\right|>\gamma_0,\forall
j=1,\dots , R\}$$
and 
\begin{equation}\label{zeroes2}
Z^*_0=\{(\xi,\mu)\in \tilde{Z}_0\mid |\mu|<|\xi|\}.
\end{equation}

We note that $\{(\xi,\mu)\mid |\mu|<|\xi|,\ (\xi,\mu)\not\in \bigcup_{j=0}^R Z^*_j\}$ is a set of measure zero in ${\R}^2.$

\begin{lemma}\label{lemmaZ1}
There is some $\epsilon_1>0,$ depending on $\phi,$ such that 
$$|K(1,\frac{\mu}{\xi})|\geq \epsilon_1 |\phi'(1,\frac{\mu}{\xi})|^2, \ \forall (\xi,\mu)\in Z^*_0.$$
\end{lemma}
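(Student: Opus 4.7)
Plan: The proof is a compactness argument in the single projective variable $\eta := \mu/\xi$, organized in three steps.

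First, I would reformulate the lemma in terms of $\eta$. For $(\xi,\mu)\in Z_0^*$, the ratio $\eta=\mu/\xi$ lies in the open interval $(-1,1)$ and satisfies $|\arctan\eta-\theta_j|>\gamma_0$ for every $j=1,\ldots,R$. Since $\arctan$ restricted to $[-1,1]$ is a diffeomorphism onto $[-\pi/4,\pi/4]$ with derivative in $[\tfrac12,1]$, this angular separation translates into a linear separation $|\eta-\omega_j|\geq c\gamma_0$ for some $c=c(\phi)>0$ and every $j$. Setting
$$A:=\{\eta\in[-1,1] : |\eta-\omega_j|\geq c\gamma_0,\ j=1,\dots,R\},$$
it suffices to establish $|K(1,\eta)|\geq\epsilon_1|\phi'(1,\eta)|^2$ for $\eta\in A$.

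Next, I would verify that $K(1,\cdot)$ has no zeros on $A$. Suppose $\eta_0\in[-1,1]$ satisfies $K(1,\eta_0)=0$. Setting $\theta_0=\arctan\eta_0$, the point $(\cos\theta_0,\sin\theta_0)$ lies on the unit circle, satisfies $|\cos\theta_0|\geq|\sin\theta_0|$, and by the homogeneity of $K$ (degree $2(d-2)$) is a zero of $K$; hence $\eta_0$ must appear in the enumeration $\omega_1,\ldots,\omega_R$ by the very definition of that list. Since $A$ avoids a $c\gamma_0$-neighborhood of each $\omega_j$, we conclude $K(1,\cdot)$ is nowhere zero on $A$.

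Finally, I would conclude by compactness. The set $A$ is closed and bounded, hence compact; $|K(1,\eta)|$ is continuous and strictly positive on $A$, hence attains a minimum $m>0$; and $|\phi'(1,\eta)|^2$ is continuous on $A$, hence bounded above by some finite $M$. Then $|K(1,\eta)|\geq(m/M)|\phi'(1,\eta)|^2$ at every $\eta\in A$: trivially where $\phi'(1,\eta)=0$, and by the extremal bounds where $\phi'(1,\eta)\neq 0$. Choosing $\epsilon_1:=m/M$ closes the argument. The proof is essentially routine; the only non-automatic observation is that every zero of the univariate polynomial $K(1,\eta)$ in $[-1,1]$ is accounted for by $\omega_1,\ldots,\omega_R$, which is a bookkeeping check against the setup preceding the lemma.
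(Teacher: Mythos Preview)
Your proof is correct and follows essentially the same approach as the paper: both arguments reduce to the single variable $\eta=\mu/\xi$, observe that on $Z_0^*$ the value $\eta$ stays uniformly away from every zero $\omega_j$ of $K(1,\cdot)$ in $[-1,1]$, and conclude that $|K(1,\eta)|$ is bounded below while $|\phi'(1,\eta)|$ is bounded above. The only difference is cosmetic: the paper writes out the factorization $K(1,\eta)=h(\eta)\prod_j(\eta-\omega_j)^{k_j}$ and produces the explicit constant $\epsilon_1=c_0\gamma_0^{\sum k_j}/(\sum_i|a_i|)^2$, whereas you invoke compactness of $A$ to get the minimum $m$ and maximum $M$ abstractly. (One pedantic remark: if $M=0$ then $m/M$ is undefined, but in that case the inequality is vacuous and any $\epsilon_1>0$ works; you may want to dispose of this degenerate case separately.)
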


\begin{proof}
If $(\xi,\mu)\in Z^*_0$ then for each $j=1,\dots, R$ we have
$|\tan^{-1}(\frac{\mu}{\xi})-\theta_j|>\gamma_0,$ where 
$\gamma_0$ is as defined before \eqref{zeroes}. Since $\frac12 \leq \frac{\tan^{-1}(\eta)-\theta_j}{\eta-\tan(\theta_j)}\leq 1$ for $|\eta|\leq 1,$ we have also
$|\frac{\mu}{\xi}-\tan(\theta_j)|> \gamma_0.$ Thus for $(\xi,\mu)\in Z^*_0$ we have that $\frac{\mu}{\xi}$ stays
away from the zeroes of $K(1,\frac{\mu}{\xi}).$ More explicitly, if 
$$K(1,\eta)=h(\eta) (\eta-\omega_1)^{k_1}\cdots (\eta-\omega_R)^{k_R},$$
where $h(\eta)\neq 0$ for $|\eta|\leq 1$ and
$k_1+k_2+\cdots + k_R\leq 2(d-2)$ (since $K$ has degree at most $2(d-2)$), then we have the bound
$$|K(1,\frac{\mu}{\xi})|\geq c_0\gamma_0^{k_1+k_2+\cdots + k_R},$$
with $c_0=\inf_{|\eta|\leq 1} |h(\eta)|.$
 Since $|\phi'(1,\frac{\mu}{\xi})|$ is bounded above for $|\mu|<|\xi|$
 (by $\sum_{i=1}^d|a_i|$, since
 $\phi(\xi,\mu)=\sum_{i=0}^d a_i\xi^i\mu^{d-i}$), this 
suffices to prove the result, with
 $\epsilon_1=$ \newline $ \frac{c_0\gamma_0^{k_1+k_2+\cdots + k_R}}{\left(\sum_{i=1}^d |a_i|\right)^2}.$
\end{proof}

\begin{lemma}\label{propZ2} 
Suppose that  
$\phi'(1,\omega_j)=0$ for some $j=1,\dots, R.$ 
Then 
there is some $\epsilon_2>0,$ depending on $\phi,$ such that 
$$|K(1,\frac{\mu}{\xi})|\geq \epsilon_2 |\phi'(1,\frac{\mu}{\xi})|^2, \ \forall (\xi,\mu)\in Z^*_j.$$ 
\end{lemma}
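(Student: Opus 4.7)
The plan is to reduce the inequality to a one-variable statement about $f(\eta):=\phi(1,\eta)$ by exploiting homogeneity. Applying Euler's identity to $\phi_\xi$ and to $\phi_\mu$ (each of which is homogeneous of degree $d-1$), I can express $\phi_{\xi\mu}(1,\eta)$ and $\phi_{\xi\xi}(1,\eta)$ entirely in terms of $f,f',f''$; after substitution into $K=\phi_{\xi\xi}\phi_{\mu\mu}-\phi_{\xi\mu}^2$ and simplification, the cross terms cancel and one obtains the clean identity
\[
K(1,\eta) \;=\; (d-1)\bigl[\,d\,f(\eta)f''(\eta) - (d-1)(f'(\eta))^2\,\bigr].
\]
Since $f'(\omega_j)=\phi'(1,\omega_j)=0$ by hypothesis, and $K(1,\omega_j)=0$ because $\omega_j$ is a zero of $K(1,\cdot)$, this identity forces $f(\omega_j)f''(\omega_j)=0$, which naturally splits the argument into two cases.

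\textbf{Case A: $f(\omega_j)=0$.} Then $\omega_j$ is a common zero of $f$ and $f'$, so the multiplicity $k$ of $\omega_j$ as a zero of $f$ satisfies $k\geq 2$. Writing $f(\eta)=(\eta-\omega_j)^k h(\eta)$ with $h(\omega_j)\neq 0$, a direct computation of the leading terms of $ff''$ and $(f')^2$ shows that the leading coefficient of $K(1,\eta)$ at $\omega_j$ is $(d-1)k(k-d)h(\omega_j)^2$. One must rule out $k=d$: if $k=d$, homogeneity gives $\phi(\xi,\mu)=c(\mu-\omega_j\xi)^d$, and a direct check shows $K\equiv 0$, contradicting the standing assumption that $K(1,\cdot)\not\equiv 0$. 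Hence $K(1,\eta)$ vanishes to order exactly $2k-2$ at $\omega_j$, which is the same order as $(f')^2$. After possibly shrinking $\gamma_0$ so that $|\eta-\omega_j|$ stays in a neighborhood where the leading-term estimate is valid, one concludes $|K(1,\eta)|\geq c|\eta-\omega_j|^{2k-2}$ and $|f'(\eta)|^2\leq C|\eta-\omega_j|^{2k-2}$ on $Z^*_j$, giving the required inequality.

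\textbf{Case B: $f(\omega_j)\neq 0$.} Then $f''(\omega_j)=0$. Let $m\geq 1$ be the order of vanishing of $f'$ at $\omega_j$, so $f''$ vanishes to order exactly $m-1$ there. In the identity above, $(f')^2$ vanishes to order $2m$, while $d\,f\,f''$ vanishes to order $m-1$ with nonzero leading coefficient $d(d-1)f(\omega_j)\cdot(\text{leading coeff of }f'')$; since $2m>m-1$, the two terms do not cancel and $K(1,\eta)$ vanishes to order exactly $m-1$. Because $|\eta-\omega_j|$ is bounded on $Z^*_j$, one may write $|f'(\eta)|^2\leq C|\eta-\omega_j|^{2m}=C|\eta-\omega_j|^{m+1}\cdot|\eta-\omega_j|^{m-1}\leq C'|K(1,\eta)|$.

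I expect the main obstacle to lie in Case A, where the leading orders of $ff''$ and $(f')^2$ coincide and one must verify that the leading coefficient $k(k-d)h(\omega_j)^2$ is nonzero; the exclusion of the degenerate case $k=d$ relies essentially on the global hypothesis $K\not\equiv 0$, together with homogeneity. Apart from this, the argument is a routine Taylor expansion once the key identity relating $K(1,\eta)$ to $f$ and its derivatives is established.
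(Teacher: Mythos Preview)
Your proof is correct and follows essentially the same route as the paper: the same identity $K(1,\eta)=(d-1)[d\,f f''-(d-1)(f')^2]$, the same case split according to whether $f(\omega_j)=\phi(1,\omega_j)$ vanishes, the same Taylor-expansion comparison of orders, and the same exclusion of the degenerate situation (your $k=d$, the paper's $m=d-1$) via $K\equiv 0$. The only point the paper treats more explicitly is the region $\epsilon'\le|\eta-\omega_j|<2\gamma_0$ where the leading-term estimate may fail; rather than ``shrinking $\gamma_0$'' it invokes the argument of Lemma~\ref{lemmaZ1} there, which you should also do since $\gamma_0$ has already been fixed.
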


 
\begin{proof}
We consider first the case where $\phi(1,\omega_j)\neq 0.$ We let $1\leq m\leq d-1$ be the first integer such that $\phi^{(m+1)}(1,\omega_j)\neq 0$ and let $c=\frac{\phi^{(m+1)}(1,\omega_j)}{m!}.$ (We may assume that such an $m$ exists since otherwise $\phi'(1,\eta)\equiv 0,$ in which case the estimate is trivially true.) Then we have
\begin{eqnarray*}
\phi'(1,\eta) & = & c(\eta-\omega_j)^m + O(|\eta-\omega_j|^{m+1})\\
\phi''(1,\eta)& = & cm(\eta-\omega_j)^{m-1}+O(|\eta-\omega_j|^m)\\
\text{and}\quad \phi(1,\eta) & = & \phi(1,\omega_j) + \frac{c}{m+1}(\eta-\omega_j)^{m+1}+O(|\eta-\omega_j|^{m+2}).
\end{eqnarray*}
A straightforward calculation shows that 
$$K(1,\eta)  = d(d-1)\left\{ \phi(1,\eta)\phi''(1,\eta)-\frac{d-1}{d}\phi'(1,\eta)^2\right\}$$
and hence
$$K(1,\eta)=d(d-1) cm\phi(1,\omega_j)(\eta-\omega_j)^{m-1}+O(|\eta-\omega_j|^m).$$
Then for $|\eta-\omega_j|<\epsilon',$ $\epsilon'$ sufficiently small, we have
$$|K(1,\eta)|\geq d(d-1)\frac{cm}{2}|\phi(1,\omega_j)||\eta-\omega_j|^{m-1}\geq c^2|\eta-\omega_j|^{2m}\geq \frac{1}{10}\phi'(1,\eta)^2.$$

Now, for $(\xi,\mu)\in Z^*_j,$ $\eta=\frac{\mu}{\xi},$ we have $|\tan^{-1}\eta-\theta_j|<\gamma_0$ and hence $|\eta-\omega_j|<2\gamma_0,$ and then
if $2\gamma_0\leq \epsilon'$ we are done. If $\epsilon'<2\gamma_0 $
then we use the argument in Lemma \ref{lemmaZ1} to show that we obtain the desired bound on $\epsilon'\leq |\eta-\omega_j|<2\gamma_0.$ More precisely, we have
$$|K(1,\eta)|\geq c_0\gamma_0^{k_1+\cdots+k_{j-1}+k_{j+1}+\cdots + k_R} (\epsilon')^{k_j},\ \forall \eta \text{ such that } \epsilon'\leq |\eta-\omega_j|<2\gamma_0.$$
Hence we have 
$$|K(1,\frac{\mu}{\xi})|\geq \min (\frac{1}{10},
\frac{c_0\gamma_0^{k_1+\cdots k_{j-1}+k_{j+1}+\cdots + k_R}(\epsilon')^{k_j}}{\left(\sum_i |a_i|\right)^2}) \phi'(1,\frac{\mu}{\xi})^2,\ \forall (\xi,\mu) \in Z^*_j.$$
This completes the proof in this case, again using the fact that $|\phi'(1,\eta)|$ is bounded above for $|\eta|\leq 1.$

We now consider the case where $\phi(1,\omega_j)=0.$ A calculation similar to that in the first case gives us
\begin{eqnarray*}
\frac{1}{d(d-1)}|K(1,\eta)| & = & |c^2\left(\frac{m}{m+1}-\frac{d-1}{d}\right)(\eta-\omega_j)^{2m}|+O(|\eta-\omega_j|^{2m+1})\\ 
& \geq & \frac{c^2}{2}\left|\frac{m}{m+1}-\frac{d-1}{d}\right| |\eta-\omega_j|^{2m}\\
& \geq & \frac{1}{4}\left|\frac{m}{m+1}-\frac{d-1}{d}\right|\phi'(1,\eta)^2,
\end{eqnarray*}
for $|\eta-\omega_j|<\epsilon'',$ with $\epsilon''$ sufficiently small. 

Now if $1\leq m < d-1$ we have
$$\left|\frac{m}{m+1}-\frac{d-1}{d}\right|=\frac{|(d-1)-m|}{|(m+1)d|}\geq \frac{1}{d^2}$$
and hence $K(1,\eta)\geq \frac{1}{4d^2}\phi'(1,\eta)^2,$ for $|\eta-\omega_j|$ sufficiently small.
As in the previous case, we  need to consider whether $\epsilon''<2\gamma_0$ or $2\gamma_0\geq \epsilon''$; this is dealt with exactly as in the previous case.

If $m=d-1$ then $\phi'(1,\eta)=c(\eta-\omega_j)^{d-1},$
$\phi''(1,\eta)=c(d-1)(\eta-\omega_j)^{d-2}$ and
$\phi(1,\eta)=\frac{c}{d}(\eta-\omega_j)^d.$ This gives
$K(1,\eta)=0,\forall \eta,$  because of the polyomial nature of $K$, 
and hence $K\equiv 0$, by homogeneity. So this case does not occur. \end{proof}
 
\begin{lemma}\label{propZ3}
Suppose 
$\phi'(1,\omega_j)\neq 0.$ 
Then there is an affine transformation $T_j:{\mathbb R}^2\rightarrow {\mathbb R}^2$ such that if 
$$(\xi^0,\mu^0) := T_j(1,\omega_j) $$
and $\phi_j$ is defined by 
$$\phi_j(\xi,\mu)=\phi\circ T_j^{-1}(\xi,\mu),$$ 
then

(i) $\xi^0\neq 0$ and $\left|\frac{\mu^0}{\xi^0}\right|<\frac12$

(ii) $ c_j K_{\phi_j}(T_j(\xi,\mu))=K_{\phi}(\xi,\mu),$ with $c_j \neq 0$; in particular  $ K_{\phi_j}(1,\eta^0)=0,$
where $\eta^0=\frac{\mu^0}{\xi^0}$

(iii) $ \phi'_j(1,\eta^0)=0$
 
 and
 
 (iv) for all  $0<\epsilon_j<\gamma_0$ sufficiently small,  if
 $$Z^*_{j,\epsilon_j}=\{(\xi,\mu)\mid |\mu|<|\xi|,\left|\tan^{-1}\left(\frac{\mu}{\xi}\right)-\theta_j\right|<\epsilon_j\},$$
 then there is some $\tilde{\epsilon}_j>0$ with $\tilde{\epsilon}_j\rightarrow 0$ as $\epsilon_j\rightarrow 0,$ such that 
 $$T_j(Z^*_{j,\epsilon_j})\subset \{(\xi,\mu)\mid |\mu|<|\xi|,\left|\tan^{-1}\left(\frac{\mu}{\xi}\right)-\tan^{-1}\eta^0\right|<\tilde{\epsilon}_j\},$$ and  $K_{\phi_j}(1,\eta)$ has no zero other than $\eta^0$ in the closure of
 $$\{(\xi,\mu)\mid |\mu|<|\xi|,\left|\tan^{-1}\left(\frac{\mu}{\xi}\right)-\tan^{-1}\eta^0\right|<\tilde{\epsilon}_j\}.$$
 
\end{lemma}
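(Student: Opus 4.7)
The plan is to construct $T_j$ as an explicit linear map that aligns, in the new coordinates, the tangent line of the level set $\{\phi=\phi(1,\omega_j)\}$ at $(1,\omega_j)$ with the $\mu$-axis; this alignment is precisely what will force $\phi'_j(1,\eta^0)=0$. Write $(a,b):=\nabla\phi(1,\omega_j)$, so $b=\phi'(1,\omega_j)\neq 0$ by hypothesis. The first step, which I regard as the essential content of the lemma, is to verify that $\phi(1,\omega_j)\neq 0$. Using the identity
\[
K(1,\eta)=d(d-1)\left\{\phi(1,\eta)\phi''(1,\eta)-\tfrac{d-1}{d}\phi'(1,\eta)^2\right\}
\]
from the proof of Lemma \ref{propZ2}, the hypothesis $K(1,\omega_j)=0$ combined with $b\neq 0$ gives $\phi(1,\omega_j)\phi''(1,\omega_j)=\tfrac{d-1}{d}b^2\neq 0$, so $\phi(1,\omega_j)\neq 0$. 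Euler's identity $a+b\omega_j=d\phi(1,\omega_j)$ then shows that the vectors $(1,\omega_j)$ and $(b,-a)$ are linearly independent in $\R^2$.

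I would then define $T_j$ to be the unique linear bijection of $\R^2$ with $T_j(1,\omega_j)=(1,0)$ and $T_j(b,-a)=(0,1)$, and set $\phi_j:=\phi\circ T_j^{-1}$. With this choice $(\xi^0,\mu^0)=(1,0)$ and $\eta^0=0$, so (i) is immediate. Part (ii) follows from the transformation rule $\text{Hess}(\phi_j)(T_jx)=(T_j^{-1})^T\,\text{Hess}(\phi)(x)\,T_j^{-1}$, which gives $c_j=(\det T_j)^2\neq 0$; evaluating at $x=(1,\omega_j)$ and noting $T_j(1,\omega_j)=(1,\eta^0)$ yields $K_{\phi_j}(1,\eta^0)=0$. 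Part (iii) is a one-line chain-rule check: since $T_j^{-1}(0,1)=(b,-a)$ and $T_j^{-1}(1,0)=(1,\omega_j)$,
\[
\partial_\mu\phi_j(1,0)=\nabla\phi(1,\omega_j)\cdot T_j^{-1}(0,1)=(a,b)\cdot(b,-a)=0,
\]
which is exactly $\phi'_j(1,\eta^0)=0$.

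For (iv) I would argue by continuity and finiteness. Since $T_j$ is a linear bijection, the induced map $v\mapsto T_jv/|T_jv|$ on the unit circle is continuous, so the wedge $Z^*_{j,\epsilon_j}$ of angular width $O(\epsilon_j)$ around $(\cos\theta_j,\sin\theta_j)$ maps into a wedge of angular width $\tilde\epsilon_j\to 0$ around $(1,0)$; for $\tilde\epsilon_j$ small the image automatically lies in $\{|\mu|<|\xi|\}$ because $\eta^0=0$. Next, $K_{\phi_j}$ is a nonzero polynomial homogeneous of degree $2(d-2)$ (it is a nonzero scalar multiple of $K_\phi\circ T_j^{-1}$), so $K_{\phi_j}(1,\cdot)$ is a nonzero one-variable polynomial and $\eta^0$ is one of its finitely many, isolated zeros; hence further shrinking $\tilde\epsilon_j$ rules out any other zero of $K_{\phi_j}(1,\cdot)$ in the closure of the image wedge. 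The main obstacle, to the extent there is one, is the opening step $\phi(1,\omega_j)\neq 0$, on which the whole construction rests; everything after that is routine linear algebra and continuity.
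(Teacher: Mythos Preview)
Your proof is correct and follows the same essential idea as the paper: both constructions hinge on the observation that $\phi(1,\omega_j)\neq 0$ (forced by $K(1,\omega_j)=0$ and $\phi'(1,\omega_j)\neq 0$), and both choose $T_j$ so that $T_j^{-1}$ carries the $\mu$-direction to a vector orthogonal to $\nabla\phi(1,\omega_j)$, which is exactly what makes $\phi_j'(1,\eta^0)$ vanish.

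The only difference is cosmetic. The paper builds $T_j$ as a rotation to the orthonormal frame $\{u_1,u_0\}$ (with $u_1$ parallel to $\nabla\phi(1,\omega_j)$) followed by a scaling $\gamma_j$ in the second coordinate, and then chooses $\gamma_j$ to force $|\eta^0|<\tfrac12$. You instead specify $T_j$ directly by $T_j(1,\omega_j)=(1,0)$ and $T_j(b,-a)=(0,1)$, which lands you at $\eta^0=0$ from the outset and makes (i) automatic. Your formulation is slightly more economical; the paper's is perhaps more explicitly orthogonal-geometric. Parts (ii)--(iv) are handled identically in both, via the Hessian transformation rule and straightforward continuity of the induced map on rays.
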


\begin{proof}
We will choose unit vectors $u_0$ and $u_1$ such that $u_0\cdot u_1=0,$ and\newline 
$\nabla\phi(1,\omega_j)\cdot u_0 =0.$ Indeed, since $$K(1,\eta)=d(d-1)\left(\phi(1,\eta)\phi''(1,\eta)-\frac{d-1}{d}\phi'(1,\eta)^2\right),$$
and since
$K(1,\omega_j)=0,\ \phi'(1,\omega_j)\neq 0\Longrightarrow \phi(1,\omega_j)\neq 0.$
We also have $\nabla\phi(1,\omega_j)=(d\phi(1,\omega_j)-\omega_j\phi'(1,\omega_j),\phi'(1,\omega_j))\neq (0,0)$ and so we can define
 $$u_1= \frac{\nabla\phi(1,\omega_j)}{\|\nabla\phi(1,\omega_j)\|}=\frac{(d\phi(1,\omega_j)-\omega_j\phi'(1,\omega_j),\phi'(1,\omega_j))}{\|(d\phi(1,\omega_j)-\omega_j\phi'(1,\omega_j),\phi'(1,\omega_j))\|}$$
and 
$$u_0=\frac{(-\phi'(1,\omega_j),d\phi(1,\omega_j)-\omega_j\phi'(1,\omega_j))}{\|
(-\phi'(1,\omega_j),d\phi(1,\omega_j)-\omega_j\phi'(1,\omega_j))\|}.$$

We also note that 
\begin{equation}\label{star2}
(1,\omega_j)\cdot u_1=\frac{d\phi(1,\omega_j)}{\|(d\phi(1,\omega_j)-\omega_j\phi'(1,\omega_j),\phi'(1,\omega_j))\|}\neq 0.
\end{equation}

We now define $T_j$ by
$$T_j(\xi,\mu)=( (\xi,\mu)\cdot u_1, \frac{1}{\gamma_j}(\xi,\mu)\cdot u_0),$$
where $\gamma_j$ is to be chosen, $\gamma_j\neq 0.$ 
Clearly $T_j$ (a composition of a rotation with a scaling in the second variable) is an affine transformation. Moreover
$$T_j(1,\omega_j)=((1,\omega_j)\cdot u_1, \frac{1}{\gamma_j}(1,\omega_j)\cdot u_0)=(\xi^0,\mu^0)$$
where $\xi^0= (1,\omega_j)\cdot u_1\neq 0,$ by \eqref{star2}. 
Also,
$$\frac{\mu^0}{\xi^0}=\frac{1}{\gamma_j}\frac{[d\omega_j\phi(1,\omega_j)-(1+\omega_j^2)\phi'(1,\omega_j)]}{d\phi(1,\omega_j)}.$$
Since $|\omega_j|\leq 1$ and $\phi(1,\omega_j)\neq 0,$ we can choose $\gamma_j$ depending only on
$\phi,j$ so that $\left|\frac{\mu^0}{\xi^0}\right|<\frac{1}{2}.$ 
Thus we have shown (i).

Now $\phi_j(\xi,\mu)=\phi(T_j^{-1}(\xi,\mu))$ immediately gives
$$\text{Hess}\; \phi_j(\xi,\mu)=(T_j^{-1})^t\; \text{Hess}\;
\phi(T_j^{-1}(\xi,\mu))\; T_j^{-1}$$ and so
$c_jK_{\phi_j}(T_j(\xi,\mu))=K_{\phi}(\xi,\mu),$ where $c_j=(\det T_j)^2\neq 0.$ Moreover
\begin{eqnarray*}
c_j K_{\phi_j}(1,\eta^0)=K_{\phi}(T_j^{-1}(1,\eta^0)) & = & 
K_{\phi}\left(\frac{1}{\xi^0}T_j^{-1}(\xi^0,\mu^0)\right)\\
& = & \frac{1}{{(\xi^0)}^{2(d-2)}}K_{\phi}(1,\omega_j)=0,
\end{eqnarray*}
which gives (ii).

Note that $T_j(\xi u_1+\gamma_j\mu u_0)=\xi T_j(u_1)+\gamma_j \mu T_j(u_0)=\xi (1,0)+\gamma_j\mu (0,\frac{1}{\gamma_j})=(\xi,\mu)$ and hence
$$T_j^{-1}(\xi,\mu)=\xi u_1+\gamma_j \mu u_0.$$
Then  $\phi_j(\xi,\mu)=\phi(\xi u_1+\gamma_j \mu u_0)$. Also, as
$\eta^0=\frac{\mu^0}{\xi^0},$ we have
\begin{eqnarray*}
u_1+\gamma_j\eta^0 u_0 & = & u_1 + \gamma_j\left( \frac{\frac{1}{\gamma_j}(1,\omega_j)\cdot u_0}{(1,\omega_j)\cdot u_1}\right) u_0\\
& = & u_1 + \left(\frac{(1,\omega_j)\cdot u_0}{(1,\omega_j)\cdot u_1} \right) u_0\\
& = & \frac{1}{(1,\omega_j)\cdot u_1}\{ [(1,\omega_j)\cdot u_1]u_1+[(1,\omega_j)\cdot u_0] u_0\}\\
& = & \frac{1}{\xi^0}(1,\omega_j).
\end{eqnarray*}
Hence
\begin{eqnarray*}
\phi_j'(1,\eta^0) & = & \gamma_j u_0\cdot \nabla \phi(u_1+\gamma_j \eta^0 u_0)\\
& = & \gamma_j u_0\cdot \nabla\phi (\frac{1}{\xi^0}(1,\omega_j))\\
& = & \frac{\gamma_j}{(\xi^0)^{d-1}} u_0\cdot\nabla \phi(1,\omega_j)\\
& = & 0,
\end{eqnarray*}
which gives (iii).

For (iv), we first have that if $(\xi,\mu)\in Z^*_{j,\epsilon_j}$ and $\eta=\frac{\mu}{\xi}$, then
$|\eta-\omega_j|<2\epsilon_j.$   Also, we recall that $(1,\omega_j)\cdot u_1\neq 0$ and
$|(1,\eta)\cdot u_1-(1,\omega_j)\cdot u_1|\leq |\eta-\omega_j|,$ from which it follows that, for $\epsilon_j$ sufficiently small we have $|(1,\eta)\cdot u_1|\geq \frac12 |(1,\omega_j)\cdot u_1|.$ 
Then, for $\epsilon_j$ small,
\begin{eqnarray*}
& &\left|\frac{(\xi,\mu)\cdot u_0}{\gamma_j(\xi,\mu)\cdot u_1} -\frac{(1,\omega_j)\cdot u_0}{\gamma_j (1,\omega_j)\cdot u_1}\right| \\
& = & \left|\frac{[(1,\eta)\cdot u_0][ (1,\omega_j)\cdot
      u_1]-[(1,\omega_j)\cdot u_0][ (1,\eta)\cdot u_1]}{\gamma_j
    [(1,\eta)\cdot u_1][ (1,\omega_j)\cdot u_1]}\right|\\
& = & \frac{|\eta - \omega_j|}{|\gamma_j [(1,\eta)\cdot u_1][ (1,\omega_j)\cdot u_1]|}\\
& \leq & \frac{2|d^2\phi(1,\omega_j)^2-2d\omega_j\phi(1,\omega_j)\phi'(1,\omega_j)+(1+\omega_j^2)\phi'(1,\omega_j)^2|}{{|\gamma_j||((1,\omega_j)\cdot u_1)|^2}\|\nabla\phi(1, \omega_j)\|^2} |\eta-\omega_j|\\
& \leq & \frac{2|d^2\phi(1,\omega_j)^2-2d\omega_j\phi(1,\omega_j)\phi'(1,\omega_j)+(1+\omega_j^2)\phi'(1,\omega_j)^2|}{|\gamma_j| d^2\phi(1,\omega_j)^2}|\eta-\omega_j|,
\end{eqnarray*}
 which is small when $\epsilon_j$ is small. Thus, for $\epsilon_j$ suffficiently small we can find
 $\tilde{\epsilon}_j$ with $\tilde{\epsilon}_j \rightarrow 0$ as $\epsilon_j \rightarrow 0$ and
  $$\left|\tan^{-1}\left[\frac{(\xi,\mu)\cdot u_0}{\gamma_j(\xi,\mu)\cdot u_1}\right]-\tan^{-1}(\eta^0)\right|<\tilde{\epsilon}_j.$$
 Moreover, since $|\eta^0|\leq \frac12,$ for $\epsilon_j$ sufficiently small we have
 $\left|\frac{(\xi,\mu)\cdot u_0}{\gamma_j(\xi,\mu)\cdot u_1}\right|<1,$ which completes the proof of the inclusion in (iv). Finally, we have shown that $\frac{\mu^0}{\xi^0}$ is a zero of $K_{\phi_j}(1,\eta),$ and we know that there are at most $2(d-2)$ such zeroes. Also, $|\eta^0|\leq \frac12,$ and so it follows that for $\tilde{\epsilon}_j$ sufficiently small there are no other zeroes of $K_{\phi_j}(1,\frac{\mu}{\xi}) $ in the closure of 
 $\{(\xi,\mu)\mid |\mu|<|\xi|, |\tan^{-1}\left(\frac{\mu}{\xi}\right)-\tan^{-1}(\eta^0)|<\tilde{\epsilon}_j\}.$
 
 \end{proof}

\begin{lemma}\label{lemma2.3'}
Suppose that 
$\phi'(1,\omega_j)\neq 0.$  Then there is some $\epsilon_3>0,$ depending on $\phi,$ such that 
$$|K(1,\frac{\mu}{\xi})|\geq \epsilon_3 |\phi'(1,\frac{\mu}{\xi})|^2,\ \forall (\xi,\mu)\in  Z_j^*\setminus Z^*_{j,\epsilon_j}.$$
\end{lemma}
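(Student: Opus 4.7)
The strategy is to mimic the argument of Lemma \ref{lemmaZ1} on the annular region $Z^*_j\setminus Z^*_{j,\epsilon_j}$, where $\eta=\mu/\xi$ is now kept away from every zero of $K(1,\cdot)$ in $[-1,1]$. The reason the argument of Lemma \ref{lemmaZ1} applied only on $Z_0^*$ was that we needed $\eta$ bounded away from \emph{all} of the $\omega_i$; excising the slab $Z^*_{j,\epsilon_j}$ from $Z_j^*$ restores exactly this property at $\omega_j$, and the choice of $\gamma_0$ already provides it at the other $\omega_i$.

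First I would record two geometric facts. (a) If $(\xi,\mu)\in Z^*_j\setminus Z^*_{j,\epsilon_j}$ and $\eta=\mu/\xi$, then $|\eta|<1$ and $\epsilon_j\leq |\tan^{-1}\eta-\theta_j|<\gamma_0$; using the comparability $\tfrac12\leq (\tan^{-1}\eta-\theta_j)/(\eta-\omega_j)\leq 1$ for $|\eta|\leq 1$ (exactly as in the proof of Lemma \ref{lemmaZ1}), this yields $|\eta-\omega_j|\geq \epsilon_j$. (b) For each $i\neq j$, the choice of $\gamma_0$ guarantees $(\cos\theta_i,\sin\theta_i)\notin\overline{\tilde Z_j}$, so $|\theta_i-\theta_j|>\gamma_0$. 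Since there are only finitely many indices, there exists $\delta>0$ with $|\theta_i-\theta_j|\geq \gamma_0+\delta$ for all $i\neq j$; hence for any $\eta$ with $|\tan^{-1}\eta-\theta_j|<\gamma_0$,
\[
|\tan^{-1}\eta-\theta_i|\geq |\theta_i-\theta_j|-|\tan^{-1}\eta-\theta_j|>\delta,
\]
and comparing $\tan^{-1}$ with identity once more gives $|\eta-\omega_i|\geq \delta$ for all $i\neq j$ (where, as in Lemma \ref{lemmaZ1}, the $\omega_i$ with $|\omega_i|>1$ contribute to the nonvanishing factor $h$ and need not be treated separately).

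Next I would factor $K(1,\eta)=h(\eta)(\eta-\omega_1)^{k_1}\cdots(\eta-\omega_R)^{k_R}$, where $h(\eta)\neq 0$ for $|\eta|\leq 1$ (its real roots in $[-1,1]$ are precisely the $\omega_i$'s), and set $c_0=\inf_{|\eta|\leq 1}|h(\eta)|>0$. Combining the two estimates above,
\[
\bigl|K(1,\tfrac{\mu}{\xi})\bigr|\;\geq\; c_0\,\epsilon_j^{k_j}\prod_{i\neq j}\delta^{k_i}
\]
for every $(\xi,\mu)\in Z^*_j\setminus Z^*_{j,\epsilon_j}$. Since $|\phi'(1,\eta)|\leq \sum_{i=1}^d|a_i|$ for $|\eta|\leq 1$, the claim follows with
\[
\epsilon_3\;=\;\frac{c_0\,\epsilon_j^{k_j}\,\delta^{k_1+\cdots+k_{j-1}+k_{j+1}+\cdots+k_R}}{\bigl(\sum_{i=1}^d|a_i|\bigr)^2}.
\]

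There is no real obstacle here; the lemma is a direct adaptation of Lemma \ref{lemmaZ1}, whose only new ingredient is the separate handling of the zero $\omega_j$ (now controlled by $\epsilon_j$ rather than $\gamma_0$) versus the other $\omega_i$ (controlled by $\gamma_0$ via the defining property of $\tilde Z_j$). The only point requiring a little care is that $\epsilon_3$ will depend on $\epsilon_j$ through the factor $\epsilon_j^{k_j}$, which is fine because $\epsilon_j$ is fixed once and for all by Lemma \ref{propZ3}.
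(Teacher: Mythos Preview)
Your proposal is correct and follows exactly the approach the paper intends: the paper's proof of this lemma is the single sentence ``The proof is identical to that of Lemma~\ref{lemmaZ1},'' and what you have written is a careful unpacking of that sentence, with the only new wrinkle being that the lower bound on $|\eta-\omega_j|$ comes from $\epsilon_j$ rather than $\gamma_0$. The paper itself carries out essentially this computation inside the proof of Lemma~\ref{propZ2} (the case $\epsilon'<2\gamma_0$), obtaining the same form of bound $c_0\gamma_0^{k_1+\cdots+k_{j-1}+k_{j+1}+\cdots+k_R}(\epsilon')^{k_j}$ that you derive.
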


\begin{proof}
The proof is identical to that of Lemma \ref{lemmaZ1}.
\end{proof}

Lemma \ref{propZ3} and Lemma \ref{lemma2.3'} show that after a reparametrization we are in the same situation
as in Lemma \ref{propZ2}. By the parametrization invariance of the restriction
phenomenon, for each $j=1,\dots, R,$  we will be able to reduce
matters to the case where the hypotheses of Lemma \ref{propZ2} hold, as we will see.

We now define 
\begin{eqnarray*}
J_1 & = & \{j=1,\dots, R\mid 
\phi'(1,\omega_j)=0 
\}\cup \{0\}\\
J_2 & = & \{j=1,\dots, R\mid 
\phi'(1,\omega_j)\neq 0, 
\}.
\end{eqnarray*}

Then we define 
\begin{eqnarray*}
Z_0 & =   &  Z_0^*\cup \left(\bigcup_{j\in J_2} Z_j^*\setminus Z^*_{j,\epsilon_j}\right)\\
Z_j & = & Z_j^*, \text{ for }j\in J_1, j\neq 0\\
Z_j & = & Z^*_{j,\epsilon_j}, \text{ for }j\in J_2.
\end{eqnarray*}

Hence, by Lemmas \ref{lemmaZ1}, \ref{propZ2} and \ref{lemma2.3'}, for $j\in J_1,$ there exists some $\epsilon>0,$ such that 
\begin{equation}\label{lowerbound}
|K(1,\frac{\mu}{\xi})|\geq \epsilon |\phi'(1,\frac{\mu}{\xi})|^2,\ \forall (\xi,\mu)\in Z_j.
\end{equation}
Also, by Lemma \ref{propZ3}, for $j\in J_2,$ there is an affine transformation $T_j:\R^2\rightarrow \R^2$ such that 
$\phi_j=\phi\circ T_j^{-1}$ satisfies an analogous relationship. This relationship will be crucial later, in the proof of Proposition \ref{prop2.5}.

We define, for $j=0,\dots, R,$  
\begin{equation}\label{W}
\widehat{(W_j f)}(\xi,\mu)=\chi_{Z_j}(\xi,\mu)\hat{f}(\xi,\mu).
\end{equation}

For reasons that will be made clear later (in the proof of Lemma \ref{lemma4}) 
we define, for $\epsilon$ as given in \eqref{lowerbound}, 
\begin{eqnarray}\label{cepsilond}
c_{\epsilon,d} & = &  \frac{2^{21+4d}}{\epsilon}(d-1)^2\\
\delta & = & \min\{\frac{\epsilon}{2^{33+6d}(d-1)},\frac{d-1}{3d}\}\label{delta}\\
\lambda & = & (1+\delta)^{\frac{1}{3d}}\label{lambda}\\
\alpha & = & (1+\delta)^{\frac13}.\label{alpha}
\end{eqnarray}
We note that, since $0<\delta\leq \frac{d-1}{3d}<1,$ we have $1<\lambda<2$ and $1<\alpha<2.$ 
\bigskip

 We now continue by using annular  Littlewood-Paley theory to reduce the region of integration in \eqref{eq1}. So we recall (\cite{Stein2},\cite{Stein}) (using $\lambda>1$ as defined above in \eqref{lambda}) that if 
 $$\widehat{(S_k^\lambda f)}(\xi,\mu)=\psi^{\lambda^2}_{2k}(\xi^2+\mu^2)\hat{f}(\xi,\mu),$$ then there are absolute constants $c_3$ and $c_4$ (depending only on $\lambda$ and therefore only on $\epsilon$ and $d$)  such that 
\begin{equation}\label{eq5}
c_3\leq \frac{\left\|\left(\sum_k |S_k^\lambda f|^2\right)^\frac{1}{2}\right\|_p }{\|f\|_p}\leq c_4,
\end{equation}
for all $f\in L^p(\R^2),$ $1<p<\infty.$
A standard argument gives also absolute constants $c_5,c_6$ (depending only on $\epsilon$ and $d$) such that 
\begin{equation}\label{eq6} 
c_5\leq \frac{\left\|\left(\sum_k |S_k^\lambda S_k^\lambda f|^2\right)^{\frac12}\right\|_p}{\|f\|_p}\leq c_6,
\end{equation}
for all $f\in L^p (\R^2),$ $1<p<\infty.$ 

Now let $\alpha>1$ be as defined above in \eqref{alpha}, and define, for $\hat{f}$ supported on
$\{(\xi,\mu)\mid |\mu|\leq |\xi|\},$
\begin{eqnarray*}
\widehat{(P_k f)}(\xi,\mu) & = & \psi_k(K(1,\frac{\mu}{\xi}))\hat{f}(\xi,\mu)\\
\widehat{(Q_k^\alpha f)}(\xi,\mu) & = & \psi^\alpha_k (\phi'(1,\frac{\mu}{\xi}))\hat{f}(\xi,\mu).
\end{eqnarray*}
By Proposition \ref{prop2} part b) we have absolute constants $c_3',c_4'$ (depending only on d), $c_5', c_6'$ (depending only on d and $\epsilon$) such that 
\begin{equation}\label{eq5a}
c'_3\leq \frac{\left\|\left(\sum_k |P_k f|^2\right)^\frac{1}{2}\right\|_p }{\|f\|_p}\leq c'_4,
\end{equation}
and 
\begin{equation}\label{eq6a}
c'_5\leq \frac{\left\|\left(\sum_k |Q^\alpha_k f|^2\right)^\frac{1}{2}\right\|_p}{\|f\|_p}\leq c'_6,
\end{equation}
for all $f\in L^p (\R^2),\ 1<p<\infty.$ 

There are also absolute constants $c_3^{''}, c_4^{''}$ (depending only on d), $ c_5^{''},c_6^{''}$ (depending only on d and $\epsilon$) such that
\begin{equation}\label{eq5b}
c''_3\leq \frac{\left\|\left(\sum_{k}|P_k P_k f|^2\right)^\frac12\right\|_p}{\|f\|_p}\leq c''_4,
\end{equation}
 and 
\begin{equation}\label{eq6b}
c''_5\leq \frac{\left\|\left(\sum_k |Q^\alpha_k Q^\alpha_k f|^2\right)^{\frac12}\right\|_p}{\|f\|_p}\leq c''_6
\end{equation}
for all $f\in L^p (\R^2).$

We note that in the event that either $K(1,\eta) $ or $\phi'(1,\eta)$ is constant, the corresponding Littlewood-Paley theory is not needed and the argument simplifies.

We now define 

$$U(t)g(x,y)=\int_{ |\mu|< |\xi|} e^{it\phi(\xi,\mu)}e^{i(x\xi+y\mu)}\hat{g}(\xi,\mu)|K(\xi,\mu)|^{\frac18}d\xi d\mu.$$

Then, by duality,
$$\left(\int_{|\mu|<|\xi|}|\hat{f}(\Gamma(\xi,\mu))|^2|K(\xi,\mu)|^{\frac14}d\xi d\mu\right)^{\frac12}\leq C\|f\|_{\frac43},$$
holds for all $f\in L^{\frac43}(\R^3),$ if, and only if, 
$$\|U(t)g\|_{L^4_{xyt}}\leq C\|g\|_{L^2_{xy}}$$
holds for all $g\in L^2(\R^2).$ \medskip

\noindent
{\bf Remark:} Since there are at most $2(d-2) +1 $ values of $j,$ if we can show that 
\begin{equation}\label{eq200}
\|U(t)W_j g\|_{L^4_{xyt}}\leq C\|g\|_{L^2_{xy}}, \forall g\in L^2(\R^2),
\end{equation}
holds for all $j=1,\dots , R$ (with constant $C$ depending on $\phi$ and $j$) we will have proved Theorem \ref{thm1} with the integration on the left-hand-side restricted to $\{(\xi,\mu)\mid |\mu|<|\xi|\}.$ This suffices, since the integral over the region $\{(\xi,\mu)\mid |\xi|<|\mu|\}$ can be dealt with
in a similar way and  $\{(\xi,\mu)\mid |\mu|=|\xi|\}$ is a set of measure zero in $\R^2.$ (Here $W_j$ is as defined in \eqref{W}.) 

\begin{proposition}\label{prop1}
 
  If, for $j\in J_1$ fixed, 
\begin{equation}\label{LPreduction2}
\|U(t)S_0^\lambda Q^\alpha_l P_k W_j g\|_{L^4_{xyt}}\leq C\|g\|_{L^2_{xy}},\ \forall g\in L^2 (\R^2),
\end{equation}
 with constant $C$ depending on $\phi$ and $j,$  but independent of $k$ and $l,$ then 
 \begin{equation}\label{extforW}
 \|U(t) W_j g\|_{L^4_{xyt}}\leq C\|g\|_{L^2_{xy}},\ \forall g\in L^2(\R^2),
 \end{equation}
 with constant $C$ depending on $\phi$ and $j.$ 
\end{proposition}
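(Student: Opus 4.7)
The plan is to decompose $U(t)W_jg$ via the three Littlewood--Paley square functions \eqref{eq6}, \eqref{eq5b}, \eqref{eq6b} (radial $S^\lambda_m$, angular in $K$ via $P_k$, angular in $\phi'$ via $Q^\alpha_l$), reduce each $S^\lambda_m$ piece to the fixed $S^\lambda_0$ piece of the hypothesis via scaling (using the homogeneity of $\phi$), apply the hypothesis, and sum back up in $L^2$.

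First I would perform the Littlewood--Paley reduction. All of $S^\lambda_m,\,P_k,\,Q^\alpha_l$ are Fourier multipliers in $(\xi,\mu)$, so they commute with $U(t)$; since $W_jg$ has Fourier support in $Z_j\subset\{|\mu|<|\xi|\}$, Proposition~\ref{prop2}(b) applies to $U(t)W_jg(\cdot,t)$ for each $t$. Applying \eqref{eq6} pointwise in $t$, raising to the fourth power, integrating in $t$, expanding the square, and bounding cross terms by Cauchy--Schwarz
$$\|U(t)S^\lambda_mS^\lambda_mW_jg\cdot U(t)S^\lambda_{m'}S^\lambda_{m'}W_jg\|_{L^2_{xyt}}\le\|U(t)S^\lambda_mS^\lambda_mW_jg\|_{L^4_{xyt}}\|U(t)S^\lambda_{m'}S^\lambda_{m'}W_jg\|_{L^4_{xyt}}$$
yields $\|U(t)W_jg\|_{L^4_{xyt}}^2\lesssim\sum_m\|U(t)S^\lambda_mS^\lambda_mW_jg\|_{L^4_{xyt}}^2$; iterating the same maneuver with \eqref{eq5b} (for $P_k$) and \eqref{eq6b} (for $Q^\alpha_l$) gives
$$\|U(t)W_jg\|_{L^4_{xyt}}^2\lesssim\sum_{m,k,l}\|U(t)Q^\alpha_lQ^\alpha_lP_kP_kS^\lambda_mS^\lambda_mW_jg\|_{L^4_{xyt}}^2.$$

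Next I would remove the $m$-dependence by scaling. Because $\phi$ is homogeneous of degree $d$, the dilation $g\mapsto g_s(x,y)=s\,g(sx,sy)$ preserves both $\|g\|_{L^2_{xy}}$ and $\|U(t)g\|_{L^4_{xyt}}$, via a direct change of variables using $\phi(s\xi,s\mu)=s^d\phi(\xi,\mu)$ and $|K(s\xi,s\mu)|^{1/8}=s^{(d-2)/4}|K(\xi,\mu)|^{1/8}$. Since $P_k,\,Q^\alpha_l,\,W_j$ depend only on $\mu/\xi$ and on $\operatorname{sign}(|\xi|-|\mu|)$, they commute with this dilation; a brief Fourier-side check shows moreover that $(S^\lambda_mg)_s=S^\lambda_0g_s$ when $s=\lambda^{2m}$. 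Hence the $m$th summand above equals $\|U(t)Q^\alpha_lQ^\alpha_lP_kP_k S^\lambda_0S^\lambda_0W_jg_m\|_{L^4_{xyt}}^2$ for a dilate $g_m$ with $\|g_m\|_{L^2}=\|g\|_{L^2}$.

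Finally, setting $g'=S^\lambda_0Q^\alpha_lP_kW_jg_m$ and using $W_j^2=W_j$ together with commutativity of the Fourier multipliers gives $S^\lambda_0Q^\alpha_lP_kW_jg'=S^\lambda_0S^\lambda_0Q^\alpha_lQ^\alpha_lP_kP_kW_jg_m$, so \eqref{LPreduction2} yields
$$\|U(t)S^\lambda_0S^\lambda_0Q^\alpha_lQ^\alpha_lP_kP_kW_jg_m\|_{L^4_{xyt}}\le C\|S^\lambda_0Q^\alpha_lP_kW_jg_m\|_{L^2_{xy}}.$$
Summing squares over $m,k,l$, undoing the scaling on the right (which again preserves $L^2$), and applying the trivial $L^2$ square-function upper bounds (Plancherel together with the pointwise bounds $\sum_m|\psi^{\lambda^2}_{2m}|^2+\sum_k|\psi_k|^2+\sum_l|\psi^\alpha_l|^2\lesssim 1$) collapses the right-hand side to $\lesssim C^2\|W_jg\|_{L^2_{xy}}^2\le C^2\|g\|_{L^2_{xy}}^2$, which combined with the $L^4$ reduction above gives \eqref{extforW}. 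The main technical obstacle is the scaling step: one must check that the Jacobians of the $(x,y,t)$ change of variables and the factor $s^{(d-2)/4}$ from the homogeneity of $|K|^{1/8}$ combine to leave the $L^4_{xyt}\to L^2_{xy}$ norm exactly invariant---this dimensional consistency is precisely what underlies the choice of mitigating exponent $\tfrac14$ in Theorem~\ref{thm1} in $\mathbb{R}^3$.
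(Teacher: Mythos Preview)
Your argument is correct and follows essentially the same approach as the paper: a scaling step exploiting the homogeneity of $\phi$ to reduce $S^\lambda_m$ to $S^\lambda_0$, combined with the three Littlewood--Paley decompositions \eqref{eq6}, \eqref{eq5b}, \eqref{eq6b} and the Minkowski/Cauchy--Schwarz inequality $\|(\sum_m|f_m|^2)^{1/2}\|_4\le(\sum_m\|f_m\|_4^2)^{1/2}$, with the sum collapsed in $L^2$ at the end. The only organizational difference is that the paper performs the scaling first (upgrading \eqref{LPreduction2} directly to the general-$m$ estimate \eqref{eq**}) and then runs the Littlewood--Paley reduction, whereas you decompose first and scale each piece; also note that with the paper's convention $\psi^{\lambda^2}_{2m}(t)=\psi^{\lambda^2}(\lambda^{2m}t)$ the correct dilation parameter is $s=\lambda^{m}$ rather than $\lambda^{2m}$, but this is purely notational.
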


\noindent

\begin{proof}

We assume that \eqref{LPreduction2} holds.  We will first use homogeneity to deduce that 
\begin{equation}\label{eq**}
\|U(t)S_m^\lambda Q^\alpha_l P_k W_j g\|_{L^4_{xyt}}\leq \tilde{C}\| g\|_{L^2_{xy}},
\end{equation}
for all $g\in L^2_{xy}(\R^2),$ with  $\tilde{C}$  independent of 
$m,k,$ and $l.$ 

We have
\begin{eqnarray*}
& & U(t)S_m^\lambda Q^\alpha_l P_k W_jg(x,y)  \\
&=  & \int_{|\mu|<|\xi|}e^{it\phi(\xi,\mu)+i(x\xi +y\mu)}\psi_{2m}^{\lambda^2}(\xi^2+\mu^2)
\psi_l^{\alpha}(\phi'(1,\frac{\mu}{\xi}))\psi_k(K(1,\frac{\mu}{\xi}))\chi_{Z_j}(\xi,\mu)\cdot\\
& & \qquad\qquad \hat{g}(\xi,\mu)|K(\xi,\mu)|^{\frac18}d\xi d\mu\\
&=& \int_{|\mu|<|\xi|} e^{it\lambda^{-md}\phi(\xi,\mu)}e^{i(\lambda^{-m}x\xi+\lambda^{-m}y\mu)}\psi^{\lambda^2}(\xi^2+\mu^2)\psi^\alpha_l(\phi'(1,\frac{\mu}{\xi}))\psi_k(K(1,\frac{\mu}{\xi}))\cdot \\
& & \qquad\qquad\chi_{Z_j}(\xi,\mu) \hat{g}(\lambda^{-m}\xi,\lambda^{-m}\mu) \lambda^{\frac{-m(d-2)}{4}-2m}|K(\xi,\mu)|^{\frac18}d\xi d\mu  \\
&  =& \lambda^{-\frac{md}{4}-\frac{3m}{2}}U(\lambda^{-md} t)S_0^\lambda Q_l^\alpha P_k W_j \tilde{g}(\lambda^{-m}x,\lambda^{-m}y),
\end{eqnarray*}
where $\hat{\tilde{g}}(\xi,\mu)=\hat{g}(\lambda^{-m}\xi,\lambda^{-m}\mu).$

So 
\begin{eqnarray*}
& & \|U(t)S_m^\lambda Q_l^\alpha P_k W_j g\|_{L^4_{xyt}} \\
& = & \lambda^{-\frac{md}{4}-\frac{3m}{2}}\left(\int |U(\lambda^{-md} t)S_0^\lambda Q_l^\alpha P_k W_j \tilde{g}(\lambda^{-m}x,\lambda^{-m}y)|^4 dxdydt \right)^{\frac14}\\
& = & \lambda^{-\frac{md}{4}-\frac{3m}{2}+\frac{md}{4}+\frac{2m}{4}}\|U(t)S_0^\lambda Q_l^\alpha P_k W_j \tilde{g}\|_{L^4_{xyt}}\\
& \leq & C\lambda^{-m}\|\tilde{g}\|_{L^2_{xy}},\quad\text{by \eqref{LPreduction2}}\\
& \leq & \tilde{C}\|g\|_{L^2_{xy}},
\end{eqnarray*}
which shows \eqref{eq**}. 

We now use the Littlewood-Paley theories to pass to \eqref{extforW}. We have, using 
\eqref{eq6}, \eqref{eq5b}, \eqref{eq6b},
\begin{eqnarray*}
\|U(t)W_j g\|_{L^4_{xyt}} & \leq & \frac{1}{c_5}\left\|\left(\sum_m |S_m^\lambda S_m^\lambda U(t)W_j g|^2\right)^{\frac12}\right\|_{L^4_{xyt}}\\
& \leq & \frac{1}{c_5}\left(\sum_m\|U(t)S_m^\lambda S_m^\lambda W_jg\|_{L^4_{xyt}}^2\right)^{\frac12}\\
& \leq & \frac{1}{c_5 c''_3}\left(\sum_m\left\|\left(\sum_k |P_k P_k U(t)S_m^\lambda S_m^\lambda W_j g|^2\right)^\frac{1}{2}\right\|_{L^4_{xyt}}^2\right)^{\frac12}\\
& \leq & \frac{1}{c_5 c''_3}\left(\sum_m \sum_k \|U(t)P_k P_k S_m^\lambda S_m ^\lambda W_j g\|_{L^4_{xyt}}^2\right)^{\frac12}\\
& \leq & \frac{1}{c_5 c''_3 c''_5}\left(\sum_m \sum_k\sum_l \|U(t)Q_l^\alpha Q_l^\alpha P_k P_k S_m^\lambda S_m^\lambda W_j g\|_{L^4_{xyt}}^2\right)^{\frac12}.
\end{eqnarray*}

Next, using \eqref{eq**} followed by \eqref{eq5}, \eqref{eq5a} and \eqref{eq6a}, this gives
\begin{eqnarray*}
\|U(t)W_j g\|_{L^4_{xyt}} & \leq & \frac{\tilde{C}}{c_5 c''_3 c''_5}\left(\sum_m\sum_k\sum_l \|S_m^\lambda Q_l^\alpha P_k  g\|_{L^2_{xy}}^2\right)^{\frac12}\\
& = & \frac{\tilde{C}}{c_5 c''_3 c''_5}\left(\sum_l\sum_k\left\|\left(\sum_m |S_m^\lambda Q_l^\alpha P_k g|^2\right)^{\frac12}\right\|_{L^2_{xy}}^2\right)^{\frac12}\\
& \leq & \frac{\tilde{C} c_4}{c_5 c''_3 c''_5}\left(\sum_l\sum_k\|Q_l^\alpha P_k g\|_{L^2_{xy}}^2\right)^{\frac12}\\
& = & \frac{\tilde{C} c_4}{c_5 c''_3 c''_5}\left(\sum_l\left\|\left(\sum_k|Q_l^\alpha P_k g|^2\right)^{\frac12}\right\|_{L^2_{xy}}^2\right)^{\frac12}\\
& \leq & \frac{\tilde{C} c_4 c_4'}{c_5 c_3'' c_5''}\left(\sum_l\|Q_l^\alpha g\|_{L^2_{xy}}^2\right)^{\frac12}\\
& = & \frac{\tilde{C} c_4 c_4'}{c_5 c''_3 c''_5}\left\|\left(\sum_l |Q_l^\alpha g|^2\right)^{\frac12}\right\|_{L^2_{xy}}\\
& \leq & \frac{\tilde{C} c_4 c_4' c_6'}{c_5 c_3'' c_5''}\|g\|_{L^2_{xy}}.
\end{eqnarray*}
Thus we have shown \eqref{extforW}. 

\end{proof}

By Proposition \ref{prop1} the region of integration that we need to consider is now restricted to where $|(\xi,\mu)|$, $|K(1,\frac{\mu}{\xi})|$ , and $|\phi'(1,\frac{\mu}{\xi})|$ are essentially constant. However it simplifies calculations considerably if we can further reduce to the situation where $\xi$ and  $\frac{\mu}{\xi}$ each lie in an interval. Since $\phi$ is  a polynomial we are able to reduce to this case as follows.

For ease of notation we define
$$\Psi_{0lk}(\xi,\mu)=\psi^{\lambda^2}(\xi^2+\mu^2)\psi^{\alpha}_l (\phi'(1,\frac{\mu}{\xi}))\psi_k (K(1,\frac{\mu}{\xi})).$$
First we note that $ |\mu|< |\xi|,  \Psi_{0lk}(\xi,\mu)\neq 0\Longrightarrow $
$$\frac{1}{\sqrt{2}\lambda} < \frac{1}{\sqrt{2}}|(\xi,\mu)|<|\xi|\leq |(\xi,\mu)|< \lambda^{2}$$ 
and so 
$\xi\in (\frac{1}{\sqrt{2}\lambda},\lambda^2) \cup (- \lambda^{2}, -\frac{1}{\sqrt{2}\lambda}).$
Now $(\frac{1}{\sqrt{2}\lambda},\lambda^2)$  can be written as a finite union of intervals, $(\frac{1}{\sqrt{2}\lambda},\lambda^2)=\bigcup_{p=1}^{P} {\mathcal I}_{p}$ such that for each $p,$
$t_1,t_2\in {\mathcal I}_p\Longrightarrow \frac{1}{\lambda^3}\leq \frac{t_1}{t_2}\leq \lambda^3.$ 
This is easily seen by taking the ${\mathcal I} _p$ to be equally spaced intervals of length $\delta=\frac{1}{2} \frac{(\lambda^3-1)}{\sqrt{2}\lambda}.$ Then the number of intervals, $P,$ clearly depends only on $\lambda,$ which by definition depends only on $d$ and $\epsilon.$ (Recall \eqref{lambda},\eqref{delta}.) 
Then
$${\mathcal I}_p = \begin{cases} (\frac{1}{\sqrt{2}\lambda}+(p-1)\delta,\frac{1}{\sqrt{2}\lambda}+p\delta ] & p=1,2,\dots,P-1\\
(\frac{1}{\sqrt{2}\lambda}+(P-1)\delta,\lambda^2) & p=P.
\end{cases}$$
Thus, for $p=1,\dots, P-1,$  $t_1,t_2\in {\mathcal I}_p=(\frac{1}{\sqrt{2}\lambda}+ (p-1)\delta, \frac{1}{\sqrt{2}\lambda}+p\delta]
\Longrightarrow t_i=\frac{1}{\sqrt{2}\lambda}+(p-1+\theta_i)\delta, $ for some $\theta_i\in (0,1],\ i=1,2.$
Then
$$\frac{t_1}{t_2}  =  \frac{\frac{1}{\sqrt{2}\lambda}+(p-1)\delta+\theta_1\delta}{\frac{1}{\sqrt{2}\lambda}+(p-1)\delta+\theta_2\delta}$$
and so
\begin{eqnarray*}
\frac{1}
{1+\frac{\theta_2\delta}{\frac{1}{\sqrt{2}\lambda}+(p-1)\delta}} 
 \leq & \frac{t_1}{t_2} &
\leq 1 + \frac{\theta_1\delta}{\frac{1}{\sqrt{2}\lambda}+(p-1)\delta};\\
\text{thus}\quad \frac{1}{\lambda^3}\leq \frac{1}{1+\sqrt{2}\lambda\delta} \leq &\frac{t_1}{t_2} & \leq 1+ \sqrt{2}\lambda\delta\leq \lambda^3.
\end{eqnarray*}
The argument for ${\mathcal I}_P$ is similar.  The same argument gives a decomposition \newline $(-\lambda^2,-\frac{1}{\sqrt{2}\lambda})=\bigcup_{p=1}^{P}\tilde{\mathcal I}_p$ where $t\in \tilde{\mathcal I}_p\Longleftrightarrow -t\in {\mathcal I}_p.$

 We also have $|\mu|<|\xi|,\ \Psi_{0lk}(\xi,\mu)\neq 0\Longrightarrow  2^{-k-1} < |K(1,\frac{\mu}{\xi})|< {2^{-k+2}}$.
We now let $E_k=\{\eta \mid 2^{-k-1}< |K(1,\eta)|< 2^{-k+2} \}.$
Since $E_k$ is open and $K$ is polynomial, $E_k$  can be written as a finite union of disjoint open intervals,
$E_k=\bigcup_{m=1}^{M(k)}I_{k,m},$ where each component interval $I_{k,m}$ has endpoints
given by $|K(1,\eta)|= 2^{-k-1}$ or $|K(1,\eta)|=2^{-k+2}.$ Moreover $K(1,\eta)$ may be taken to be  single-signed on each interval. We also note that since $K(1,\cdot)$ is polynomial, $M(k)$ will be bounded independently of $k$; $M(k)\leq M(d)$ for some constant
$M$ depending only on the degree $d$ of $\phi.$

We can argue similarly for $\phi'(1,\frac{\mu}{\xi}).$ We have $|\mu|< |\xi|,\Psi_{0lk}(\xi,\mu)\neq 0\Longrightarrow $
$\alpha^{-l-1}< |\phi'(1,\frac{\mu}{\xi})|< \alpha^{-l+2}.$
Then if we let 
$F_l=\{\eta \mid \alpha^{-l-1}<|\phi'(1,\eta)|<\alpha^{-l+2}\}$
we can write $F_l$ as a finite union of disjoint intervals, $F_l=\bigcup_{n=1}^{N}I_{l,n},$ with
$N$ bounded and the bound depends only on $d.$ The $I_{l,n}$ have endpoints given by $|\phi'(1,\eta)|=\alpha^{-l-1}$ or $|\phi'(1,\eta)|=\alpha^{-l+2}$ and we may assume that $\phi'(1,\eta)$ is single-signed on each
interval. 

Thus we have
$$\Psi_{0lk}(\xi,\mu)=\Psi_{0lk}(\xi,\mu) \sum_{n=1}^N\sum_{m=1}^M \sum_{p=1}^P \chi_{I_{k,m}}(\frac{\mu}{\xi})\chi_{I_{l,n}}(\frac{\mu}{\xi})[\chi_{\overset{\:\circ}{\mathcal I}_p}(\xi)+\chi_{\overset{\:\circ}{\tilde{\mathcal I}}_p}(\xi)]$$
 almost everywhere. Here $N$ and $M $ are bounded with bound depending only on $d,$ and $P$ depends on $d$ and $\epsilon.$  
  (We note that $\{(\xi,\mu)\mid \xi\ \text{is an endpoint of }{\mathcal I}_p\cup\tilde{\mathcal I}_p\}$ is a set of measure zero in $\R^2.$)

We now define, for $m=1,\dots, M, n=1,\dots, N, p=1,\dots,P,$
\begin{eqnarray*}
R^1_{m,n,p}(k,l) & = & \{ (\xi,\mu)\mid \left|\frac{\mu}{\xi}\right|<1,\ \xi\in \overset{\:\circ}{\mathcal I}_p, \frac{\mu}{\xi}\in I_{k,m}\cap I_{l,n}\}\\
R^2_{m,n,p}(k,l) & = & \{ (\xi,\mu)\mid \left|\frac{\mu}{\xi}\right|<1,\ \xi\in \overset{\:\circ}{\tilde{\mathcal I}}_p,\frac{\mu}{\xi}\in I_{k,m}\cap I_{l,n}\}.
\end{eqnarray*}
and
\begin{eqnarray*}
S^1_{m,n,p,j}(k,l) &= & R^1_{m,n,p}(k,l)\cap Z_j,\ j=0,\dots, R\\
S^2_{m,n,p,j}(k,l) & = & R^2_{m,n,p}(k,l)\cap Z_j,\ j=0,\dots, R.
\end{eqnarray*}
We also define, for $m,n,p,j$ fixed, $s=1,2,$ $a>0,$ 
$$
A^s_{m,n,p,j}(k,l,a)=\{(\xi_1,\xi_2, \eta_1,\eta_2)\mid (\xi_1,\xi_1\eta_1), (\xi_2,\xi_2\eta_2)\in S^s_{m,n,p,j}(k,l), \eta_2>\eta_1+a\}.$$

\begin{lemma}\label{lemma2.2}
Suppose that for $m,n,p,j$ fixed, $j\in J_1,$ $s=1,2,$ $a>0,$  we have
\begin{eqnarray}
& & \bigg \|\int_{A_{m,n,p,j}^{s}(k,l,a)} e^{it(\xi_1^d\phi(1,\eta_1)+\xi_2^d\phi(1,\eta_2))}
e^{i[x(\xi_1+\xi_2)+y(\xi_1\eta_1+\xi_2\eta_2)]}|\xi_1\xi_2|^{\frac{d-2}{4}}\cdot \nonumber\\
& & \quad \left.|K(1,\eta_1)K(1,\eta_2)|^{\frac18}\Psi_{0lk}(\xi_1,\xi_1\eta_1)\Psi_{0lk}(\xi_2,\xi_2\eta_2)\hat{g}(\xi_1,\xi_1\eta_1)\hat{g}(\xi_2,\xi_2\eta_2)\right.\nonumber \\
& &  \quad \xi_1\xi_2 d\xi_1 d\xi_2 d\eta_1 d\eta_2\bigg\|_{L^2_{xyt}}\nonumber\\
& &\leq  C\|g\|_{L^2_{xy}}^2,\label{eq*3}
\end{eqnarray}
with $C$ independent of $k,l,$ and $a.$ Then
$$\|U(t)S^\lambda_0 Q^\alpha_l P_k  W_j g\|_{L^4_{xyt}}\leq C\| g\|_{L^2_{xy}},$$
with $C$ independent of $k$ and $l$.
\end{lemma}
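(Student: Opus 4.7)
The plan is to derive the $L^4$ bound from the hypothesis \eqref{eq*3} by combining the identity $\|F\|_{L^4_{xyt}}^2=\|F^2\|_{L^2_{xyt}}$ (for $F:=U(t)S^\lambda_0 Q^\alpha_l P_k W_j g$) with the decomposition of $\Psi_{0lk}$ into finitely many products of interval characteristic functions displayed immediately before the lemma. Writing $F$ explicitly as
\[F(x,y,t)=\int_{|\mu|<|\xi|}e^{it\phi(\xi,\mu)+i(x\xi+y\mu)}\Psi_{0lk}(\xi,\mu)\chi_{Z_j}(\xi,\mu)|K(\xi,\mu)|^{\frac18}\hat g(\xi,\mu)\,d\xi\,d\mu\]
and inserting that partition, $F$ splits as a finite sum $\sum F_{m,n,p,s}$ of at most $2MNP$ pieces, each supported in frequency on $S^s_{m,n,p,j}(k,l)$. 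Since $M,N,P$ depend only on $d$ and $\epsilon$, the $L^4$ triangle inequality reduces the task to bounding each $\|F_{m,n,p,s}\|_{L^4_{xyt}}$ uniformly in $k$ and $l$.

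Fix $(m,n,p,s)$ and pass to $\|F_{m,n,p,s}\|_{L^4_{xyt}}^2=\|F_{m,n,p,s}^2\|_{L^2_{xyt}}$. Expanding the square yields a four-fold integral over $(\xi_1,\mu_1,\xi_2,\mu_2)\in S^s_{m,n,p,j}(k,l)^2$; substituting $\mu_i=\xi_i\eta_i$, whose Jacobian $|\xi_1\xi_2|$ equals $\xi_1\xi_2$ since the sign of $\xi$ is constant on each $S^s_{m,n,p,j}(k,l)$, and invoking homogeneity via $\phi(\xi,\mu)=\xi^d\phi(1,\mu/\xi)$ and $|K(\xi,\mu)|^{\frac18}=|\xi|^{\frac{d-2}{4}}|K(1,\mu/\xi)|^{\frac18}$, reproduces exactly the integrand inside the $L^2_{xyt}$ norm of \eqref{eq*3}, now integrated over
\[R^s_{m,n,p,j}(k,l):=\{(\xi_1,\xi_2,\eta_1,\eta_2)\mid (\xi_i,\xi_i\eta_i)\in S^s_{m,n,p,j}(k,l),\;i=1,2\}.\]

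It remains to reconcile this full region with the strict inequality $\eta_2>\eta_1+a$ in \eqref{eq*3}. Split $R^s_{m,n,p,j}(k,l)$ into $\{\eta_1<\eta_2\}$ and $\{\eta_1>\eta_2\}$ (the diagonal has Lebesgue measure zero). By the symmetry of the integrand under $(\xi_1,\eta_1)\leftrightarrow(\xi_2,\eta_2)$ and the $L^2_{xyt}$ triangle inequality, both halves contribute equal norms, so it suffices to bound the $\eta_2>\eta_1$ piece, which equals $\bigcup_{a>0}A^s_{m,n,p,j}(k,l,a)$. As $a\to 0^+$ the inner integrals over $A^s_{m,n,p,j}(k,l,a)$ converge pointwise a.e.\ to the integral over $\{\eta_2>\eta_1\}$ (dominated convergence applied on the bounded integration region), so Fatou's lemma combined with the $a$-uniform hypothesis \eqref{eq*3} gives a bound of $2C\|g\|_{L^2_{xy}}^2$ on $\|F_{m,n,p,s}^2\|_{L^2_{xyt}}$. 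Summing over the finitely many choices of $(m,n,p,s)$ concludes the proof. The principal technical task is pure bookkeeping—tracking the homogeneity exponents and Jacobian so the integrand matches \eqref{eq*3} precisely—together with the short Fatou passage from $a>0$ to $a=0$.
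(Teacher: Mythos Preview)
Your proposal is correct and follows essentially the same approach as the paper: decompose $U(t)S^\lambda_0 Q^\alpha_l P_k W_j g$ into the finitely many pieces indexed by $(m,n,p,s)$, use $\|F\|_{L^4}^2=\|F^2\|_{L^2}$ together with the change of variables $\mu_i=\xi_i\eta_i$ to match the integrand in \eqref{eq*3}, invoke symmetry in $(\xi_1,\eta_1)\leftrightarrow(\xi_2,\eta_2)$, and pass from $a>0$ to the full region $\{\eta_2>\eta_1\}$. The only cosmetic difference is ordering (the paper first removes the $a$-constraint and symmetrizes, then squares and changes variables), and your Fatou/dominated convergence justification for the $a\to 0^+$ passage makes explicit what the paper leaves implicit.
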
 

\begin{proof}
First we note that \eqref{eq*3} implies that
\begin{eqnarray*}
& & \|
\int_{\substack{(\xi_1,\xi_1\eta_1)\in S^s_{m,n,p,j}(k,l)\\
(\xi_2,\xi_2\eta_2)\in S^s_{m,n,p,j}(k,l)\\
\eta_2\geq \eta_1}}
e^{it(\xi_1^d\phi(1,\eta_1)+\xi_2^d\phi(1,\eta_2))}
e^{i[x(\xi_1+\xi_2)+y(\xi_1\eta_1+\xi_2\eta_2)]}|\xi_1\xi_2|^{\frac{d-2}{4}}\cdot \\
& & \quad \left.|K(1,\eta_1)K(1,\eta_2)|^{\frac18}\Psi_{0lk}(\xi_1,\xi_1\eta_1)\Psi_{0lk}(\xi_2,\xi_2\eta_2)\widehat{W_j g}(\xi_1,\xi_1\eta_1)\widehat{W_j g}(\xi_2,\xi_2\eta_2)\right. \\
& &  \quad \xi_1\xi_2 d\xi_1 d\xi_2 d\eta_1 d\eta_2\|_{L^2_{xyt}}\\
& &\leq  C\|W_j g\|_{L^2_{xy}}^2\\
& & \leq C\|g\|_{L^2_{xy}},
\end{eqnarray*}
with $C$ independent of $k$ and $l.$ Then, by symmetry, we obtain
\begin{eqnarray*}
& & \|
\int_{\substack{(\xi_1,\xi_1\eta_1)\in S^s_{m,n,p,j}(k,l)\\
(\xi_2,\xi_2\eta_2)\in S^s_{m,n,p,j}(k,l)}}
e^{it(\xi_1^d\phi(1,\eta_1)+\xi_2^d\phi(1,\eta_2))}
e^{i[x(\xi_1+\xi_2)+y(\xi_1\eta_1+\xi_2\eta_2)]}|\xi_1\xi_2|^{\frac{d-2}{4}}\cdot \\
& & \quad \left.|K(1,\eta_1)K(1,\eta_2)|^{\frac18}\Psi_{0lk}(\xi_1,\xi_1\eta_1)\Psi_{0lk}(\xi_2,\xi_2\eta_2)\widehat{W_j g}(\xi_1,\xi_1\eta_1)\widehat{W_j g}(\xi_2,\xi_2\eta_2)\right. \\
& &  \quad \xi_1\xi_2 d\xi_1 d\xi_2 d\eta_1 d\eta_2\|_{L^2_{xyt}}\\
& &\leq  C\|g\|_{L^2_{xy}}^2,
\end{eqnarray*}
with $C$ independent of $k$ and $l.$ 

Then, using the fact that $\|h\|_4=\|h\overline{h}\|_2^{\frac12},$ followed by the change of variables $\mu=\xi\eta,$ we find that
\begin{eqnarray*}
& & \left\|\int_{R^s_{m,n,p}(k,l)} e^{it\phi(\xi,\mu)}e^{i(x\xi+y\mu)}|K(\xi,\mu)|^{\frac18}\Psi_{0lk}(\xi,\mu)\widehat{W_j g}(\xi,\mu)d\xi d\mu\right\|_{L^4_{xyt}}\\
& & \leq C\|g\|_{L^2_{xy}},
\end{eqnarray*}
with $C$ independent of $k $ and $l.$ 

So
\begin{eqnarray*}
& & \|U(t)S_0^{\lambda} Q^\alpha_l P_k W_j g\|_{L^4_{xyt}} \\
&=  & \left\|\int_{|\mu|<|\xi|}e^{it\phi(\xi,\mu)}e^{i(x\xi+y\mu)}|K(\xi,\mu)|^{\frac18}\Psi_{0lk}(\xi,\mu)\widehat{W_j g}(\xi,\mu)d\xi d\mu\right\|_{L^4_{xyt}} \\
& \leq  & \sum_{m,n,p} \left\|\int_{R^1_{m,n,p}(k,l)}e^{it\phi(\xi,\mu)}e^{i(x\xi+y\mu)}|K(\xi,\mu)|^{\frac18}\Psi_{0lk}(\xi,\mu)\widehat{W_j g}(\xi,\mu)d\xi d\mu\right\|_{L^4_{xyt}} + \\
&&     \sum_{m,n,p} \left\|\int_{R^2_{m,n,p}(k,l)}e^{it\phi(\xi,\mu)}e^{i(x\xi+y\mu)}|K(\xi,\mu)|^{\frac18}\Psi_{0lk}(\xi,\mu)\widehat{W_j g}(\xi,\mu)d\xi d\mu\right\|_{L^4_{xyt}}.
\end{eqnarray*}
Since the sums in $m,n$ and $p$ are finite, with the number of terms depending only on 
$d$ and $\epsilon,$ we have
$$\|U(t)S_0^{\lambda} Q^\alpha P_k W_j g\|_{L^4_{xyt}}\leq C\|g\|_{L^2_{xy}},\ \forall g\in L^2(\R^2),$$
with $C$ independent of $k$ and $l.$ 

\end{proof}

We recall that  for $ j\in J_1,$ we have \eqref{lowerbound};  we will show that in this case \eqref{eq*3} holds. For $j\in J_2,$ Lemma \ref{propZ3} applies and we will show that an estimate analogous to \eqref{eq*3} holds after we apply the appropriate affine transformation $T_j:\R^2\rightarrow \R^2$ to $\phi.$ 

We consider first $j\in J_1.$ We also recall that $K(1,\eta)$ is single-signed on each $I_{k,m}.$ We suppose first that we are considering $A_{m,n,p,j}^s(k,l,a)$ with $m$ such that $K(1,\eta)$ is positive on $I_{k,m}.$ 
Then, in order to bound the left-hand-side in \eqref{eq*3} we make the following change of variables:
 \begin{eqnarray}
 u& = & \xi_1+\xi_2\nonumber \\
 v& = & \xi_1\eta_1+\xi_2\eta_2\nonumber \\
 w & = & \xi_1^d\phi(1,\eta_1)+\xi_2^d\phi(1,\eta_2)\label{eq*10}\\
 z & = & \xi_2^d \phi(1,\eta_2)-\xi_1^d\phi(1,\eta_1)\nonumber.
 \end{eqnarray}
If $K(1,\eta)$ is negative on $I_{k,m}$ then we use instead $z=\xi_1^d\phi(1,\eta_1)-\xi_2^d\phi(1,\eta_2).$ Since the proof follows in exactly the same way we omit the details of this case.

Unfortunately it is not clear that this is a one-to-one map. However we are able to show that it is at most many-to-one on our region of integration, and this will suffice.
We define
\begin{equation}\label{eq*11}
V(\xi_1,\xi_2,\eta_1,\eta_2)=(u,v,w,z).
\end{equation}
Then 
\begin{eqnarray*}
JV(\xi_1,\xi_2,\eta_1,\eta_2) & = & \frac{\partial (u,v,w,z)}{\partial (\xi_1,\xi_2,\eta_1,\eta_2)}\\
& = & 2(\eta_2-\eta_1)(\xi_1\xi_2)^d \phi'(1,\eta_1)\phi'(1,\eta_2)\frac{G(\eta_2)-G(\eta_1)}{\eta_2-\eta_1},
\end{eqnarray*}
where $G(\eta)=\eta-d\frac{\phi(1,\eta)}{\phi'(1,\eta)}.$
We note that $\frac{G(\eta_2)-G(\eta_1)}{\eta_2-\eta_1}=G'(\eta_3),$ for some $\eta_3$ between
$\eta_1$ and $\eta_2.$ Since $G'(\eta)=\frac{K(1,\eta)}{(d-1)\phi'(1,\eta)^2}$ and $\xi_1,\xi_2$ have the same sign, as do $\phi'(1,\eta_1)$ and $\phi'(1,\eta_2),$  we note that 
$JV(\xi_1,\xi_2,\eta_1,\eta_2)>0, \forall (\xi_1,\xi_2,\eta_1,\eta_2)\in A_{m,n,p,j}^s(k,l,a),$ since $a>0.$ In fact, we can find bounds on $JV $ in $A_{m,n,p,j}^s(k,l,a),$ as follows. We have  
 \begin{eqnarray*}
 |JV(\xi_1,\xi_2,\eta_1,\eta_2)|& = & |2(\eta_2-\eta_1)(\xi_1\xi_2)^d\phi'(1,\eta_1)\phi'(1,\eta_2)G'(\eta_3)|\\
 & = & \left|\frac{2}{d-1}(\eta_2-\eta_1)(\xi_1\xi_2)^d \frac{\phi'(1,\eta_1)\phi'(1,\eta_2)}{\phi'(1,\eta_3)^2}K(1,\eta_3)\right|,
 \end{eqnarray*}
 for some $\eta_1<\eta_3<\eta_2.$ Now, $(\xi_1,\xi_2,\eta_1,\eta_2)\in A^s_{m,n,p,j}(k,l,a)\Longrightarrow   \eta_1,\eta_2\in I_{l,n}\Longrightarrow $  \newline $  \alpha^{-l-1}<|\phi'(1,\eta_i)|<\alpha^{-l+2},$ for $i=1,2.$ Then
 we have the same inequality for $\eta_3, $ since $\eta_1<\eta_3<\eta_2$ and $I_{l,n}$ is an interval. Hence 
 $\frac{1}{\alpha^6}<\frac{\phi'(1,\eta_1)\phi'(1,\eta_2)}{\phi'(1,\eta_3)^2}<\alpha^6.$ Similarly,
$(\xi_1,\xi_2,\eta_1,\eta_2)\in A^s_{m,n,p,j}(k,l,a)\Longrightarrow \eta_1,\eta_2\in I_{k,m}\Longrightarrow $ \newline $2^{-k-1}<|K(1,\eta_i)|< 2^{-k+2},$ for $i=1,2.$ Then
 we have the same inequality for $\eta_3, $ since $\eta_1<\eta_3<\eta_2$ and $I_{k,m}$ is an interval. Also, $(\xi_1,\xi_2,\eta_1,\eta_2)\in A^s_{m,n,p,j}(k,l,a)$ \newline $\Longrightarrow \xi_1,\xi_2\in \overset{\:\circ}{\mathcal I}_p\cup \overset{\:\circ}{\tilde{\mathcal I}}_p\Longrightarrow \frac{1}{\lambda}<|\xi_1|,|\xi_2|<\lambda^2.$ Putting all these bounds together we have, for $(\xi_1,\xi_2,\eta_1,\eta_2)\in A^s_{m,n,p,j}(k,l,a),$
\begin{equation}\label{eq*13}
\frac{1}{(d-1)\alpha^6 \lambda^{2d}}2^{-k}|\eta_2-\eta_1|<|JV(\xi_1,\xi_2,\eta_1,\eta_2)|<\frac{8\alpha^6\lambda^{4d}}{d-1} 2^{-k}|\eta_2-\eta_1|.
\end{equation}
It follows immediately that  $|JV|$ is bounded below, with lower bound
$\frac{2^{-k}a}{(d-1)\alpha^6\lambda^{2d}}.$ 

 \medskip

As a preliminary result, we show that $V$ is locally one-to-one.

\begin{lemma}\label{lemma2.3}
For fixed $m,n,p,j, s$ the map $V:A^s_{m,n,p,j}(k,l,a)\rightarrow {\R}^4$ is locally one-to-one on $A^s_{m,n,p,j}(k,l,a),$ i.e, given $(\xi_1,\xi_2,\eta_1,\eta_2)\in A^s_{m,n,p,j}(k,l,a)$ there is some $\epsilon_0>0$ such that $V$ is one-to-one on $B((\xi_1,\xi_2,\eta_1,\eta_2),\epsilon_0).$ (We note that $\epsilon_0$ may depend on $(\xi_1,\xi_2,\eta_1,\eta_2),s, m,n,p,j,k,l$ and $a.$)

\end{lemma}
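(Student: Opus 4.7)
The plan is to apply the inverse function theorem. The map $V$ is smooth (in fact polynomial) in a neighborhood of any point of $A^s_{m,n,p,j}(k,l,a)$, so it suffices to verify that its Jacobian $JV$ is nonzero at each such point, and then local one-to-one-ness on an open ball is immediate from the standard inverse function theorem.

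The Jacobian has already been computed in the discussion preceding the statement:
$$JV(\xi_1,\xi_2,\eta_1,\eta_2) = 2(\eta_2-\eta_1)(\xi_1\xi_2)^d\phi'(1,\eta_1)\phi'(1,\eta_2)\,G'(\eta_3),$$
with $\eta_3$ between $\eta_1$ and $\eta_2$, and $G'(\eta)=K(1,\eta)/[(d-1)\phi'(1,\eta)^2]$. On the set $A^s_{m,n,p,j}(k,l,a)$ every factor is nonzero: $\eta_2-\eta_1>a>0$ by definition of $A^s_{m,n,p,j}(k,l,a)$; $\xi_1,\xi_2$ lie in $\overset{\:\circ}{\mathcal I}_p\cup \overset{\:\circ}{\tilde{\mathcal I}}_p$ and hence are bounded away from $0$; $\phi'(1,\eta_i)$ satisfies $\alpha^{-l-1}<|\phi'(1,\eta_i)|<\alpha^{-l+2}$ since $\eta_i\in I_{l,n}$; and $|K(1,\eta_3)|$ satisfies $2^{-k-1}<|K(1,\eta_3)|<2^{-k+2}$ since $\eta_3\in I_{k,m}$ (the intermediate value stays in the interval). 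In fact the explicit bound \eqref{eq*13} shows $|JV|>0$ uniformly on the region.

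Hence at every $(\xi_1,\xi_2,\eta_1,\eta_2)\in A^s_{m,n,p,j}(k,l,a)$, the differential $DV$ is invertible. By the inverse function theorem there exists a radius $\epsilon_0>0$, depending on the point (and on the parameters $s,m,n,p,j,k,l,a$ through the size of the Jacobian and the $C^2$ norm of $V$), such that $V$ restricted to the open ball $B((\xi_1,\xi_2,\eta_1,\eta_2),\epsilon_0)$ is a $C^\infty$ diffeomorphism onto its image, in particular one-to-one. This is exactly the conclusion of the lemma. The only issue that could arise is ensuring that the ball is contained in $A^s_{m,n,p,j}(k,l,a)$, which is automatic by shrinking $\epsilon_0$ further since $A^s_{m,n,p,j}(k,l,a)$ is open (it is defined by strict inequalities). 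No substantial obstacle is expected; the statement is essentially a consequence of the non-vanishing of $JV$ already established.
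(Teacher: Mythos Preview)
Your proposal is correct and follows exactly the same approach as the paper: the set $A^s_{m,n,p,j}(k,l,a)$ is open and $|JV|>0$ there, so the Inverse Function Theorem gives local injectivity. The paper's proof is simply a terser version of what you wrote.
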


\begin{proof}
It is easily seen that each $A^s_{m,n,p,j}(k,l,a)$ is an open set, by its construction. Therefore, since $|JV(\xi_1,\xi_2,\eta_1,\eta_2)|$ is positive on
$A^s_{m,n,p,j}(k,l,a),$  the Inverse Function Theorem gives the result. 

\end{proof}

In order to prove that $V$ is many-to-one on the region of integration under consideration we will need a result from algebraic topology and real algebraic geometry. This result will prove useful several times in the course of our proof. 

 Let $X$ be a set in $\R^n,$ defined by polynomial inequalities
$$f_1(x_1,\dots, x_n)\geq 0, \dots, f_p(x_1,\dots, x_n)\geq 0.$$
Then the $q$th {\it Betti number} of $X$ is defined to be the rank of the \v{C}ech cohomology group 
  $H^q (X),$ with coefficients in some fixed field $F.$ 
  Also, the sum of the Betti numbers of $X$ is defined to be
  $\text{rank } H^* X.$ (See \cite{BCR}, \cite{H} for the relevant definitions.) 
  
  We now state the following theorem, due to Milnor.
  
  \begin{theorem}\label{milnor} \cite{M} (Theorem 3)
  If $X\subset \R^n$ is defined by polynomial inequalities of the form
  $$f_1\geq 0,\dots,f_p\geq 0$$
  with total degree $D=\text{deg }f_1+\cdots+\text{deg }f_p,$ then
  $$\text{rank } H^* X\leq \frac12 (2+D)(1+D)^{n-1}.$$
  \end{theorem}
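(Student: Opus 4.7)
The strategy is Milnor's original one: reduce to bounding the Betti numbers of a smooth compact real algebraic variety, then apply Morse theory together with Bezout's theorem to count critical points of a generic linear height function on it.

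The first step is to smoothly approximate $X$. For generic small $\epsilon_1, \ldots, \epsilon_p > 0$, set $X_\epsilon = \{x : f_i(x) + \epsilon_i \geq 0 \text{ for all } i\}$. The bounding hypersurfaces $\{f_i = -\epsilon_i\}$ meet transversally for generic $\epsilon$, and a standard upper-semicontinuity argument for Betti numbers of semi-algebraic sets gives $\mathrm{rank}\, H^*(X) \leq \mathrm{rank}\, H^*(X_\epsilon)$. Compactness, if needed, can be arranged by intersecting with a large ball (either incorporated into the list of inequalities or handled by a limit argument at the end).

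Next, lift $X_\epsilon$ to a smooth real algebraic variety. Introduce auxiliary variables $y_1, \ldots, y_p$ and set
$$\tilde V = \{(x,y) \in \R^{n+p} : y_i^2 = f_i(x) + \epsilon_i, \ i = 1, \ldots, p\}.$$
For generic $\epsilon$ the defining equations have independent gradients, so $\tilde V$ is smooth of dimension $n$. The finite group $G = (\mathbb{Z}/2)^p$ acts on $\tilde V$ by $y_i \mapsto -y_i$, and the projection $(x,y) \mapsto x$ identifies the quotient $\tilde V/G$ with $X_\epsilon$. A transfer argument in \v{C}ech cohomology (or averaging of $G$-invariant cochains) then yields $\mathrm{rank}\, H^*(X_\epsilon) \leq \mathrm{rank}\, H^*(\tilde V)$.

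Finally, apply Morse theory and Bezout. For a generic linear functional $L(x,y) = a\cdot x + b\cdot y$, the restriction $L|_{\tilde V}$ is Morse, so the Morse inequalities give
$$\mathrm{rank}\, H^*(\tilde V) \leq \#\{\text{critical points of } L|_{\tilde V}\}.$$
The critical set is the zero locus of a polynomial system consisting of the $p$ defining equations of $\tilde V$ together with $n$ Lagrange-multiplier equations forcing $\nabla L$ to lie in the span of the gradients of those defining equations. The degrees of these polynomials are controlled by $D = \sum_i \deg f_i$, and the complex Bezout bound on the number of isolated solutions in $\mathbb{C}^{n+p}$ (which upper-bounds the real count) yields the claimed estimate $\tfrac{1}{2}(2+D)(1+D)^{n-1}$ after careful bookkeeping of degrees and taking $\epsilon \to 0$. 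The main obstacle is precisely this bookkeeping: arranging perturbations and degree counts so that the raw Bezout bound collapses to the stated constant, while simultaneously verifying the genericity conditions that make $\tilde V$ smooth, $L|_{\tilde V}$ Morse, and the critical set finite.
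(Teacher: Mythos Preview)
The paper does not prove this theorem; it is quoted from Milnor's 1964 paper and used as a black box (only the corollary bounding the number of connected components is ever invoked). So there is no proof in the paper to compare against.

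That said, your sketch does not reproduce Milnor's argument, and as written it will not yield the stated constant. The double-cover lift $y_i^2=f_i+\epsilon_i$ to $\R^{n+p}$ and the transfer inequality $\mathrm{rank}\,H^*(X_\epsilon)\le \mathrm{rank}\,H^*(\tilde V)$ are fine, but the B\'ezout count for critical points of a generic linear function on the codimension-$p$ variety $\tilde V$ is governed by the \emph{product} of the degrees $\max(2,\deg f_i)$ together with powers of quantities like $p+\max_i\deg f_i-2$, not by the sum $D=\sum_i\deg f_i$. One obtains a bound of a genuinely different shape, which neither equals nor in general is dominated by $\tfrac12(2+D)(1+D)^{n-1}$. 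The ``bookkeeping'' you flag as the main obstacle is therefore a real structural mismatch, not mere tedium.

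Milnor's own route for his Theorem~3 stays in $\R^n$. After intersecting with a large ball he observes that $\{f_1>0,\dots,f_p>0,\ |x|<R\}$ is a union of connected components of $\{h>0\}$ for the single polynomial $h(x)=f_1(x)\cdots f_p(x)\,(R^2-|x|^2)$ of degree $D+2$; for generic small $\delta$ the region $\{h\ge\delta\}$ is a compact manifold with smooth boundary of degree $D+2$, and Morse theory for a linear height function then gives $\mathrm{rank}\,H^*(\{h\ge\delta\})\le \tfrac12\,d(d-1)^{n-1}$ with $d=D+2$, which is exactly the stated bound. For the purposes of the present paper any bound depending only on $D$ and $n$ would suffice, so your approach would serve that limited purpose; but it does not prove the theorem as stated.
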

  
  An immediate corollary of this is 
  
  \begin{corollary} \label{alggeom}
  If $X$ is as in Milnor's theorem, then the number of connected components
  of $X$ is $\leq \frac{1}{2}(2+D)(1+D)^{n-1}.$ Moreover, each connected component is path-connected.
  \end{corollary}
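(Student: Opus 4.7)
The plan is to extract both statements directly from Milnor's theorem combined with standard facts about \v{C}ech cohomology and semi-algebraic sets.

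First I would observe that the zeroth \v{C}ech cohomology rank $\text{rank } H^0(X)$ counts the number of connected components of $X$ (with coefficients in the field $F$, each connected component contributes a copy of $F$ to $H^0$, since $X$ is a reasonable space — in particular a subset of $\R^n$ defined by finitely many polynomial inequalities is locally compact and the \v{C}ech $H^0$ of such a space agrees with the usual count of connected components). Since every Betti number is non-negative,
\begin{equation*}
\#\{\text{connected components of }X\} \;=\; \text{rank } H^0(X) \;\leq\; \sum_q \text{rank } H^q(X) \;=\; \text{rank } H^* X.
\end{equation*}
Applying Milnor's theorem (Theorem \ref{milnor}) to the right-hand side immediately gives the bound $\frac12(2+D)(1+D)^{n-1}$.

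For the second assertion, I would invoke the standard fact from real algebraic geometry that any semi-algebraic set is locally path-connected. Since $X$ is defined by finitely many polynomial inequalities $f_i \geq 0$, it is semi-algebraic, hence locally path-connected, and for a locally path-connected space the connected components coincide with the path-connected components. A reference for this is \cite{BCR}, where one can find, for instance, the semi-algebraic triangulation theorem or an explicit cell decomposition which exhibits local path-connectedness. Thus each connected component of $X$ is automatically path-connected, completing the corollary.

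The only mildly subtle point is the identification $\text{rank } H^0(X) = \#\{\text{connected components}\}$ for \v{C}ech cohomology; this is where one uses that $X$, being semi-algebraic, has only finitely many connected components (already guaranteed by Milnor's bound) and that these components are both open and closed in $X$, so there is no discrepancy between connected components and quasi-components. I would simply remark on this and cite \cite{BCR} or \cite{H} for the underlying topological facts.
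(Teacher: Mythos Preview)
Your proposal is correct and follows essentially the same approach as the paper: both argue that the zeroth Betti number $b_0$ equals the number of connected components, bound it by the total $\text{rank}\,H^*X$ via Milnor's theorem, and then appeal to standard facts about semi-algebraic sets (citing \cite{BCR} and \cite{H}) for path-connectedness. Your write-up is slightly more explicit about the local path-connectedness step and the identification of \v{C}ech $H^0$ with the component count, but the substance is the same.
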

  
  \begin{proof}
  Let $b_0$ be the $0^{\text{th}}$ Betti number of $X.$ Then $b_0$ is the number of connected
  components of $X.$ See \cite{BCR}, Theorem 2.4.5, Proposition 2.5.11 and Remark 11.5.5. 
 Another source for these last results is \cite{H}, Proposition 2.7 and Chapter 3, p.198. 
\end{proof}  
 
With $m,n,p,j,s$ still fixed, we now fix $(u_0,v_0,w_0)$ such that there exists $z_0$ with  
$(u_0,v_0,w_0,z_0)\in V(A^s_{m,n,p,j}(k,l,a)).$ We use the usual notation that 
\begin{eqnarray*}
& & V(\xi_1,\xi_2,\eta_1,\eta_2)\\
& & =(V_1(\xi_1,\xi_2,\eta_2,\eta_2),V_2(\xi_2,\xi_2,\eta_1,\eta_2),V_3(\xi_1,\xi_2,\eta_1,\eta_2),V_4(\xi_2,\xi_2,\eta_1,\eta_2)).
\end{eqnarray*}
Then, for simplicity of notation, we let $A$ 
denote $A^s_{m,n,p,j}(k,l,a)$ and we define
\begin{eqnarray*}
A(u_0,v_0,w_0) & = & \{(\xi_2,\xi_2,\eta_1,\eta_2)\in A \mid V_1(\xi_1,\xi_2,\eta_1,\eta_2)=u_0, \\
& & \qquad\qquad \qquad V_2(\xi_1,\xi_2,\eta_1,\eta_2)=v_0, V_3(\xi_1,\xi_2,\eta_1,\eta_2)=w_0\}.
\end{eqnarray*}

\begin{lemma}\label{lemma2.5}
For any $\overline{z}\in V_4(A(u_0,v_0,w_0)),$
$$\#\{(\xi_1,\xi_2,\eta_1,\eta_2)\in A\mid (\xi_1,\xi_2,\eta_1,\eta_2)\in V^{-1}(u_0,v_0,w_0,\overline{z})\}\leq \overline{N}(d),$$
where $\overline{N}(d)$ is a constant depending only on $d.$
\end{lemma}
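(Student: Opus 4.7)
The plan is to realize $\Sigma := V^{-1}(u_0,v_0,w_0,\overline{z}) \subset \R^4$ as a real algebraic variety of bounded complexity, apply Corollary \ref{alggeom} to bound its number of connected components, and then use Lemma \ref{lemma2.3} to show that each preimage of interest is a singleton connected component of $\Sigma$, hence contributes to the count.

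First I would observe that $V_1, V_2, V_3, V_4$ are polynomials in $(\xi_1,\xi_2,\eta_1,\eta_2)$ of total degree bounded only in terms of $d$: explicitly, $V_1$ and $V_2$ have total degree at most $2$, while $V_3=\xi_1^d\phi(1,\eta_1)+\xi_2^d\phi(1,\eta_2)$ and $V_4=\xi_2^d\phi(1,\eta_2)-\xi_1^d\phi(1,\eta_1)$ have total degree at most $2d$, since $\phi(1,\cdot)$ is a polynomial of degree $d$. Hence $\Sigma$ is the common zero set of four polynomials, which I can present as a closed semi-algebraic set defined by the eight non-strict polynomial inequalities $V_i-c_i\geq 0$ and $c_i-V_i\geq 0$ for $i=1,\ldots,4$ (where $c_i$ denotes the corresponding component of $(u_0,v_0,w_0,\overline{z})$), whose total degree $D$ is bounded in terms of $d$ alone. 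Applying Corollary \ref{alggeom} in $\R^4$ then yields a bound on the number of connected components of $\Sigma$ by $\tfrac12(2+D)(1+D)^3=:\overline{N}(d)$.

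Next I would verify that each preimage $p\in V^{-1}(u_0,v_0,w_0,\overline{z})\cap A$ is itself a connected component of $\Sigma$. Given such a $p$, Lemma \ref{lemma2.3} supplies an open ball $B(p,\epsilon_0)\subset A$ on which $V$ is injective; since $V(p)=(u_0,v_0,w_0,\overline{z})$, no other point of $B(p,\epsilon_0)$ can map to the same value, so $B(p,\epsilon_0)\cap \Sigma=\{p\}$. In particular $\{p\}$ is open in $\Sigma$, and therefore a connected component of $\Sigma$. Distinct preimages give rise to distinct singleton components, so the total number of preimages in $A$ is bounded above by $\overline{N}(d)$.

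There is no serious obstacle here. The one critical observation is that local injectivity (Lemma \ref{lemma2.3}) is precisely what upgrades each preimage from an arbitrary point of $\Sigma$ to an isolated point of $\Sigma$, placing it in its own connected component; these components are then counted by the zeroth Betti number $b_0(\Sigma)$, which is bounded in terms of the degrees alone by Milnor's theorem. Since those degrees depend only on $d$, so does $\overline{N}$, as required.
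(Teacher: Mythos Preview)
Your proof is correct and follows essentially the same approach as the paper: bound the connected components of the preimage via Milnor's theorem (Corollary \ref{alggeom}), then use the local injectivity of $V$ from Lemma \ref{lemma2.3} to see that each preimage in $A$ is isolated and hence occupies its own component. The only difference is that you apply Milnor to the full variety $\Sigma=V^{-1}(u_0,v_0,w_0,\overline{z})\subset\R^4$, whereas the paper applies it to $\Sigma\cap A$, carrying along all of the strict inequalities that define $A$; your version is slightly cleaner (and sidesteps the issue of strict versus non-strict inequalities in Milnor's hypothesis), but the underlying idea is identical.
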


\begin{proof}
Let $B=\{(\xi_1,\xi_2,\eta_1,\eta_2)\in A\mid
(\xi_1,\xi_2,\eta_1,\eta_2)\in V^{-1}(u_0,v_0,w_0,\overline{z})\}.$
Then $B$ is defined by a finite set of polynomial
inequalities. For example, if $s=1, j\neq 0,$ then, if we let $\overset{\:\circ}{\mathcal I}_p=(a_p,b_p), I_{k,m}=(c_{k,m},d_{k,m}),$ and $I_{l,n}=(e_{l,n},f_{l,n}),$ $B$ is given by the polynomial inequalities
\begin{eqnarray*}
&  \xi_1+\xi_2\geq u_0 & \xi_1+\xi_2\leq u_0\\
&  \xi_1\eta_1+\xi_2\eta_2\geq v_0  & \xi_1\eta_1+\xi_2\eta_2\leq v_0\\
&  \xi_1^d\phi(1,\eta_1)+\xi_2^d\phi(1,\eta_2)\geq w_0 &\xi_1^d\phi(1,\eta_1)+\xi_2^d\phi(1,\eta_2)\leq w_0\\
&  \xi_2^d\phi(1,\eta_2)-\xi_1^d\phi(1,\eta_1)\geq \overline{z} & \xi_2^d\phi(1,\eta_2)-\xi_1^d\phi(1,\eta_1)\leq \overline{z}\\
&  -1<\eta_1<1 & -1<\eta_2<1\\
& \tan (\theta_j-\gamma_0)< \eta_1<\tan(\theta_j+\gamma_0) & \tan (\theta_j-\gamma_0) < \eta_2<\tan(\theta_j+\gamma_0)\\
& a_p<\xi_1<b_p & a_p<\xi_2<b_p\\
& c_{k,m}<\eta_1<d_{k,m} & c_{k,m}<\eta_2<d_{k,m}\\
& e_{l,n}<\eta_1<f_{l,n} & e_{l,n}<\eta_2<f_{l,n}\\
& \eta_2-\eta_1>a.&
\end{eqnarray*}
The other cases are similar. 

By Corollary \ref{alggeom}, if $D$ represents the total degree of the polynomials, then the number of connected components of $B$ is $\leq \frac12(2+D)(1+D)^3.$ Since $D$ depends only on $d,$ if we take $\overline{N}(d)=\frac12 (2+D)(1+D)^3$ 
then the number of connected components of
$B$ is $\leq \overline{N}(d).$ Moreover, by Corollary \ref{alggeom} each connected component of $B$ is path-connected. We let $C$ be a path-connected component of $B$ and take $(\xi_1,\xi_2,\eta_1,\eta_2)\in C.$ Then, by Lemma 
\ref{lemma2.3}, $V$ is locally one-to-one and so there is some $\epsilon_0>0$ such that 
$B((\xi_1,\xi_2,\eta_1,\eta_2),\epsilon_0)$ contains no other points of $C.$ Since $C$ is path-connected we must have $C=\{(\xi_1,\xi_2,\eta_1,\eta_2)\}.$  Thus the number of elements of $B$ is $\leq \overline{N}(d).$ We note that although in Lemma \ref{lemma2.3} the size of the ball depends on many parameters that dependence plays no role here.

\end{proof}

We now come to the crux of the proof. We recall that, for $j\in J_1,$ \eqref{lowerbound} holds. It follows that  whenever
$(\xi_1,\xi_2,\eta_1,\eta_2)\in A^s_{m,n,p,j}(k,l,a)$ we have
$\frac{|K(1,\eta_i)|}{|\phi'(1,\eta_i)^2|}\geq \epsilon,$ for $i=1,2.$

 \begin{proposition}\label{prop2.5}
 We let $A$ represent one of the sets $A^s_{m,n,p,j}(k,l,a),$ with $j\in J_1.$ Then
  \begin{eqnarray*}
 & & \int_{V_4(A(u_0,v_0,w_0))}\sum_{V^{-1}(u_0,v_0,w_0,z)}\frac{|K(\xi_1,\xi_1\eta_1)K(\xi_2,\xi_2\eta_2)|^{\frac14}}{|JV(\xi_1,\xi_2,\eta_1,\eta_2)|}\Psi_{0lk}(\xi_1,\xi_1\eta_1)^2\cdot \\
 & & \qquad\qquad\qquad \qquad  \Psi_{0lk}(\xi_2,\xi_2\eta_2)^2 |\xi_1\xi_2| \chi_A(\xi_1,\xi_2,\eta_1,\eta_2) dz \leq C,
 \end{eqnarray*}
  where $C$ is independent of
 $ k,l,a, u_0,v_0,w_0,$ depending only on $d $ and $\epsilon.$ (We use $\sum_{V^{-1}(u_o,v_0,w_0,z)}$ to denote $\sum_{(\xi_1,\xi_2,\eta_1,\eta_2)\in V^{-1}(u_0,v_0,w_0,z)}.$)
 
 \end{proposition}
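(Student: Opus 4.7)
The plan is to prove Proposition~\ref{prop2.5} by reducing the fiber integral to a one-dimensional integral along a single branch of the multi-valued map $V^{-1}$ and then exploiting the affine-invariant lower bound $|K(1,\eta)|\ge\epsilon|\phi'(1,\eta)|^2$ that is available for $j\in J_1$.

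First, by Lemma~\ref{lemma2.5}, the sum $\sum_{V^{-1}(u_0,v_0,w_0,z)}$ has at most $\overline N(d)$ terms, so it suffices to bound the contribution of a single branch by a constant depending only on $d$ and $\epsilon$. I will work in the $(\xi,\mu)$-coordinates $\mu_i=\xi_i\eta_i$: since $|JV|_{(\xi,\eta)}=|\xi_1\xi_2|\cdot|JV|_{(\xi,\mu)}$ with $|JV|_{(\xi,\mu)}=2|\phi_\xi(\xi_1,\mu_1)\phi_\mu(\xi_2,\mu_2)-\phi_\mu(\xi_1,\mu_1)\phi_\xi(\xi_2,\mu_2)|$, the factor $|\xi_1\xi_2|$ appearing in the integrand of the proposition is absorbed and the integrand becomes $|K_1K_2|^{1/4}\Psi_{0lk}^2\Psi_{0lk}^2/|JV|_{(\xi,\mu)}$.

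Using $\xi_1=u_0-\xi_2$ and $\mu_1=v_0-\mu_2$, a branch of the fiber projects bijectively to a connected curve in the $(\xi_2,\mu_2)$-plane defined implicitly by $\phi(u_0-\xi_2,v_0-\mu_2)+\phi(\xi_2,\mu_2)=w_0$. I parametrize this curve locally by $\mu_2$, switching to $\xi_2$ where necessary; by the polynomial structure and Corollary~\ref{alggeom} only finitely many chart switches are required. A direct implicit-function computation, using $z=\phi_2-\phi_1=w_0-2\phi_1$ so that $dz=-2\,d\phi_1$ together with differentiation of $\phi_1+\phi_2=w_0$, yields
$$\left|\frac{dz}{d\mu_2}\right|=\frac{|JV|_{(\xi,\mu)}}{|\phi_\xi(\xi_2,\mu_2)-\phi_\xi(\xi_1,\mu_1)|},$$
so the branch's contribution to the fiber integral becomes
$$\int\frac{|K_1K_2|^{1/4}\Psi_{0lk}^2\Psi_{0lk}^2}{|\phi_\xi(\xi_2,\mu_2)-\phi_\xi(\xi_1,\mu_1)|}\,d\mu_2,$$
with the integral taken over the projection of the branch onto the $\mu_2$-axis.

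The pointwise bounds $|K_i|^{1/4}\le C_d\,2^{-k/4}$, $|\xi_i|\le\lambda^2$ and $\Psi_{0lk}\le 1$ on $A^s_{m,n,p,j}(k,l,a)$ reduce the claim to the estimate $\int d\mu_2/|\phi_\xi(P_2)-\phi_\xi(P_1)|\lesssim 2^{k/2}$. I obtain this by writing $\phi_\xi(P_2)-\phi_\xi(P_1)=\nabla\phi_\xi(P_\ast)\cdot(P_2-P_1)$ via the mean value theorem and invoking the hypothesis $j\in J_1$: Lemmas~\ref{lemmaZ1} and~\ref{propZ2} supply $|K(1,\eta)|\ge\epsilon|\phi'(1,\eta)|^2$ on the support, equivalently $G'(\eta)\ge\epsilon/(d-1)$ so that $G$ is strictly monotonic. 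This monotonicity, together with the homogeneity of $\phi$ and Euler's identity, controls the Hessian of $\phi$ at $P_\ast$ quantitatively. Combined with the fact that $\mu_2$ lies in a bounded interval dictated by the support conditions, an explicit change of variables on the branch leaves an elementary integral bounded by a constant depending only on $d$ and $\epsilon$.

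The main obstacle is this last quantitative step: producing a lower bound on the denominator $|\phi_\xi(P_2)-\phi_\xi(P_1)|$ (or its $\mu$-counterpart on the alternative chart) that is uniform in the parameters $(k,l,a,u_0,v_0,w_0)$. The affine-invariant condition $|K|\ge\epsilon|\phi'|^2$ is essential, since without it the two eigenvalues of $\text{Hess}(\phi)$ could be so disparate that the denominator vanishes too rapidly along the branch, destroying uniformity. Handling the transitions between the $\mu_2$- and $\xi_2$-parametrizations requires the polynomial structure of the curve and Corollary~\ref{alggeom}, analogously to the use of those tools in Lemmas~\ref{lemma2.3} and~\ref{lemma2.5}.
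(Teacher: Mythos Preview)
Your reduction is sound up to the point where you arrive at the one-variable integral, and in fact it is equivalent to the paper's reduction to
\[
\int_{V_4(A_r(u_0,v_0,w_0))}\frac{2^{k/2}}{\eta_2-\eta_1}\,dz\le C.
\]
The real content of Proposition~\ref{prop2.5}, however, is precisely the ``last quantitative step'' that you flag as the main obstacle and then leave undone. Invoking the mean value theorem to write $\phi_\xi(P_2)-\phi_\xi(P_1)=\nabla\phi_\xi(P_\ast)\cdot(P_2-P_1)$ does not by itself give a usable lower bound: the point $P_\ast$ lies off the constraint curve, the vector $P_2-P_1$ has no simple relation to $d\mu_2$, and the hypothesis $|K|\ge\epsilon|\phi'|^2$ controls $G'(\eta)$ but not directly the size of $\nabla\phi_\xi$ in the direction of $P_2-P_1$. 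In particular, nothing in your sketch prevents the denominator from vanishing to first order along the branch, which would already make the integral diverge.

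What the paper does at this point is substantially more delicate. Setting $F(z)=\eta_2-\eta_1$, one computes $F(z)F'(z)/z$ explicitly and shows, via a separate technical lemma (Lemma~\ref{lemma4}), that on the relevant region this quotient is trapped between two negative constants depending only on $d,\epsilon$. Integrating the resulting differential inequality gives a square-root lower bound $F(z)^2\gtrsim 2^k(b^2-z^2)$, which is exactly what makes $\int dz/F(z)$ converge uniformly. Proving Lemma~\ref{lemma4} is where the carefully tuned constants $\delta,\lambda,\alpha$ of \eqref{delta}--\eqref{alpha} are used, and even then one must first split the $z$-interval according to the relative sizes and signs of $(\xi_2^d-\xi_1^d)\phi(1,\eta_1)$ and $\xi_2^d(\phi(1,\eta_2)-\phi(1,\eta_1))$, with a further dyadic decomposition and geometric summation to handle the near-cancellation region where $|z|$ is much smaller than either term. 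None of this structure is visible in your outline; the phrase ``an explicit change of variables on the branch leaves an elementary integral'' is a placeholder for what is in fact the longest and most intricate part of the argument.
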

 
 Before proving Proposition \ref{prop2.5} we have the following corollary.
  
 \begin{corollary}\label{corollary2.6}
 a) Let $j\in J_1.$ Then 
 $$\|U(t)S_0^{\lambda} Q^\alpha_l P_k W_jg\|_{L^4_{xyt}}\leq C\|g\|_{L^2_{xy}},\ \forall g\in L^2_{xy},$$
 with $C$ independent of $k$ and $l,$ depending only on $d$ and $\epsilon.$ 
 
 b) Let $j\in J_1.$ Then
 $$\|U(t)W_j g\|_{L^4_{xyt}}\leq C\|g\|_{L^2_{xy}}, \forall g\in L^2(\R^2), 
 $$
 with $C$ independent of $k$ and $l,$ depending only on $d$ and $\epsilon.$
 \end{corollary}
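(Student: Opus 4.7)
For part (a), my plan is to invoke Lemma \ref{lemma2.2}, which reduces the desired bound to the bilinear $L^2$ estimate \eqref{eq*3} on each $A^s_{m,n,p,j}(k,l,a)$ for $j \in J_1$. To verify \eqref{eq*3}, I will first use the homogeneity of $K$ (of degree $2(d-2)$) to write $|K(\xi,\xi\eta)|^{1/8} = |\xi|^{(d-2)/4}|K(1,\eta)|^{1/8}$. I then perform the change of variables $V:(\xi_1,\xi_2,\eta_1,\eta_2)\mapsto (u,v,w,z)$ from \eqref{eq*11}; since the phase becomes $e^{i(xu+yv+tw)}$, Plancherel's theorem in $(x,y,t)\leftrightarrow(u,v,w)$ converts the $L^2_{xyt}$ norm of the bilinear expression into a constant times $\|I\|_{L^2_{uvw}}$, where $I(u,v,w)$ is the amplitude integrated in $z$ and summed over the preimages $V^{-1}(u,v,w,z)$. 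Lemma \ref{lemma2.5} guarantees each such preimage has at most $\overline{N}(d)$ elements, so the sum is harmlessly finite.

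The crux will be a Cauchy--Schwarz inequality in the $z$-integral and over the preimages. Writing $\Psi_i = \Psi_{0lk}(\xi_i,\xi_i\eta_i)$, I will split the integrand of $I$ as a product $\alpha\cdot\beta$ with
$$\alpha = \frac{|\xi_1\xi_2|^{(d-2)/8}|K(1,\eta_1)K(1,\eta_2)|^{1/16}\Psi_1\Psi_2|\xi_1\xi_2|^{1/2}}{|JV|^{1/2}}, \quad \beta = \frac{\hat g(\xi_1,\xi_1\eta_1)\hat g(\xi_2,\xi_2\eta_2)|\xi_1\xi_2|^{1/2}}{|JV|^{1/2}},$$
chosen so that $|\alpha|^2$ is precisely the integrand of Proposition \ref{prop2.5}, after recombining $|\xi_i|^{(d-2)/4}|K(1,\eta_i)|^{1/4}$ back into $|K(\xi_i,\xi_i\eta_i)|^{1/4}$ via homogeneity. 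Proposition \ref{prop2.5} then yields $\int_z\sum_{V^{-1}}|\alpha|^2\,dz \leq C$ uniformly in $(u,v,w)$ and in $k,l,a$. For the $\beta$-factor, the multi-branch area formula
$$\int du\,dv\,dw\,dz \sum_{V^{-1}(u,v,w,z)} \frac{h(\xi_1,\xi_2,\eta_1,\eta_2)}{|JV|} = \int_A h(\xi_1,\xi_2,\eta_1,\eta_2)\, d\xi_1 d\xi_2 d\eta_1 d\eta_2$$
applied to $h = |\hat g(\xi_1,\xi_1\eta_1) \hat g(\xi_2,\xi_2\eta_2)|^2 |\xi_1\xi_2|$, followed by the substitution $\mu_i = \xi_i\eta_i$ (legitimate since $\xi_1,\xi_2$ have a common sign on $A^s_{m,n,p,j}(k,l,a)$), collapses $\int\int_z\sum_{V^{-1}}|\beta|^2\,dz\,du\,dv\,dw$ into $\|g\|_{L^2_{xy}}^4$. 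Combining these two estimates gives \eqref{eq*3} with constant independent of $k,l,a$, and Lemma \ref{lemma2.2} then closes part (a).

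Part (b) is then immediate from (a) together with Proposition \ref{prop1}: the hypothesis \eqref{LPreduction2} is exactly the conclusion of (a) for each $j\in J_1$, and its conclusion \eqref{extforW} is precisely the desired bound on $U(t)W_j g$.

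I anticipate the principal obstacle to be the bookkeeping of the Cauchy--Schwarz split. The $\alpha$-factor must be arranged so that $|\alpha|^2$ matches the integrand of Proposition \ref{prop2.5} exactly --- with $|K|^{1/4}$, $\Psi_{0lk}^2$, and $|\xi_1\xi_2|$ all appearing with the correct exponents --- while simultaneously $|\beta|^2$, after unwinding $V$ and the substitution $\mu_i=\xi_i\eta_i$, must reconstruct $\|g\|_{L^2_{xy}}^4$ with no stray Jacobian weights. The single-signedness of $K(1,\eta)$ on each $I_{k,m}$ (which justifies the choice of sign in $z$ in \eqref{eq*10}) and the strict positivity of $JV$ on $A^s_{m,n,p,j}(k,l,a)$ guaranteed by \eqref{eq*13} together with $a>0$ are essential for the change of variables to be orientation-preserving and for the $\alpha$/$\beta$ split to be non-degenerate.
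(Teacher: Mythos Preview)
Your approach is essentially identical to the paper's: reduce to \eqref{eq*3} via Lemma~\ref{lemma2.2}, change variables by $V$, apply Plancherel in $(u,v,w)$, then Cauchy--Schwarz in $z$ and over preimages, bounding one factor by Proposition~\ref{prop2.5} and collapsing the other to $\|g\|_{L^2}^4$ by the area formula; part (b) then follows from (a) and Proposition~\ref{prop1}. There is, however, an arithmetic slip in your explicit split: since $|K(\xi_i,\xi_i\eta_i)|^{1/4}=|\xi_i|^{(d-2)/2}|K(1,\eta_i)|^{1/4}$ (not exponent $(d-2)/4$), to make $|\alpha|^2$ match the integrand of Proposition~\ref{prop2.5} and simultaneously have $\alpha\beta$ reproduce the integrand of $I$, you need
\[
\alpha=\frac{|\xi_1\xi_2|^{(d-2)/4}\,|K(1,\eta_1)K(1,\eta_2)|^{1/8}\,\Psi_1\Psi_2\,|\xi_1\xi_2|^{1/2}}{|JV|^{1/2}},
\]
i.e.\ exponents twice what you wrote; with your $\alpha$, the product $\alpha\beta$ does not equal the amplitude you are trying to bound.
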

 
 \begin{proof}
 We first note that b) is immediate from a), by Proposition
 \ref{prop1}.
 
  It suffices to assume $\hat{g}\in C_0^{\infty} ({\R}^2),$ by density. From \cite{Simon}, p.47, we have the following change of variables formula:
  \begin{equation}\label{*27}
 \int_{{\R}^4}Jf(x)G(x)dx = \int_{{\R}^4}\left(\int_{f^{-1}(y)}G d\mathcal{H}^{0}\right) dy
 \end{equation}
 where $\mathcal{H}^{0}$ denotes the $0-$dimensional Hausdorff measure (counting measure), $G$ is non-negative and Borel-measurable, $f\in C^1,$ and $Jf$ denotes the Jacobian of $f.$ 
 Thus, if $\#\{f^{-1}(y)\}=N_f(y)<\infty,$ then we have
  \begin{equation}\label{*28}
 \int_{{\R}^4}Jf(x)G(x) dx=\int_{{\R}^4}\left(\sum_{x\in f^{-1}(y)}G (x)\right) dy.
 \end{equation}
 
 We first show that \eqref{*27} and \eqref{*28} hold under the
 alternative assumptions that $G$ is bounded, Borel-measurable,
 supported on a compact set $K,$ $f$ is such that \newline
 $\sup_y\#(\{f^{-1}(y)\}\cap K) =N_f(K)<\infty,$
  and $f\in C^1(K).$  To show this we take such a $G$ and write it as 
 $G=G^+ + G^-$ where $G^+ (x)=\frac{|G(x)|+G(x)}{2}\geq 0$ and $G^- (x)=\frac{|G(x)|-G(x)}{2}\geq 0.$
 Then, since $G^+, G^-$ are non-negative and $G$ is  Borel measurable, we have, by \eqref{*27},
 $$\int_{{\R}^4}Jf(x)G^{\pm} (x)dx =
 \int_{{\R}^4}\left(\int_{f^{-1}(y)} G^{\pm}d \mathcal{H}^{0}\right) dy,$$
 and both quantities are finite since $Jf$ is bounded on $K$ and $G^{\pm}$ are bounded and supported on $K.$ 
 Now $Jf$ is bounded on $K$ (since $f\in C^1(K)$), $N_f(K)<\infty,$ and $G^{\pm}$ are bounded and supported on the compact set $K$, and hence
 \begin{eqnarray*}
 \int_{{\R}^4}Jf(x)G(x)dx & = & \int_{{\R}^4}Jf(x)G^+(x)dx -\int_{{\R}^4}Jf(x)G^{-}(x)dx\\
 & = & \int_{{\R}^4}\left(\sum_{x\in f^{-1}(y)}G^{+}(x)\right)dy-\int_{{\R}^4}\left(\sum_{x\in f^{-1}(y)}G^{-}(x)\right)dy\\
 & = & \int_{{\R}^4}\left(\sum_{x\in f^{-1}(y)}G(x)\right) dy\\
 & = & \int_{{\R}^4}\left(\int_{f^{-1}(y)} G d \mathcal{H}^{0}\right)dy.
 \end{eqnarray*}
 
 Now we recall the definition of $V$ as given in \eqref{eq*10} and \eqref{eq*11}. We let $A$ be as in Proposition \ref{prop2.5}. $V$ is clearly $C^1$ on $A,$ and hence on any compact subset of $A.$ 
 
  We now define
 \begin{eqnarray*}
 & & h(\xi_1,\xi_2,\eta_1,\eta_2)\\
 & = &  e^{it(\xi_1^d\phi(1,\eta_1)+\xi_2^d\phi(1,\eta_2))} e^{i[x(\xi_1+\xi_2)+y(\xi_1\eta_1+\xi_2\eta_2)]}\frac{|\xi_1\xi_2|^{\frac{d-2}{4}}|K(1,\eta_1)K(1,\eta_2)|^{\frac18}}{JV(\xi_1,\xi_2,\eta_1,\eta_2)}\cdot \\
 & & \Psi_{0lk}(\xi_1,\xi_1\eta_1)\Psi_{0lk}(\xi_2,\xi_2\eta_2)\hat{g}(\xi_1,\xi_1\eta_1)\hat{g}(\xi_2,\xi_2\eta_2)\xi_1\xi_2.
 \end{eqnarray*}
 Then, we notice that on $A,$ using also the fact that $\hat{g}\in C_0^{\infty},$ 
 $$|h(\xi_1,\xi_2,\eta_1,\eta_2)|\leq C \frac{2^{\frac{3k}{4}}}{a},$$
 where $C$ depends only on $d, \epsilon$ and the bounds of $\hat{g}.$ Thus $h$ is bounded on $A$, continuous, compactly supported and, by Lemma \ref{lemma2.5}, we have also $\#\{V^{-1}(u,v,w,z)\in A\}=N_V (u,v,w,z) \leq \overline{N}(d).$ Then, by \eqref{*28},
  we have (first replacing $A$ with $A_\alpha$ for $A_\alpha $ compact, nested ($A_{\alpha+1}\supseteq A_{\alpha})$ and satisfying $A=\bigcup_{\alpha=1}^{\infty} A_{\alpha}$, and then passing to the limit)
  \begin{eqnarray*}
& &  \int_A e^{it(\xi_1^d\phi(1,\eta_1)+\xi_2^d\phi(1,\eta_2))} e^{i[x(\xi_1+\xi_2)+y(\xi_1\eta_1+\xi_2\eta_2)]}|\xi_1\xi_2|^{\frac{d-2}{4}}|K(1,\eta_1)K(1,\eta_2)|^{\frac18}\cdot \\
&  & \quad \Psi_{0lk}(\xi_1,\xi_1\eta_1)\Psi_{0lk}(\xi_2,\xi_2\eta_2)\hat{g}(\xi_1,\xi_1\eta_1)\hat{g}(\xi_2,\xi_2\eta_2)\xi_1\xi_2 d\xi_1d\xi_2 d\eta_1 d\eta_2\\
& = &\int JV(\xi_1,\xi_2,\eta_1,\eta_2)\chi_A(\xi_1,\xi_2,\eta_1,\eta_2) h(\xi_1,\xi_2,\eta_1,\eta_2)d\xi_1 d\xi_2d\eta_1 d\eta_2\\
& =  & \int\int_{V^{-1}(u,v,w,z)}\chi_A h d\mathcal{H}^{\circ} du dv dw dz\\
& = & \int e^{i(xu+yv+tw)} 
\bigg(\sum_{(\xi_1,\xi_2,\eta_1,\eta_2)\in V^{-1}(u,v,w,z)}\frac{|\xi_1\xi_2|^{\frac{d-2}{4}}|K(1,\eta_1)K(1,\eta_2)|^{\frac18}}{JV(\xi_1,\xi_2,\eta_1,\eta_2)}\cdot \\
& & \quad \Psi_{0lk}(\xi_1,\xi_1\eta_1) \Psi_{0lk}(\xi_2,\xi_2\eta_2)\hat{g}(\xi_1,\xi_1\eta_1)\hat{g}(\xi_2,\xi_2\eta_2) \chi_A(\xi_1,\xi_2,\eta_1,\eta_2)\xi_1\xi_2 \bigg)\\
& & \qquad dudvdwdz\\
& =  & \int e^{i(xu+yv+tw)} L(u,v,w) dudvdw,
\end{eqnarray*}
where 
\begin{eqnarray*}
& & L(u,v,w)=\\
& & \int \bigg(\sum_{(\xi_1,\xi_2,\eta_1,\eta_2)\in V^{-1}(u,v,w,z)}\frac{|\xi_1\xi_2|^{\frac{d-2}{4}}|K(1,\eta_1)K(1,\eta_2)|^{\frac18}}{JV(\xi_1,\xi_2,\eta_1,\eta_2)} 
\Psi_{0lk}(\xi_1,\xi_1\eta_1)\cdot \\
& &  \qquad \Psi_{0lk}(\xi_2,\xi_2\eta_2)\hat{g}(\xi_1,\xi_1\eta_1)\hat{g}(\xi_2,\xi_2\eta_2)\chi_A (\xi_1,\xi_2,\eta_1,\eta_2)\xi_1\xi_2 \bigg)dz
\end{eqnarray*}
Thus, recalling \eqref{eq*3} in Lemma \ref{lemma2.2}, it suffices to show
$$\left\|\int e^{i(xu+yv+yw)}L(u,v,w)dudvdw\right\|_{L^2_{xyt}}\leq C\|g\|^2_{L^2_{xy}},$$
with $C$ independent of $k,l$ and $a.$ By Plancherel's theorem, this is equivalent to
\begin{eqnarray*}
& & \bigg \|\int\bigg(\sum_{V^{-1}(u,v,w,z)}\frac{|\xi_1\xi_2|^{\frac{d-2}{4}}|K(1,\eta_1)K(1,\eta_2)|^{\frac18}}{JV(\xi_1,\xi_2,\eta_1,\eta_2)} \Psi_{0lk}(\xi_1,\xi_1\eta_1)\cdot \\
& & \quad\Psi_{0lk}(\xi_2,\xi_2\eta_2)\hat{g}(\xi_1,\xi_1\eta_1)\hat{g}(\xi_2,\xi_2\eta_2)\chi_A(\xi_1,\xi_2,\eta_1,\eta_2)\xi_1\xi_2  \bigg)dz\bigg\|_{L^2_{uvw}}\\
& & \leq C\|g\|^2_{L^2_{xy}}.
\end{eqnarray*}
Now, using Cauchy-Schwarz in the sum, followed by Cauchy-Schwarz in the $z$-\newline integral
gives us
\begin{eqnarray*}
& & \bigg|\int \bigg(\sum_{V^{-1}(u,v,w,z)}\frac{|\xi_1\xi_2|^{\frac{d-2}{4}}|K(1,\eta_1)K(1,\eta_2)|^{\frac18}}{JV(\xi_1,\xi_2,\eta_1,\eta_2)} \Psi_{0lk}(\xi_1,\xi_1\eta_1)\cdot \\
& & \quad \Psi_{0lk}(\xi_2,\xi_2\eta_2)\hat{g}(\xi_1,\xi_1\eta_1)\hat{g}(\xi_2,\xi_2\eta_2)\chi_A(\xi_1,\xi_2,\eta_1,\eta_2)\xi_1\xi_2 \bigg)dz\bigg|^2 \\
& & \leq \Bigg(\int\left(\sum_{V^{-1}(u,v,w,z)}\frac{|\hat{g}(\xi_1,\xi_1\eta_1)\hat{g}(\xi_2,\xi_2\eta_2)|^2}{|JV(\xi_1,\xi_2,\eta_1,\eta_2)|}\chi_A(\xi_1,\xi_2,\eta_1,\eta_2) |\xi_1\xi_2|\right)^{\frac12}\cdot\\
& & \bigg(\sum_{V^{-1}(u,v,w,z)}\frac{|K(1,\eta_1)K(1,\eta_2)|^{\frac14}|\xi_1\xi_2|^{\frac{d-2}{2}}}{|JV(\xi_1,\xi_2,\eta_1,\eta_2)|}\Psi_{0lk}(\xi_1,\xi_1\eta_1)^2\Psi_{0lk}(\xi_2,\xi_2\eta_2)^2\cdot \\
& & \qquad  \chi_A (\xi_1,\xi_2,\eta_1,\eta_2) |\xi_1\xi_2|\bigg)^{\frac12}dz\Bigg)^2\\
& & \leq \left(\int\sum_{V^{-1}(u,v,w,z)}\frac{|\hat{g}(\xi_1,\xi_1\eta_1)\hat{g}(\xi_2,\xi_2\eta_2)|^2}{|JV(\xi_1,\xi_2,\eta_1,\eta_2)|}\chi_A(\xi_1,\xi_2,\eta_1,\eta_2) |\xi_1\xi_2|dz\right)\cdot\\
& & \Bigg(\int \sum_{V^{-1}(u,v,w,z)}\frac{|K(1,\eta_1)K(1,\eta_2)|^{\frac14}|\xi_1\xi_2|^{\frac{d-2}{2}}}{|JV(\xi_1,\xi_2,\eta_1,\eta_2)|}\Psi_{0lk}(\xi_1,\xi_1\eta_1)^2\Psi_{0lk}(\xi_2,\xi_2\eta_2)^2\cdot\\
& & \qquad\chi_A(\xi_1,\xi_2,\eta_1,\eta_2) |\xi_1\xi_2|  dz\Bigg).
\end{eqnarray*}

Then
\begin{eqnarray}
& & \|L\|^2_{L^2_{uvw}}\label{eq*50}\\
& & \leq \left(\int\sum_{V^{-1}(u,v,w,z)}\frac{|\hat{g}(\xi_1,\xi_1\eta_1)\hat{g}(\xi_2,\xi_2\eta_2)|^2}{|JV(\xi_1,\xi_2,\eta_1,\eta_2)|}\chi_A(\xi_1,\xi_2,\eta_1,\eta_2) |\xi_1\xi_2|dudvdwdz\right)\cdot\nonumber\\
& & \sup_{u,v,w}\bigg(\int\chi_{V(A)}(u,v,w,z)
\sum_{V^{-1}(u,v,w,z)}\frac{|K(1,\eta_1)K(1,\eta_2)|^{\frac14}|\xi_1\xi_2|^{\frac{d-2}{2}}}{|JV(\xi_1,\xi_2,\eta_1,\eta_2)|}\cdot \nonumber\\
& & \qquad\qquad \Psi_{0lk}(\xi_1,\xi_1\eta_1)^2\Psi_{0lk}(\xi_2,\xi_2\eta_2)^2\chi_A(\xi_1,\xi_2,\eta_1,\eta_2)|\xi_1\xi_2| dz\bigg).\nonumber
\end{eqnarray}

By Proposition \ref{prop2.5} we have, for fixed $(u_0,v_0,w_0),$ 
 \begin{eqnarray*}
 &&  \int_{V_4((A(u_0,v_0,w_0))}\sum_{V^{-1}(u_0,v_0,w_0,z)}\frac{|K(\xi_1,\xi_1\eta_1)K(\xi_2,\xi_2\eta_2)|^{\frac14}}{|JV(\xi_1,\xi_2,\eta_1,\eta_2)|}\Psi_{0lk}(\xi,\xi_1\eta_1)^2\cdot  \\
 & & \qquad\qquad \qquad\qquad \Psi_{0lk}(\xi_2,\xi_2\eta_2)^2 \chi_A(\xi_1,\xi_2,\eta_1,\eta_2) |\xi_1\xi_2| dz \leq C,
 \end{eqnarray*}
 with $C$ independent of
 $a,k,l,u_0,v_0,w_0,$ depending only on $d$ and $\epsilon.$
 
 It follows that 
 \begin{eqnarray*}
 & & \sup_{u,v,w}\bigg(\int\chi_{V(A)}(u,v,w,z)
\sum_{V^{-1}(u,v,w,z)}\frac{|K(1,\eta_1)K(1,\eta_2)|^{\frac14}|\xi_1\xi_2|^{\frac{d-2}{2}}}{|JV(\xi_1,\xi_2,\eta_1,\eta_2)|}\cdot \\
& & \qquad\qquad \Psi_{0lk}(\xi_1,\xi_1\eta_1)^2\Psi_{0lk}(\xi_2,\xi_2\eta_2)^2\chi_A(\xi_1,\xi_2,\eta_1,\eta_2)|\xi_1\xi_2|  dz\bigg)\\
& & \leq C,
\end{eqnarray*}
with $C$ independent of $a,k$ and $l.$ If we use this in \eqref{eq*50}, together with the fact that 
\begin{eqnarray*}
& & \int\sum_{V^{-1}(u,v,w,z)}\frac{|\hat{g}(\xi_1,\xi_1\eta_1)\hat{g}(\xi_2,\xi_2\eta_2)|^2}{|JV(\xi_1,\xi_2,\eta_1,\eta_2)|}\chi_A(\xi_1,\xi_2,\eta_1,\eta_2)|\xi_1\xi_2|dudvdwdz\\
& =& \int |\hat{g}(\xi_1,\xi_1\eta_1)\hat{g}(\xi_2,\xi_2\eta_2)|^2 \chi_A(\xi_1,\xi_2,\eta_1,\eta_2) |\xi_1\xi_2|d\xi_1 d\xi_2 d\eta_1 d\eta_2\\
&\leq  & \|g\|_{L^2_{xy}}^4,
\end{eqnarray*}
 we obtain
$$\|L\|_{L^2_{uvw}}\leq C\|g\|_{L^2_{xy}}^2,$$
with $C$ independent of $a,k$ and $l,$ as we needed.

 \end{proof}

We now turn to the proof of Proposition \ref{prop2.5}. 

\subsection{Proof of Proposition \ref{prop2.5}}

By a similar argument to that given in the proof of Lemma
\ref{lemma2.5}, each $A(u_0,v_0,w_0)$ is defined by a finite set of polynomial inequalities and so has at most $\tilde{N}(d)$ path-connected components. We write 
$$A(u_0,v_0,w_0)=\bigcup_{r=1}^{\tilde{N}(d)} A_r (u_0,v_0,w_0),$$
where each $A_r (u_0,v_0,w_0)$ is path-connected. Then we also have
$$V_4(A(u_0,v_0,w_0))=\bigcup_{r=1}^{\tilde{N}(d)}V_4(A_r(u_0,v_0,w_0)).$$
Since $V_4$ is continuous and each $A_r(u_0,v_0,w_0)$ is connected, it follows that each $V_4(A_r(u_0,v_0,w_0))$ is connected and hence is an interval. We note that these intervals need not be disjoint.
Then
\begin{eqnarray*}
 & & \int_{V_4((A(u_0,v_0,w_0))}\sum_{V^{-1}(u_0,v_0,w_0,z)}\frac{|K(\xi_1,\xi_1\eta_1)K(\xi_2,\xi_2\eta_2)|^{\frac14}}{|JV(\xi_1,\xi_2,\eta_1,\eta_2)|}\Psi_{0lk}(\xi_1,\xi_1\eta_1)^2\cdot  \\
 & &\qquad\qquad\qquad \Psi_{0lk}(\xi_2,\xi_2\eta_2)^2  \chi_A(\xi_1,\xi_2,\eta_1,\eta_2)|\xi_1\xi_2|  dz \leq \\
 &  & \sum_{r=1}^{\tilde{N}(d)}\int_{V_4(A_r (u_0,v_0,w_0))}\sum_{V^{-1}(u_0,v_0,w_0,z)}\frac{|K(\xi_1,\xi_1\eta_1)K(\xi_2,\xi_2\eta_2)|^{\frac14}}{|JV(\xi_1,\xi_2,\eta_1,\eta_2)|}\Psi_{0lk}(\xi_1,\xi_1\eta_1)^2\cdot  \\
& & \qquad\qquad\qquad \Psi_{0lk}(\xi_2,\xi_2\eta_2)^2  \chi_A(\xi_1,\xi_2,\eta_1,\eta_2)|\xi_1\xi_2| dz,
 \end{eqnarray*}
 and so it suffices to show that
 \begin{eqnarray}
 & & \int_{V_4(A_r (u_0,v_0,w_0))}\sum_{V^{-1}(u_0,v_0,w_0,z)}\frac{|K(\xi_1,\xi_1\eta_1)K(\xi_2,\xi_2\eta_2)|^{\frac14}}{|JV(\xi_1,\xi_2,\eta_1,\eta_2)|}\Psi_{0lk}(\xi_1,\xi_1\eta_1)^2\cdot \label{eq*20}\\
& & \qquad\qquad\Psi_{0lk}(\xi_2,\xi_2\eta_2)^2\chi_A(\xi_1,\xi_2,\eta_1,\eta_2) |\xi_1\xi_2|dz\leq C,\nonumber
\end{eqnarray}
with $C$ independent of $a,k, l, u_0,v_0$ and $w_0.$

In order to prove \eqref{eq*20} we first show that $V_4$ is one-to-one on each $A_r(u_0,v_0,w_0).$ This requires a couple of lemmas.

\begin{lemma}\label{lemma2.7}
Each $A_r (u_0,v_0,w_0)$ is a connected one-dimensional $C^\infty$ manifold.
\end{lemma}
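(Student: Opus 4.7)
The plan is to deduce the conclusion from the regular value theorem applied to the map $\tilde{V} := (V_1, V_2, V_3)$ restricted to the open set $A = A^s_{m,n,p,j}(k,l,a)$.

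First I would observe that the nonvanishing of $JV$ on $A$, already established before Lemma \ref{lemma2.3} in \eqref{eq*13}, is precisely the statement that the $4\times 4$ Jacobian matrix $D V(\xi_1,\xi_2,\eta_1,\eta_2)$ is nonsingular at every point of $A$. Since a $4\times 4$ matrix with nonzero determinant has four linearly independent rows, any three of them are linearly independent as well; in particular the $3\times 4$ matrix $D\tilde{V}(\xi_1,\xi_2,\eta_1,\eta_2)$ obtained by dropping the gradient of $V_4$ has rank $3$ everywhere on $A$. Thus $\tilde{V}:A\to\mathbb{R}^3$ is a $C^\infty$ submersion, and every value in its image is a regular value.

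Next I would invoke the regular value theorem (equivalently, the implicit function theorem): since $(u_0,v_0,w_0)$ is a regular value of $\tilde{V}$, the level set
\[
A(u_0,v_0,w_0) = \tilde{V}^{-1}(u_0,v_0,w_0) \cap A
\]
is a $C^\infty$ embedded submanifold of $A\subset\mathbb{R}^4$ of dimension $4-3=1$.

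Finally, each $A_r(u_0,v_0,w_0)$ is, by definition, a path-connected component of $A(u_0,v_0,w_0)$. Since $A(u_0,v_0,w_0)$ is a $1$-dimensional $C^\infty$ manifold it is locally Euclidean and hence locally path-connected; therefore its path components are open in $A(u_0,v_0,w_0)$ and coincide with its connected components. An open subset of a $C^\infty$ manifold is itself a $C^\infty$ manifold of the same dimension, and path-connectedness implies connectedness, so each $A_r(u_0,v_0,w_0)$ is a connected one-dimensional $C^\infty$ manifold, as claimed.

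There is no real obstacle here: the only mildly subtle point is the passage from nonvanishing of the full $4\times 4$ Jacobian $JV$ to the submersion property of $\tilde{V}$, and this is immediate from linear algebra. The main content of the lemma is thus the bookkeeping needed to package the earlier estimate \eqref{eq*13} into the language of manifolds, so that in the next step one may legitimately speak of $V_4$ restricted to a $1$-manifold and analyze its injectivity.
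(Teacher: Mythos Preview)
Your argument is correct and is essentially the same as the paper's: both use the nonvanishing of $JV$ on $A$ (from \eqref{eq*13}) to obtain the manifold structure of the level set, with the paper constructing charts explicitly via the Inverse Function Theorem and $V_4$ as local coordinate, while you invoke the regular value theorem for $\tilde{V}=(V_1,V_2,V_3)$. These are equivalent packagings of the same idea, and your treatment of the path-components is slightly more explicit than the paper's.
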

 
\begin{proof}
It is routine to show that each $A(u_0,v_0,w_0)$ is a one-dimensional $C^\infty$ manifold, using the fact that $V$ is locally one-to-one (Lemma \ref{lemma2.3}). 
Let $\overline{x}\in A(u_0,v_0,w_0).$ Then, by the Inverse Function Theorem, there is an open set $U_{\overline{x}}$ such that $V:U_{\overline{x}}\rightarrow V(U_{\overline{x}})$ is a diffeomorphism. Then we can find an open cube $Q_{V(\overline{x})},$ with centre $V(\overline{x})=(u_0,v_0,w_0,V_4 (\overline{x})),$ such that $Q_{V(\overline{x})}\subseteq V(U_{\overline{x}}).$  Then 
$V^{-1}(Q_{V(\overline{x})})\cap A(u_0,v_0,w_0)$ is open in $A(u_0,v_0,w_0).$ We define
$\phi_{\overline{x}}: V^{-1}(Q_{V(\overline{x})})\cap A(u_0,v_0,w_0)\rightarrow {\mathbb R}$ by $\phi_{\overline{x}}(x)=V_4(x).$ Then it is easily checked that \newline
$\{V^{-1}(Q_{V(\overline{x})})\cap A(u_0,v_0,w_0),\phi_{\overline{x}}\}_{\overline{x}\in A(u_0,v_0,w_0)}$ is a chart for $A(u_0,v_0,w_0).$ Furthermore,  $\{V^{-1}(Q_{V(\overline{x})})\cap A_r(u_0,v_0,w_0), \phi_{\overline{x}}\}_{\overline{x}\in A_r(u_0,v_0,w_0)}$ gives a chart for \newline $A_r(u_0,v_0,w_0),$ and so each connected component, $A_r(u_0,v_0,w_0),$ of $A(u_0,v_0,w_0)$ is also a one-dimensional connected $C^{\infty} $ manifold.

\end{proof}

\begin{lemma}\label{lemma2.8}
Given distinct points $(\xi_1^0,\xi_2^0,\eta_1^0,\eta_2^0), (\overline{\xi}_1,\overline{\xi}_2,\overline{\eta}_1,\overline{\eta}_2)\in A_r(u_0,v_0,w_0),$ there is a continuous one-to-one map
$\varphi:[0,1]\rightarrow A_r(u_0,v_0,w_0)$ such that  $\varphi(0)=(\xi_1^0,\xi_2^0,\eta_1^0,\eta_2^0)$ and $\varphi(1)= (\overline{\xi}_1,\overline{\xi}_2,\overline{\eta}_1,\overline{\eta}_2).$
\end{lemma}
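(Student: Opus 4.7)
The plan is to invoke the classical classification theorem for connected one-dimensional smooth manifolds. By Lemma~\ref{lemma2.7}, $A_r(u_0,v_0,w_0)$ is a connected, one-dimensional $C^\infty$ manifold, and as a subset of $\mathbb{R}^4$ with the induced topology it is automatically Hausdorff and second countable. The classical classification theorem (see, for example, the appendix of Milnor's \emph{Topology from the Differentiable Viewpoint}) then guarantees that $A_r(u_0,v_0,w_0)$ is diffeomorphic either to $\mathbb{R}$ or to $S^1$.

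Given such a diffeomorphism $\Phi : A_r(u_0,v_0,w_0) \to M$, set
\[
p = \Phi(\xi_1^0,\xi_2^0,\eta_1^0,\eta_2^0), \qquad q = \Phi(\overline{\xi}_1,\overline{\xi}_2,\overline{\eta}_1,\overline{\eta}_2).
\]
Since the two source points are distinct, $p \neq q$. When $M = \mathbb{R}$, the straight-line segment from $p$ to $q$ furnishes an injective continuous path; when $M = S^1$, either of the two circular arcs joining $p$ to $q$ is injective. In both cases, pulling back through $\Phi^{-1}$ and reparametrizing by $[0,1]$ yields the required map $\varphi : [0,1] \to A_r(u_0,v_0,w_0)$ with $\varphi(0)=(\xi_1^0,\xi_2^0,\eta_1^0,\eta_2^0)$ and $\varphi(1)=(\overline{\xi}_1,\overline{\xi}_2,\overline{\eta}_1,\overline{\eta}_2)$.

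The only step that needs a moment of care is verifying the hypotheses of the classification theorem. Hausdorffness and second countability come for free from the ambient $\mathbb{R}^4$; the $C^\infty$ manifold structure was produced in Lemma~\ref{lemma2.7} via the Inverse Function Theorem (using that $JV$ is nonzero on $A$); and the absence of boundary is built into those local charts, since they are defined on open pieces of $A_r$ and take values in open intervals of $\mathbb{R}$ through the component function $V_4$. I do not anticipate any genuine obstacle beyond invoking the classification cleanly; the real content of Lemma~\ref{lemma2.8} is morally contained in Lemma~\ref{lemma2.7}, which upgrades Corollary~\ref{alggeom}'s mere path-connectedness to one-dimensional manifold structure, thereby allowing us to replace an arbitrary path by an embedded arc.
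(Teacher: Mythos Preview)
Your proposal is correct and follows essentially the same route as the paper: both invoke Lemma~\ref{lemma2.7}, apply the classification of connected one-dimensional $C^\infty$ manifolds from Milnor's appendix, and then construct the injective path explicitly in the model space ($\R$ or $S^1$) before pulling back via the diffeomorphism. Your added remarks on Hausdorffness, second countability, and absence of boundary are a welcome bit of hygiene that the paper leaves implicit.
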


\begin{proof}
Since $A_r(u_0,v_0,w_0)$ is a connected one-dimensional $C^\infty$ manifold (by Lemma \ref{lemma2.7}),
it is homeomorphic (in fact diffeomorphic) to either an interval or the unit circle $S^1.$ (See \cite{milnor2}, Appendix p.55).

If $A_r(u_0,v_0,w_0)$ is homeomorphic to an interval then we let $I$ be an interval and 
$f:A_r(u_0,v_0,w_0)\rightarrow I$ be a continuous bijection with continuous inverse.
Then we define $\psi:[0,1]\rightarrow I$ by 
$\psi(t)=(1-t)f(\xi_1^0,\xi_2^0,\eta_1^0,\eta_2^0)+tf(\overline{\xi}_1,\overline{\xi}_2,\overline{\eta}_1,\overline{\eta}_2).$ We let $\varphi(t)=f^{-1}(\psi(t)).$ 

If $A_r(u_0,v_0,w_0)$ is homeomorphic to $S^1$ then we let $g:A_r(u_0,v_0,w_0)\rightarrow S^1$ be a continuous bijection with continuous inverse.
Then we define $\tilde{\psi}:[0,1]\rightarrow S^1$ by $\tilde{\psi}(t)=g(\xi_1^0,\xi_2^0,\eta_1^0,\eta_2^0)^{1-t}g(\overline{\xi}_1,\overline{\xi}_2,\overline{\eta}_1,\overline{\eta}_2)^{t}.$   We let 
$\varphi(t)=g^{-1}(\tilde{\psi}(t)).$ 

In both cases, it is easily checked that 
$\varphi$ is a continuous one-to-one map into $A_r(u_0,v_0,w_0),$  with  $\varphi(0)=(\xi_1^0,\xi_2^0,\eta_1^0,\eta_2^0)$ and $\varphi(1)= (\overline{\xi}_1,\overline{\xi}_2,\overline{\eta}_1,\overline{\eta}_2).$
\end{proof}

\begin{proposition}\label{prop2.9}
$V_4$ is one-to-one on each $A_r(u_0,v_0,w_0).$ 
\end{proposition}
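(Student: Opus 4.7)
The plan is to argue by contradiction using the injective path from Lemma~\ref{lemma2.8} together with the local one-to-one property of $V$ established in Lemma~\ref{lemma2.3}. Suppose distinct $p_0, p_1 \in A_r(u_0,v_0,w_0)$ satisfy $V_4(p_0) = V_4(p_1) = z_0$, and choose a continuous injective $\varphi:[0,1]\to A_r(u_0,v_0,w_0)$ with $\varphi(0)=p_0$ and $\varphi(1)=p_1$. Since $V_1,V_2,V_3$ are identically $u_0,v_0,w_0$ on $A(u_0,v_0,w_0)$, the behaviour of $V\circ\varphi$ is encoded by the single continuous function $f(t):=V_4(\varphi(t))$, which satisfies $f(0)=f(1)=z_0$.

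I would next locate, in every case, an interior point $t^*\in(0,1)$ and parameters $t_1\ne t_2$ arbitrarily close to $t^*$ with $f(t_1)=f(t_2)$. If $f$ is not identically $z_0$, then without loss of generality $f$ attains a value $M>z_0$ at some interior $t^*$ (if $f$ only takes values $\le z_0$, replace max by min by a symmetric argument). Either $f$ is locally constant at $t^*$, in which case any nearby pair $t_1\ne t_2$ suffices, or else an application of the intermediate value theorem on either side of $t^*$ produces $t_1<t^*<t_2$ near $t^*$ with $f(t_1)=f(t_2)$ slightly less than $M$. If on the other hand $f\equiv z_0$, then any two distinct $t_1,t_2$ arbitrarily close to any chosen interior $t^*$ work.

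Writing $p^*=\varphi(t^*)$, Lemma~\ref{lemma2.3} supplies $\epsilon_0>0$ such that $V$ is one-to-one on $B(p^*,\epsilon_0)$, and by continuity of $\varphi$ I may shrink the $t_i$ neighbourhood so that $\varphi(t_1),\varphi(t_2)\in B(p^*,\epsilon_0)$. Injectivity of $\varphi$ gives $\varphi(t_1)\ne\varphi(t_2)$, while the relation $V(\varphi(t_i))=(u_0,v_0,w_0,f(t_i))$ forces $V(\varphi(t_1))=V(\varphi(t_2))$, contradicting the local injectivity of $V$ on $B(p^*,\epsilon_0)$. Hence no such $p_0,p_1$ exist, and $V_4$ is one-to-one on $A_r(u_0,v_0,w_0)$.

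The only delicate point in this scheme is the potential local flatness of $f$ at its extremum, which is precisely why the case split above is needed; both branches funnel into the same local contradiction via Lemma~\ref{lemma2.3}, and no smoothness of $\varphi$ or use of the classification of one-dimensional manifolds (as in Lemma~\ref{lemma2.7}) is needed beyond what is already packaged into Lemma~\ref{lemma2.8}.
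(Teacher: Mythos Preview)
Your proof is correct and follows essentially the same approach as the paper: both assume two distinct points with equal $V_4$-value, invoke Lemma~\ref{lemma2.8} to produce an injective path $\varphi$ into $A_r(u_0,v_0,w_0)$, and then derive a contradiction from the local injectivity of $V$ (Lemma~\ref{lemma2.3}) applied to $V_4\circ\varphi$. The only difference is cosmetic: the paper packages the final step as the general fact that a continuous, locally one-to-one real-valued function on an interval is globally one-to-one, whereas you unpack this by locating an interior extremum of $f=V_4\circ\varphi$ and using the intermediate value theorem to produce nearby $t_1\neq t_2$ with $f(t_1)=f(t_2)$; both routes yield the same local contradiction.
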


\begin{proof}
By Lemma \ref{lemma2.3}, $V$ is locally one-to-one on $A.$ Thus, if  $(\xi_1,\xi_2,\eta_1,\eta_2)$  $ \in A_r(u_0,v_0,w_0),$   then there is some $\epsilon_0>0$ such that $V$ is one-to-one on \newline $B((\xi_1,\xi_2,\eta_1,\eta_2),\epsilon_0)\cap A.$ Moreover,
$V_1(\xi_1,\xi_2,\eta_1,\eta_2)  =  u_0;
V_2(\xi_1,\xi_2,\eta_1,\eta_2)  =  v_0;$\newline $
V_3(\xi_1,\xi_2,\eta_1,\eta_2)  =  w_0.$
If $V_4(\xi'_1,\xi'_2,\eta'_1,\eta'_2)=V_4(\xi''_1,\xi''_2,\eta''_1,\eta''_2)$ for 
$(\xi'_1,\xi'_2,\eta'_1,\eta'_2),$ \newline $(\xi''_1,\xi''_2,\eta''_1,\eta''_2) \in $ $ B((\xi_1,\xi_2,\eta_1,\eta_2),\epsilon_0)\cap A(u_0,v_0,w_0),$ then \newline $V(\xi'_1,\xi'_2,\eta'_1,\eta'_2)=V(\xi''_1,\xi''_2,\eta''_1,\eta''_2)$  and hence $(\xi'_1,\xi'_2,\eta'_1,\eta'_2)=(\xi''_1,\xi''_2,\eta''_1,\eta''_2).$ Thus $V_4$ is locally one-to-one on  $A_r(u_0,v_0,w_0).$ 
We note that $A_r(u_0,v_0,w_0)$ is connected and $V_4$ is continuous, so $V_4(A_r(u_0,v_0,w_0))$ is an interval, call it ${\mathcal J}_r.$ Note that ${\mathcal J}_r$ has non-empty interior since $V_4$ is locally one-to-one on $A_r(u_0,v_0,w_0)$ and $A_r(u_0,v_0,w_0)$ is a one-dimensional manifold. Suppose, for contradiction,  that $V_4$ is not one-to-one on $A_r(u_0,v_0,w_0).$ Since ${\mathcal J}_r$ has non-empty interior, $A_r(u_0,v_0,w_0)$ must contain more than one point and so we take distinct points
$(\xi'_1,\xi'_2,\eta'_1,\eta'_2)$ and $  (\xi''_1,\xi''_2,\eta''_1,\eta''_2)$  in $A_r(u_0,v_0,w_0)
$ with $V_4(\xi'_1,\xi'_2,\eta'_1,\eta'_2)=V_4(\xi''_1,\xi''_2,\eta''_1,\eta''_2).$ Then, by Lemma \ref{lemma2.8} there is a continuous one-to-one map $\varphi:[0,1]\rightarrow A_r(u_0,v_0,w_0)$ such that $\varphi(0)=(\xi'_1,\xi'_2,\eta'_1,\eta'_2)$ and $\varphi(1)=(\xi''_1,\xi''_2,\eta''_1,\eta''_2).$ 
We define now the map $\nu:[0,1]\rightarrow V_4(A_r(u_0,v_0,w_0))$ by $\nu(t)=V_4(\varphi(t)).$ 
Then, since $V_4$ is locally one-to-one and continuous  and $\varphi$ is one-to-one and continuous, it is readily seen that
$\nu$ is locally one-to-one and continuous. Moreoever, any real-valued continuous, locally one-to-one map defined on an interval is necessarily globally one-to-one. Thus $\nu$ is one-to-one and so  $\nu(1)\neq \nu(0).$ This is a contradiction since $\nu(0)=V_4(\xi'_1,\xi'_2,\eta'_1,\eta'_2)$ and $\nu(1)=V_4(\xi''_1,\xi''_2,\eta''_1,\eta''_2).$ 

\end{proof}

We now return to \eqref{eq*20}. We note that there are at most $\tilde{N}(d)$ $\tilde{r}'$s such that 
$V_4(A_r (u_0,v_0,w_0))\cap V_4(A_{\tilde{r}}(u_0,v_0,w_0))\neq \emptyset$ and, since 
$V_4$ is one-to-one on \newline $A_{\tilde{r}}(u_0,v_0,w_0),$ given $z\in V_4(A_{\tilde{r}}(u_0,v_0,w_0)),$ there is at most one $(\xi_1^{\tilde{r}},\xi_2^{\tilde{r}},\eta_1^{\tilde{r}},\eta_2^{\tilde{r}})$  $\in A_{\tilde{r}}(u_0,v_0,w_0)$ such that 
$V_4(\xi_1^{\tilde{r}},\xi_2^{\tilde{r}},\eta_1^{\tilde{r}},\eta_2^{\tilde{r}})=z.$ Moreover, this
point exists for $\tilde{r}\neq r$ only if $z\in
V_4(A_{\tilde{r}}(u_0,v_0,w_0))\cap V_4(A_r (u_0,v_0,w_0)).$ Then,
letting \newline $I_r=V_4(A_r(u_0,v_0,w_0)),$ we have
\begin{eqnarray*}
&&  \int_{I_r}\sum_{V^{-1}(u_0,v_0,w_0,z)}
  \frac{|K(\xi_1,\xi_1\eta_1)K(\xi_2,\xi_2\eta_2)|^{\frac14}}{|JV(\xi_1,\xi_2,\eta_1,\eta_2)|} \Psi_{0lk}(\xi_1,\xi_1\eta_1)^2
\cdot\\
& &\qquad\qquad\qquad\qquad \Psi_{0lk}(\xi_2,\xi_2\eta_2)^2\chi_A(\xi_1,\xi_2,\eta_1,\eta_2) |\xi_1\xi_2|dz\\
& & = \int_{I_r}\sum_{V^{-1}(u_0,v_0,w_0,z)\cap A}  \frac{|K(\xi_1,\xi_1\eta_1)K(\xi_2,\xi_2\eta_2)|^{\frac14}}{|JV(\xi_1,\xi_2,\eta_1,\eta_2)|} \cdot \\
& & \qquad\qquad\qquad\qquad\qquad \Psi_{0lk}(\xi_1,\xi_1\eta_1)^2\Psi_{0lk}(\xi_2,\xi_2\eta_2)^2|\xi_1\xi_2| dz\\
& & =\int_{I_r}\bigg( \sum_{V^{-1}(u_0,v_0,w_0,z)\cap \left( \bigcup_{\tilde{r}}A_{\tilde{r}}(u_0,v_0,w_0)\right) }
 \frac{|K(\xi_1,\xi_1\eta_1)K(\xi_2,\xi_2\eta_2)|^{\frac14}}{|JV(\xi_1,\xi_2,\eta_1,\eta_2)|}\cdot \\
 & & \qquad\qquad \qquad  \qquad \qquad \Psi_{0lk}(\xi_1,\xi_1\eta_1)^2\Psi_{0lk}(\xi_2,\xi_2\eta_2)^2|\xi_1\xi_2|\bigg)dz\\
& & \leq \int_{I_r}\bigg(  \sum_{\tilde{r}}\sum_{V^{-1}(u_0,v_0,w_0,z)\cap A_{\tilde{r}}(u_0,v_0,w_0)}
\frac{|K(\xi_1,\xi_1\eta_1)K(\xi_2,\xi_2\eta_2)|^{\frac14}}{|JV(\xi_1,\xi_2,\eta_1,\eta_2)|} \cdot \\
& & \qquad\qquad\qquad\qquad\qquad \Psi_{0lk}(\xi_1,\xi_1\eta_1)^2\Psi_{0lk}(\xi_2,\xi_2\eta_2)^2 |\xi_1\xi_2|\bigg)  dz\\
& & = \sum_{\tilde{r}}\bigg( \int_{I_r}\sum_{V^{-1}(u_0,v_0,w_0,z)\cap
  A_{\tilde{r}}(u_0,v_0,w_0)}\frac{|K(\xi_1,\xi_1\eta_1)K(\xi_2,\xi_2\eta_2)|^{\frac14}}{|JV(\xi_1,\xi_2,\eta_1,\eta_2)|}\cdot
  \\
& & \qquad\qquad\qquad\qquad\qquad  \Psi_{0lk}(\xi_1,\xi_1\eta_1)^2\Psi_{0lk}(\xi_2,\xi_2\eta_2)^2|\xi_1\xi_2| dz\bigg) 
\end{eqnarray*}
\begin{eqnarray*}
& & = \sum_{\tilde{r}}\bigg( \int_{I_r\cap I_{\tilde{r}}}\sum_{V^{-1}(u_0,v_0,w_0,z)\cap A_{\tilde{r}}(u_0,v_0,w_0)}\frac{|K(\xi_1,\xi_1\eta_1)K(\xi_2,\xi_2\eta_2)|^{\frac14}}{|JV(\xi_1,\xi_2,\eta_1,\eta_2)|} \cdot \\
& & \qquad\qquad\qquad\qquad\qquad \Psi_{0lk}(\xi_1,\xi_1\eta_1)^2\Psi_{0lk}(\xi_2,\xi_2\eta_2)^2
 |\xi_1\xi_2| dz\bigg)\\
 & & \leq  \sum_{\tilde{r}}\bigg( \int_{I_{\tilde{r}} }\bigg(
  \sum_{V^{-1}(u_0,v_0,w_0,z)\cap A_{\tilde{r}}(u_0,v_0,w_0)}
  \frac{|K(\xi_1,\xi_1\eta_1)K(\xi_2,\xi_2\eta_2)|^{\frac14}}{|JV(\xi_1,\xi_2,\eta_1,\eta_2)|}\cdot \\
& & \qquad\qquad\qquad\qquad\qquad  \Psi_{0lk}(\xi_1,\xi_1\eta_1)^2\Psi_{0lk}(\xi_2,\xi_2\eta_2)^2|\xi_1\xi_2|\bigg)
 dz\bigg)\\
 & & =\sum_{\tilde{r}}\int_{I_{\tilde{r}}}
\frac{|K(\xi^{\tilde{r}}_1,\xi^{\tilde{r}}_1\eta^{\tilde{r}}_1)K(\xi^{\tilde{r}}_2,\xi^{\tilde{r}}_2\eta^{\tilde{r}}_2)|^{\frac14}}{|JV(\xi^{\tilde{r}}_1,\xi^{\tilde{r}}_2,\eta^{\tilde{r}}_1, \eta^{\tilde{r}}_2)|}
 \Psi_{0lk}(\xi^{\tilde{r}}_1,\xi^{\tilde{r}}_1\eta^{\tilde{r}}_1)^2\Psi_{0lk}(\xi^{\tilde{r}}_2,\xi^{\tilde{r}}_2\eta^{\tilde{r}}_2)^2|\xi^{\tilde{r}}_1\xi^{\tilde{r}}_2| dz\\
\end{eqnarray*}
and for each $\tilde{r},$  $V_4:A_{\tilde{r}}(u_0,v_0,w_0)\rightarrow V_4(A_{\tilde{r}}(u_0,v_0,w_0))$ is bijective.
It therefore suffices to show, for all $r,$  
\begin{equation}\label{eq*30}
\int_{V_4(A_r(u_0,v_0,w_0))} \frac{|K(\xi_1,\xi_1\eta_1)K(\xi_2,\xi_2\eta_2)|^{\frac14}}{|JV(\xi_1,\xi_2,\eta_1,\eta_2)|} |\xi_1\xi_2|dz\leq C,
\end{equation}
with $C$ independent of $k,l,a, u_0,v_0$ and $w_0. $ Here $\xi_1,\xi_1,\eta_1,\eta_2$ are uniquely defined in terms of $z,$ by the bijectivity of $V_4.$ 

We observe that, by \eqref{eq*13}, for $(\xi_1,\xi_2,\eta_1,\eta_2)\in A= A^s_{m,n,p,j}(k,l,a),$ using the fact that $1<\lambda<2, 1<\alpha<2,$
\begin{equation}
\label{eq52} c2^{-k}|\eta_2-\eta_1|<|JV(\xi_1,\xi_2,\eta_1,\eta_2)|<C2^{-k}|\eta_2-\eta_1|,
\end{equation}
where $c,C$ depend only on $d$ and $\epsilon.$  

Also, $$\frac{1}{\lambda^{2(d-2)}}2^{-k-1} < |K(\xi_1,\xi_1\eta_1)|=|\xi_1|^{2(d-2)}|K(1,\eta_1)|<\lambda^{4(d-2)}2^{-k+2}$$
 on the region of integration and thus we need only show that 
 \begin{equation}\label{eq*40}
 \int_{V_4(A_r(u_0,v_0,w_0))} \frac{2^{\frac{k}{2}}}{|\eta_2-\eta_1|}dz\leq C,
 \end{equation}
 with $C$ independent of $k,l,a,u_0,v_0$ and $w_0.$  
 
 We now define $L:V_4(A_r(u_0,v_0,w_0))\rightarrow A_r(u_0,v_0,w_0)$ 
 by $L(z)=V_4^{-1}(z).$ Then $V(L(z))=(u_0,v_0,w_0,z)$ 
 and the chain rule gives us $DV(L(z))L'(z)=(0,0,0,1).$ Routine
 calculations give (with $L = (L_1, L_2, L_3, L_4)$)
 \begin{eqnarray}
 \nonumber L_3'(z) & = & \frac{1}{JV(L(z))}\{dL_2(z)[L_2(z)^{d-1}\phi(1,L_4(z))-L_1(z)^{d-1}\phi(1,L_3(z))]\\
 & & \qquad\qquad -L_2(z)^d\phi'(1,L_4(z))(L_4(z)-L_3(z))\} \label{eq*51}\\
\nonumber  L_4'(z) & = & \frac{1}{JV(L(z))}\{-dL_1(z)[L_2(z)^{d-1}\phi(1,L_4(z))-L_1(z)^{d-1}\phi(1,L_3(z))]\\
 & & \qquad \qquad + L_1(z)^d\phi'(1,L_3(z))(L_4(z)-L_3(z))\}.\label{eq*52}
 \end{eqnarray}

  We now define 
 \begin{eqnarray*}
  & & B_1  = \\
  &   & \{(\xi_1,\xi_2,\eta_1,\eta_2) \mid |\xi_2^d(\phi(1,\eta_2)-\phi(1,\eta_1))|\geq 2|(\xi_2^d-\xi_1^d)\phi(1,\eta_1)|\}\\
  &  & \qquad \cap  A_r(u_0,v_0,w_0),\\
 &  &  B_2   = \\
 &    & \{(\xi_1,\xi_2,\eta_1,\eta_2)\mid |\xi_2^d(\phi(1,\eta_2)-\phi(1,\eta_1))|\leq \frac12|(\xi_2^d-\xi_1^d)\phi(1,\eta_1)|\}\\
 & &  \qquad \cap A_r(u_0,v_0,w_0),\\
 & &  B_3   = 
   \{(\xi_1,\xi_2,\eta_1,\eta_2)\mid \frac12< \frac{|\xi_2^d(\phi(1,\eta_2)-\phi(1,\eta_1))|}{|(\xi_2^d-\xi_1^d)\phi(1,\eta_1)|}< 2\}\\
   &  &\qquad  \cap A_r(u_0,v_0,w_0).
 \end{eqnarray*}
 
 Then we observe that $(\xi_1,\xi_2,\eta_1,\eta_2)\in B_1\Longrightarrow $ 
 \begin{eqnarray*}
V_4(\xi_1,\xi_2,\eta_1,\eta_2)=|z| & = &  |(\xi_2^d-\xi_1^d)\phi(1,\eta_1)+\xi_2^d(\phi(1,\eta_2)-\phi(1,\eta_1))|\\
 &\geq & \frac12|\xi_2^d (\phi(1,\eta_2)-\phi(1,\eta_1))|\geq |(\xi_2^d-\xi_1^d)\phi(1,\eta_1)|
 \end{eqnarray*}
 and 
 $(\xi_1,\xi_2,\eta_1,\eta_2)\in B_2 \Longrightarrow$
$$ V_4(\xi_1,\xi_2,\eta_1,\eta_2)= |z|\geq \frac12|(\xi_2^d-\xi_1^d)\phi(1,\eta_1)|\geq |\xi_2^d(\phi(1,\eta_2)-\phi(1,\eta_1))|.$$ 
 
 Moreover, if $(\xi_2^d-\xi_1^d)\phi(1,\eta_1)$ and $\xi_2^d(\phi(\eta_2)-\phi(\eta_1))$ have the same sign then $|z|\geq \max\{|\xi_2^d(\phi(1,\eta_2)-\phi(1,\eta_1))|,|(\xi_2^d-\xi_1^d)\phi(\eta_1)|\}.$ 
 So if we define
 $$
B_0  =  \{(\xi_1,\xi_2,\eta_1,\eta_2)\in A_r(u_0,v_0,w_0) \mid(\xi_2^d-\xi_1^d)\phi(1,\eta_1)\xi_2^d (\phi(1,\eta_2)-\phi(1,\eta_1))\geq 0\}$$
 then
 \begin{equation}\label{eq17}
z\in V_4(B_0\cup B_1\cup B_2) \Longrightarrow |z|\geq \frac12\max\{|\xi_2^d(\phi(1,\eta_2)-\phi(1,\eta_1))|,|\phi(1,\eta_1)(\xi_2^d-\xi_1^d)|\}.
 \end{equation}
We now have the disjoint union
$$A_r(u_0,v_0,w_0)=B_1\cup B_2\cup (B_3\cap B_0)\cup (B_3\cap B_0^c)$$
and hence
\begin{eqnarray}
   \int_{V_4(A_r(u_0,v_0,w_0))}\frac{2^{\frac{k}{2}}}{\eta_2-\eta_1}dz 
 & = &\int_{V_4(B_{1})}\frac{ 2^{\frac{k}{2}} }{\eta_2-\eta_1} dz + \int_{V_4(B_2)}\frac{ 2^{\frac{k}{2}} }{\eta_2-\eta_1}dz\nonumber  \\
 & &  + \int_{V_4(B_3\cap B_0)}\frac{ 2^{\frac{k}{2}} }{\eta_2-\eta_1}dz +\int_{V_4(B_3\cap B_0^c)}\frac{ 2^{\frac{k}{2}} }{\eta_2-\eta_1}dz\label{eq300} .
  \end{eqnarray}

  In order to be able to bound these integrals we would like to know that we are integrating
  over an interval in $z.$ That we are able to reduce to this case is a consequence of Milnor's theorem (Theorem \ref{milnor} and Corollary \ref{alggeom}).  We note that each of our sets $B_1, B_2, B_3\cap B_0, B_3\cap B_0^c$ is defined by a
 bounded number of polynomial inequalities, where the bound is
 independent of $u_0,v_0,$ $w_0$, and the individual polynomials
 (which have uniformly bounded degrees), and depends only on $d.$ 
By Corollary \ref{alggeom} it follows that the number of connected components of each of $B_0, B_1, B_3\cap B_0, B_3\cap B_0^c$  is bounded by some constant depending only on $d$. We consider $B_1,$ which we can write as 
$B_1=\bigcup_{m=1}^{m_0} B_{1,m}$ where each $B_{1,m}$ is connected and $m_0$ depends only on $d.$ Then, for each $m,$ $V_4(B_{1,m})$ is the continuous image of a connected set, hence is an interval. Thus $V_4(B_{1})$ is a union of  boundedly many intervals, with bound depending only $d$, and hence it suffices to work with one of them. Indeed, by slight abuse of notation, we may assume that $V_4(B_1)$ itself is an interval. We can argue similarly for $V_4(B_2), V_4(B_3\cap B_0), V_4(B_3\cap B_0^c).$ 

 We will be able to deal with the first three integrals on the
 right-hand side of \eqref{eq300} using the lower bound \eqref{eq17}. First we consider the integral over $V_4(B_{3}\cap B_0^c).$  Here a finer decomposition is needed. 
 We let 
 \begin{eqnarray*}
   B_{3}^i & =&
 \left\{(\xi_1,\xi_2,\eta_1,\eta_2)\mid 1+ 2^{-i-1}\leq \left|\frac{\xi_2^d(\phi(1,\eta_2)-\phi(1,\eta_1))}{(\xi_2^d-\xi_1^d)\phi(1,\eta_1)}\right| \leq 1+ 2^{-i}\right\}\\
 & & \cap A_r(u_0,v_0,w_0).
\end{eqnarray*}
 and
\begin{eqnarray*} 
  \tilde{B}_{3}^i & = & 
\left\{(\xi_1,\xi_2,\eta_1,\eta_2) \mid 1+ 2^{-i-1}\leq \left|\frac{(\xi_2^d-\xi_1^d)\phi(1,\eta_1)}{\xi_2^d(\phi(1,\eta_2)-\phi(1,\eta_1))}\right| \leq  1+ 2^{-i}\right\}\\
& & \cap A_r(u_0,v_0,w_0).
\end{eqnarray*}

Then $B_{3}\cap B_0^c=\left[\bigcup_{i=0}^\infty B_{3}^i\cup\bigcup_{i=1}^\infty \tilde{B}_{3}^i\right]\cap B_0^c.$ 

 Now
  \begin{eqnarray*}
 \int_{V_4(B_3\cap B_0^c)}\frac{2^{\frac{k}{2}}}{\eta_2-\eta_1}dz =
 \sum_{i=0}^{\infty} \int_{V_4(B_3^i\cap B_0^c)}\frac{2^{\frac{k}{2}}}{\eta_2-\eta_1}dz + 
 \sum_{i=0}^{\infty}\int_{V_4(\tilde{B}_3^i\cap B_0^c)}\frac{2^{\frac{k}{2}}}{\eta_2-\eta_1}dz.
    \end{eqnarray*}
Since each of the sets $B_{3}^i\cap B_0^c,\tilde{B}_{3}^i\cap B_0^c$ is defined by
a bounded number of polynomial
inequalities the same argument as before gives us that $V_4(B_{3}^i\cap B_0^c)$ and $V_4(\tilde{B}_{3}^i\cap B_0^c)$ can
be written as a finite union of intervals whose total number is
bounded, with bound depending only on $d$. As before, this allows us to assume, by slight abuse of notation, that $V_4(B_{ 3}^i\cap B_0^c)$ is an interval in z, say $I^i_3=[a_3^i,b_3^i].$ 
Similarly we may assume that $V_4(\tilde{B}^i_3\cap B_0^c)$ is an interval, $\tilde{I}_3^i.$  
 
 Then   
 \begin{eqnarray}
\int_{I_3^i}\frac{2^{\frac{k}{2}}}{\eta_2-\eta_1} dz & = & 2^{\frac{k}{2}} \int_{I_3^i} \left|\frac{\xi_2^d(\phi(1,\eta_2)-\phi(1,\eta_1))}{\eta_2-\eta_1}\right|\frac{dz}{|\xi_2^d(\phi(1,\eta_2)-\phi(1,\eta_1))|}\nonumber \\
 & \leq & 2^{\frac{k}{2}} \lambda^{2d} \alpha^{-l+2}\int_{I_3^i}\frac{dz}{|\xi_2^d(\phi(1,\eta_2)-\phi(1,\eta_1))|}\nonumber \\
 & \leq & 2^{2+2d} 2^{\frac{k}{2}} \alpha^{-l} \int_{I_3^i}\frac{dz}{|\xi_2^d(\phi(1,\eta_2)-\phi(1,\eta_1))|}\label{eq100}.
 \end{eqnarray}
 There is a similar inequality for $\tilde{I}_{3}^i.$ 
 We recall that, by \eqref{lowerbound}, $\frac{K(1,\eta)}{\phi'(1,\eta)^2}\geq \epsilon $ on $Z_j,$ and hence
 $2^{-k}\alpha^{2l} \geq \frac{\epsilon}{4\alpha^2}>\frac{\epsilon}{16}. $ Using this in \eqref{eq100}, together with its analogue for $\tilde{I}_3^i,$  we see that it suffices to show that 
\begin{equation}\label{eq101}
\sum_{i=0}^{\infty} \int_{I_3^i}\frac{dz}{|\xi_2^d(\phi(1,\eta_2)-\phi(1,\eta_1))|}
+ \sum_{i=0}^\infty \int_{\tilde{I}_3^i}\frac{dz}{|\xi_2^d (\phi(1,\eta_2)-\phi(1,\eta_1))|}\leq C,
\end{equation}
for some constant $C$ depending only on $d$ and $\epsilon.$

Now if $(\xi_1,\xi_2,\eta_1,\eta_2)\in B_{3}\cap B_0^c$ then $(\xi_1,\xi_2,\eta_1,\eta_2)\in B_3$ and 
$\xi_2^d(\phi(1,\eta_2)-\phi(1,\eta)),$ $ (\xi_2^d-\xi_1^d)\phi(1,\eta_1)$ have opposite signs. 
We suppose first that $(\xi_1,\xi_2,\eta_1,\eta_2)\in B_{3}^i\cap B_0^c.$ Then 
\begin{eqnarray*}
\frac{2^{-i-1}}{1+2^{-i}}|\xi_2^d(\phi(1,\eta_2)-\phi(1,\eta_1))|&  \leq &  2^{-i-1}|(\xi_2^d-\xi_1^d)\phi(1,\eta_1)| \\
& \leq & |z|\\
& \leq & 2^{-i}|(\xi_2^d-\xi_1^d)\phi(1,\eta_1)|\\
& \leq &  \frac{2^{-i}}{1+2^{-i-1}}|\xi_2^d (\phi_2(1,\eta_2)-\phi_1(1,\eta_1))|.
\end{eqnarray*}
 
 Similarly, if $(\xi_1,\xi_2,\eta_1,\eta_2)\in \tilde{B}_{3}^i\cap B_0^c,$ then 
 $$\frac{2^{-i-1}}{1+2^{-i}}|(\xi_2^d-\xi_1^d)\phi(1,\eta_1)| \leq    |z|
\leq \frac{2^{-i}}{1+2^{-i-1}}|(\xi_2^d-\xi_1^d)\phi(1,\eta_1)|.$$

 Thus, for $z\in I_3^i\cup \tilde{I}_3^i$,
 \begin{equation}\label{eq102}
 2^{-i-2}|\xi_2^d(\phi(1,\eta_2)-\phi(1,\eta_1))|\leq |z| \leq 2^{-i+1}|\xi_2^d(\phi(1,\eta_2)-\phi(1,\eta_1))|.
 \end{equation}

  We will now show that there exists $i_0=i_0(\epsilon,d)$ such that $i\geq i_0\Longrightarrow \left|\frac{z}{z'}\right|\leq C, \forall
 z,z'\in I_3^i$ and $\forall z,z'\in \tilde{I}_3^i,$ with $C$ depending only on $d$ and $\epsilon.$

By \eqref{eq102}, for $ z,z'\in  I_3^i$ we have
\begin{eqnarray}
& & \left|\frac{z}{z'}\right|\nonumber \\
& \leq & 8 \left|\frac{\xi_2^d(\phi(1,\eta_2)-\phi(1,\eta_1))(z)}{\xi_2^d(\phi(1,\eta_2)-\phi(1,\eta_1))(z')}\right|\nonumber\\
& \leq & 8\left\{|\xi_2^d (z)|\frac{|(\phi(1,\eta_2)-\phi(1,\eta_1))(z)-(\phi(1,\eta_2)-\phi(1,\eta_1))(z')|}{|\xi_2^d(\phi(1,\eta_2)-\phi(1,\eta_1))(z')|} +\left|\frac{\xi_2^d(z)}{\xi_2^d(z')}\right|\right\}\nonumber\\
& \leq & 8 \left\{\lambda^{2d}2^{-i+1}\left|\frac{z-z'}{z'}\right|\cdot\left| \frac{d}{d z}(\phi(1,\eta_2)-\phi(1,\eta_1))(\theta)\right|+\lambda^{3d}\right\},\label{eq104}
\end{eqnarray}
for some $\theta$ between $z$ and $z'.$

We now use \eqref{eq*51} and \eqref{eq*52} to give
\begin{eqnarray*}
& & \left|\phi'(1,\eta_2)\frac{d\eta_2}{d z}-\phi'(1,\eta_1)\frac{d \eta_1}{d z}\right|\\
& = &\left|\left\{-d[\xi_2^{d-1}\phi(1,\eta_2)-\xi_1^{d-1}\phi(1,\eta_1)](\xi_1\phi'(1,\eta_2)+\xi_2\phi'(1,\eta_1)) \right.\right.\\
& &\qquad  \left.\left. +(\xi_1^d+\xi_2^d)\phi'(1,\eta_1)\phi'(1,\eta_2)(\eta_2-\eta_1)\right\}\right|\slash |JV(\xi_1,\xi_2,\eta_2,\eta_2)|\\
& \leq & \frac{1}{|JV(\xi_1,\xi_2,\eta_1,\eta_2)|}\left\{d|(\xi_2^{d-1}-\xi_1^{d-1})\phi(1,\eta_1)|\cdot |\xi_1\phi'(1,\eta_2)+\xi_2\phi'(1,\eta_1)| \right.\\
& & \quad\  + d|\xi_2^{d-1}|\cdot |\phi(1,\eta_2)-\phi(1,\eta_1)|\cdot |\xi_1\phi'(1,\eta_2)+\xi_2\phi'(1,\eta_1)|\\
& & \quad \ \left. + (|\xi_1^d|+|\xi_2|^d)\cdot |\phi'(1,\eta_1)\phi'(1,\eta_2)|\cdot |\eta_2-\eta_1|\right\}.
\end{eqnarray*} 

 For $(\xi_1,\xi_2,\eta_1,\eta_2)\in B_{3}\cap B_0^c$ we have
 \begin{eqnarray*}
 |(\xi_2^{d-1}-\xi_1^{d-1})\phi(1,\eta_1)| & = & |(\xi_2^d-\xi_1^d)\phi(1,\eta_1)|\cdot\left|\frac{\xi_2^{d-1}-\xi_1^{d-1}}{\xi_2^d-\xi_1^d}\right|\\
 & \leq & 2\sqrt{2}\frac{d-1}{d}\lambda |\xi_2^d (\phi(1,\eta_2)-\phi(1,\eta_1))|.
 \end{eqnarray*}
 
 Then
 \begin{eqnarray}
 & & \left|\phi'(1,\eta_2)\frac{d \eta_2}{d z}-\phi'(\eta_1)\frac{d \eta_1}{dz}\right|\label{eq105}\leq\\
 &  &\left\{2 \left(2\sqrt{2}(d-1)+\frac{d\sqrt{2}}{\lambda}\right)\frac{|\xi_2^d(\phi(1,\eta_2)-\phi(1,\eta_1))|}{|\eta_2-\eta_1|} |\eta_2-\eta_1| \lambda^{3}\alpha^{-l+2}\right. \nonumber\\
 & & \left.\qquad+2\lambda^{2d}\alpha^{2(-l+2)} |\eta_2-\eta_1|\right\}\slash |JV(\xi_1,\xi_2,\eta_1,\eta_2)| \nonumber\\
 & &\leq  2^k\alpha^{-2l}(d-1)\alpha^{10}\lambda^{4d} 2\left[\left(2\sqrt{2}(d-1)+\frac{d\sqrt{2}}{\lambda}\right)\lambda^3+1\right] (\text{using}\ \eqref{eq*13})\nonumber\\
 & &\leq  \frac{8}{\epsilon}\alpha^{12}\lambda^{4d}(d-1)\left[\left(2\sqrt{2}(d-1)+\frac{d\sqrt{2}}{\lambda}\right)\lambda^3+1\right]\nonumber  (\text{since } 2^{-k}\alpha^{2l}\geq\frac{\epsilon}{4\alpha^2}) \nonumber\\
 & & \leq \frac{2^{21+4d}}{\epsilon}(d-1)^2 (\text{since } 1<\lambda<2, 1<\alpha<2)\nonumber \\
 & & = c_{\epsilon,d},\nonumber
 \end{eqnarray}
where $c_{\epsilon,d}$  is as defined in \eqref{cepsilond}.

 We now substitute \eqref{eq105} in \eqref{eq104} and obtain
  $$\left|\frac{z}{z'}\right|\leq 8\left\{\lambda^{2d}2^{-i+1}\left(\left|\frac{z}{z'}\right|+1\right)c_{\epsilon,d} +\lambda^{3d}\right\}$$
and hence
$$\left|\frac{z}{z'}\right|\leq \frac{16\lambda^{2d}2^{-i}c_{\epsilon,d}+8\lambda^{3d}}{1-16\lambda^{2d}2^{-i}c_{\epsilon,d}},$$
provided the denominator on the right-hand side is positive.
We now choose $i_0$ to be the first non-negative integer $i$ such that $16\lambda^{2d}2^{-i} c_{\epsilon,d}\leq \frac12.$ Hence
$$\frac14 < 16\lambda^{2d}2^{-i_0}c_{\epsilon,d}\leq \frac12$$
and $16\lambda^{2d}2^{-i}c_{\epsilon,d}\leq \frac12, \forall i\geq i_0.$ 
 We note that $i_0$ is independent of $k$ and $l;$ moreover
 $$2^{5}c_{\epsilon_d}\leq 2^{i_0}<2^{2d+6}c_{\epsilon,d}.$$
    Hence 
$$\left|\frac{z}{z'}\right|\leq \frac{\frac12+8\lambda^{3d}}{\frac12}=1+16\lambda^{3d},\forall z,z'\in I_3^i, i\geq i_0.$$
Exactly the same  argument for $z,z'\in \tilde{I}_3^i$ gives
$$\left|\frac{z}{z'}\right|\leq 1+16\lambda^{3d},\forall z,z'\in \tilde{I}_3^i,\ i\geq i_0.$$
Then
\begin{eqnarray*}
& & \sum_{i=i_0}^{\infty}\int_{I_3^i}\frac{dz}{|\xi_2^d(\phi(1,\eta_2)-\phi(1,\eta_1))|} + 
\sum_{i=i_0}^{\infty}\int_{\tilde{I}_3^i}\frac{dz}{|\xi_2^d(\phi(1,\eta_2)-\phi(1,\eta_1))|} \\
& \leq & \sum_{i=i_0}^{\infty} 2^{-i+1}\int_{I_3^i}\frac{dz}{|z|} +\sum_{i=i_0}^\infty 2^{-i+1}\int_{\tilde{I}_3^i}\frac{dz}{|z|} \\
& \leq & 8(1+16\lambda^{3d})\leq 8(1+2^{3d+4}).
\end{eqnarray*}

To obtain \eqref{eq101}  it remains to show that we can bound  the sum from $0$ to $i_0.$ We will deal with these terms simultaneously with the integral over $V_4(B_1)\cup V_4(B_2)\cup V_4(B_3\cap B_0).$  We have
\begin{eqnarray*}
& & \bigcup_{i=0}^{i_0-1}(B_{3}^i\cup \tilde{B}_{3}^i)= \\
&  & \left[ \{(\xi_1,\xi_2,\eta_1,\eta_2)\mid1+2^{-i_0}\leq \frac{|\xi_2^d(\phi(1,\eta_2)-\phi(1,\eta_1))|}{|(\xi_2^d-\xi_1^d)\phi(1,\eta_1)|}\leq 2\}\cap A_r(u_0,v_0,w_0)\right]\\
& &  \cup
\left[\{(\xi_1,\xi_2,\eta_1,\eta_2)\mid1+2^{-i_0}\leq \frac{|(\xi_2^d-\xi_1^d)\phi(1,\eta_1)|}{|\xi_2^d(\phi(1,\eta_2)-\phi(1,\eta_1))|}\leq 2\}\cap A_r(u_0,v_0,w_0)\right].
\end{eqnarray*}
Then, for $(\xi_1,\xi_2,\eta_1,\eta_2)\in  \bigcup_{i=0}^{i_0-1}(B_{3}^i\cup \tilde{B}_{3}^i) $ we have
$$|z|\geq 2^{-i_0-1}\max \{|(\xi_2^d-\xi_1^d)\phi(1,\eta_1)|,|\xi_2^d(\phi(1,\eta_2)-\phi(1,\eta_1))|\}.$$
We let $B_4=\{(\xi_1,\xi_2,\eta_1,\eta_2)\mid |z|\geq 2^{-i_0-1}\max \{|(\xi_2^d-\xi_1^d)\phi(1,\eta_1)|,|\xi_2^d(\phi(1,\eta_2)-\phi(1,\eta_1))|\}.$ 
Then the proof will be complete once we show that 
\begin{eqnarray}
& & \int_{V_4(B_1)}  \frac{2^{\frac{k}{2}}}{\eta_2-\eta_1}dz + \int_{V_4(B_2)}  \frac{2^{\frac{k}{2}}}{\eta_2-\eta_1}dz + \int_{V_4(B_3\cap B_0)}  \frac{2^{\frac{k}{2}}}{\eta_2-\eta_1}dz\nonumber \\
& & \qquad\qquad 
+ \int_{V_4(B_3\cap B_0^c\cap B_4)}  \frac{2^{\frac{k}{2}}}{\eta_2-\eta_1}dz\leq C,\label{eq106}
\end{eqnarray}
with $C$ independent of $k$ and $l.$ 
By our earlier comments we may assume that, in each of the four
integrals on the left-hand side,  the range of integration in $z$ is an interval. Moreover, by splitting into two intervals if necessary, we may assume that the interval is of 
the form $[a,b]$ with  either $b>a\geq 0$ or $a<b\leq 0. $ Thus
\eqref{eq106} follows once we show that 
\begin{equation}\label{eq107}
\int_a^b \frac{2^{\frac{k}{2}}}{\eta_2-\eta_1}dz\leq C
\end{equation}
where, for $z\in [a,b],$ we have
\begin{equation}\label{eq41}
|z|\geq 2^{-i_0-1}\max \{|(\xi_2^d-\xi_1^d)\phi(1,\eta_1)|,|\xi_2^d(\phi(1,\eta_2)-\phi(1,\eta_1))|\}.
\end{equation}
(We note that a stronger inequality, \eqref{eq17}, holds on $V_4(B_1)\cup V_4(B_2)\cup V_4(B_3\cap B_0)$.)   We now need the following lemma. First, we define
\begin{eqnarray*}
& & S(\xi_1,\xi_2,\eta_1,\eta_2)=\\
& & \frac{(\eta_2-\eta_1)(\xi_1^d \phi'(1,\eta_1)+\xi_2^d\phi'(1,\eta_2))-d(\xi_1+\xi_2)(\xi_2^{d-1}\phi(1,\eta_2)-\xi_1^{d-1}\phi(1,\eta_1))}{(\xi_2^d-\xi_1^d)\phi(1,\eta_1)+ (\phi(1,\eta_2)-\phi(1,\eta_1))\xi_2^d}.
\end{eqnarray*}

\begin{lemma}\label{lemma4}
Let $A_{0}=\{(\xi_1,\xi_2,\eta_1,\eta_2)\in A_r(u_0,v_0,w_0) \mid |(\xi_2^d-\xi_1^d)\phi(1,\eta_1)+ (\phi(1,\eta_2)-\phi(1,\eta_1))\xi_2^d|\geq 2^{-i_0-1} \max\{|(\xi_2^d-\xi_1^d)\phi(1,\eta_1)|,|(\phi(1,\eta_2)-\phi(1,\eta_1))\xi_2^d|\}.$  
Then 
$$-3(d-1)\leq S(\xi_1,\xi_2,\eta_1,\eta_2)\leq -(d-1)$$
for all $(\xi_1,\xi_2,\eta_1,\eta_2)\in A_{0}.$ 
\end{lemma}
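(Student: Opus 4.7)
The plan is to reduce $N + 2(d-1)D$ to a trapezoidal-rule remainder. Along the line segment $t \mapsto (\xi(t),\eta(t)) := (\xi_1 + ts, \eta_1 + th)$, $t \in [0,1]$, with $s = \xi_2 - \xi_1$ and $h = \eta_2 - \eta_1$, I set
\begin{equation*}
F(t) := \xi(t)^d \phi(1,\eta(t)).
\end{equation*}
Then $D = F(1) - F(0)$ and a direct differentiation gives $F'(t) = ds\,\xi(t)^{d-1}\phi(1,\eta(t)) + h\,\xi(t)^d\phi'(1,\eta(t))$. A short algebraic manipulation using the definitions of $N$ and $D$ produces the identity
\begin{equation*}
F'(0) + F'(1) = h(\xi_1^d \phi'(1,\eta_1) + \xi_2^d \phi'(1,\eta_2)) + ds(\xi_1^{d-1}\phi(1,\eta_1) + \xi_2^{d-1}\phi(1,\eta_2)) = N + 2dD.
\end{equation*}

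Consequently $\tfrac{1}{2}(F'(0) + F'(1)) - (F(1) - F(0)) = \tfrac{1}{2}(N + 2(d-1)D)$ is precisely (half of) the trapezoidal rule error for $\int_0^1 F'(t)\,dt = D$. The standard remainder estimate then yields, for some $\tau \in (0,1)$,
\begin{equation*}
N + 2(d-1) D = \tfrac{1}{6} F'''(\tau),
\end{equation*}
and three differentiations (or the general Leibniz formula applied to $\xi^d f$) give
\begin{equation*}
F'''(\tau) = \sum_{k=0}^{3} \binom{3}{k} \frac{d!}{(d-3+k)!}\, s^{3-k} h^k\, \xi(\tau)^{d-3+k}\, \phi^{(k)}(1,\eta(\tau)).
\end{equation*}
To obtain $-3(d-1) \leq S \leq -(d-1)$ it therefore suffices to establish $|F'''(\tau)| \leq 6(d-1)|D|$.

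For the final step I would exploit the two smallness parameters built into the set $A_r(u_0,v_0,w_0) \subseteq A^s_{m,n,p,j}(k,l,a)$. The constraint $\xi_1, \xi_2 \in \mathcal{I}_p$ forces $|s/\xi(\tau)| \leq \lambda^3 - 1 \leq \delta/d$, which contributes factors of $(\delta/d)^{3-k}$ to the $k$th summand. The constraint $\eta_1, \eta_2 \in I_{l,n} \cap I_{k,m}$ controls $|h|$ through the bounded variation of $\phi'(1,\cdot)$ (factor $\alpha^3 - 1 = \delta$) and of $K(1,\cdot)$ on these intervals, combined with the Euler-type identity $K = d(d-1)(\phi\phi'' - \tfrac{d-1}{d}(\phi')^2)$ and the lower bound \eqref{lowerbound} $|K| \geq \epsilon|\phi'|^2$ available for $j \in J_1$. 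The defining inequality for $A_0$, namely $|D| \geq 2^{-i_0-1}\max\{|(\xi_2^d-\xi_1^d)\phi(1,\eta_1)|, |(\phi(1,\eta_2)-\phi(1,\eta_1))\xi_2^d|\}$, provides the crucial lower bound on $|D|$ in terms of the two natural pieces against which the $k = 0$ and $k = 3$ extreme terms in $F'''(\tau)$ are most naturally measured. Inserting all these estimates and appealing to the definitions \eqref{cepsilond}--\eqref{alpha}, which were calibrated so that the combined small factor $\delta \cdot 2^{i_0}$ stays under control relative to $(d-1)/\epsilon$, yields the required bound $|F'''(\tau)| \leq 6(d-1)|D|$.

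The main obstacle is the bookkeeping for the mixed terms $k = 1, 2$: controlling $|h^k \phi^{(k)}(1,\eta(\tau))|$ for $k \geq 2$ pointwise (not only in mean) requires combining the interval estimates on $\phi'$ and $K$ with the polynomial relation between $\phi, \phi'$ and $\phi''$, and then matching the resulting factors of $\delta$, $\alpha^{-l}$, and $2^{-k}$ against $|D|$ using the $A_0$ condition. The precise choice $\delta = \min\{\epsilon/(2^{33+6d}(d-1)),(d-1)/(3d)\}$ is exactly what absorbs all accumulated constants and closes the estimate at $(d-1)|D|$.
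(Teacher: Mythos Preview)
Your trapezoidal identity is correct and genuinely elegant: setting $F(t)=\xi(t)^d\phi(1,\eta(t))$ one does have $F'(0)+F'(1)=N+2dD$ and hence $N+2(d-1)D=\tfrac16 F'''(\tau)$, so the lemma is equivalent to $|F'''(\tau)|\leq 6(d-1)|D|$. This recasts the algebra in a conceptually clean way that the paper does not use. The paper instead writes $S+2(d-1)$ explicitly as a sum of three pieces $I+II+III$ and bounds each by hand: $I$ compares $(\phi'(1,\eta_1)+\phi'(1,\eta_2))\big/\frac{\phi(1,\eta_2)-\phi(1,\eta_1)}{\eta_2-\eta_1}$ with $2$ (using only that $\phi'$ lies in the narrow band $(\alpha^{-l-1},\alpha^{-l+2})$ on $I_{l,n}$, plus $|(\xi_1/\xi_2)^d-1|\leq\lambda^{3d}-1$), $II$ compares $d\xi_1^{d-1}(\xi_2^2-\xi_1^2)/(\xi_2(\xi_2^d-\xi_1^d))$ with $2$, and $III=d|1-\xi_1/\xi_2|$. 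Every input is a ratio of values of $\phi'$ or of $\xi_i$ at the endpoints, so no derivative of $\phi$ beyond the first ever appears.

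That is exactly where your route runs into a real obstacle, not just bookkeeping. The terms $k=2,3$ in $F'''(\tau)$ involve $\phi''(1,\eta(\tau))$ and $\phi'''(1,\eta(\tau))$ \emph{pointwise} at an uncontrolled interior point. The hypotheses on $A_r(u_0,v_0,w_0)$ only pin down the \emph{range of values} of $\phi'$ and $K$ on the interval $[\eta_1,\eta_2]$; they say nothing about $\phi''$ or $\phi'''$ at a specific $\eta(\tau)$. Knowing $|\phi'(\eta_2)-\phi'(\eta_1)|\lesssim\delta\,\alpha^{-l}$ bounds $h\phi''$ only at \emph{some} mean-value point, not at $\eta(\tau)$; likewise the Euler identity $K=d(d-1)\phi\phi''-(d-1)^2(\phi')^2$ trades $\phi''$ for $K/\phi$, which brings in the uncontrolled size of $\phi$, and $\phi'''$ then requires $K'$, which is nowhere bounded. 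In particular $h=\eta_2-\eta_1$ is \emph{not} made small by the choice of $\delta$: only $|\xi_1/\xi_2-1|$ and the multiplicative oscillation of $\phi'$ are small. So the closing sentence ``the precise choice $\delta=\ldots$ absorbs all accumulated constants'' is not justified: you have not produced any factor of $\delta$ attached to the $k=2,3$ terms, and the constants you would need depend on $\phi''',\phi$ rather than on $d,\epsilon$ alone. The paper's purely first-order decomposition sidesteps this difficulty entirely; if you want to keep your trapezoidal framing, you would need to replace the pointwise Taylor remainder by an integral remainder and then integrate by parts back down to quantities expressible through $\phi'$ and $K$ on $[\eta_1,\eta_2]$ --- at which point you are essentially reconstructing the paper's three terms.
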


  \begin{proof}
  We have
  \begin{eqnarray*}
  & & S(\xi_1,\xi_2,\eta_1,\eta_2) \\
  & = & \frac{(\eta_2-\eta_1)(\xi_1^d\phi'(1,\eta_1)+\xi_2^d\phi'(1,\eta_2))-d(\xi_1+\xi_2)(\xi_2^{d-1}\phi(1,\eta_2)-\xi_1^{d-1}\phi(1,\eta_1))}{(\xi_2^d-\xi_1^d)\phi(1,\eta_1)+ (\phi(1,\eta_2)-\phi(1,\eta_1))\xi_2^d}\\
  & = & \frac{(\eta_2-\eta_1)(\xi_1^d\phi'(1,\eta_1)+\xi_2^d\phi'(1,\eta_2))-\frac{d(\xi_1+\xi_2)}{\xi_2}\xi_1^{d-1}(\xi_1-\xi_2)\phi(1,\eta_1)}{(\xi_2^d-\xi_1^d)\phi(1,\eta_1)+ (\phi(1,\eta_2)-\phi(1,\eta_1))\xi_2^d}\\
  & & -d\left(1+\frac{\xi_1}{\xi_2}\right).
  \end{eqnarray*}
  Hence 
  \begin{eqnarray*}
  & & S(\xi_1,\xi_2,\eta_1,\eta_2)+2(d-1) \\
  & = & \frac{(\eta_2-\eta_1)(\xi_1^d\phi'(1,\eta_1)+\xi_2^d\phi'(1,\eta_2))-2\xi_2^d(\phi(1,\eta_2)-\phi(1,\eta_1))}{(\xi_2^d-\xi_1^d)\phi(1,\eta_1)+ (\phi(1,\eta_2)-\phi(1,\eta_1))\xi_2^d}\\ 
  & & -\frac{2\phi(1,\eta_1)(\xi_2^d-\xi_1^d)-d\frac{\xi_1^{d-1}}{\xi_2}(\xi_2^2-\xi_1^2)\phi(1,\eta_1)}{(\xi_2^d-\xi_1^d)\phi(1,\eta_1)+ (\phi(1,\eta_2)-\phi(1,\eta_1))\xi_2^d}+d\left(1-\frac{\xi_1}{\xi_2}\right).
  \end{eqnarray*}
  Then
  \begin{eqnarray*}
  & & |S(\xi_1,\xi_2,\eta_1,\eta_2)+2(d-1)|\\
  & \leq & \frac{|(\eta_2-\eta_1)(\xi_1^d\phi'(1,\eta_1)+\xi_2^d\phi'(1,\eta_2))-2\xi_2^d(\phi(1,\eta_2)-\phi(1,\eta_1))|}{|(\xi_2^d-\xi_1^d)\phi(1,\eta_1)+ (\phi(1,\eta_2)-\phi(1,\eta_1))\xi_2^d|}\\
  & & + \left|2(\xi_2^d-\xi_1^d)-d\frac{\xi_1^{d-1}}{\xi_2}(\xi_2^2-\xi_1^2)\right|\frac{|\phi(1,\eta_1)|}{|(\xi_2^d-\xi_1^d)\phi(1,\eta_1)+ (\phi(1,\eta_2)-\phi(1,\eta_1))\xi_2^d|} \\
  & & + d\left| 1-\frac{\xi_1}{\xi_2}\right|\\
  & = & I+II+III.
  \end{eqnarray*}
  We will now show that for our choice of $\lambda$ and $\alpha $ we have $I+II+III\leq (d-1).$ If so, then
  the result is immediate. We begin by estimating each of the terms separately. We have
  \begin{eqnarray*}
  I & = & \left| \frac{(\eta_2-\eta_1)(\xi_1^d\phi'(1,\eta_1)+\xi_2^d\phi'(1,\eta_2))-2\xi_2^d(\phi(1,\eta_2)-\phi(1,\eta_1))}{(\xi_2^d-\xi_1^d)\phi(1,\eta_1)+ (\phi(1,\eta_2)-\phi(1,\eta_1))\xi_2^d}\right|\\
 & \leq & 2^{i_0+1}\left|\frac{  (\eta_2-\eta_1)(\xi_1^d\phi'(1,\eta_1)+\xi_2^d\phi'(1,\eta_2))-2\xi_2^d(\phi(1,\eta_2)-\phi(1,\eta_1))}{\xi_2^d (\phi(1,\eta_2)-\phi(1,\eta_1))}\right|\\
  & = & 2^{i_0+1}
  \left|\frac{\frac{\xi_1^d}{\xi_2^d}\phi'(1,\eta_1)+\phi'(1,\eta_2)}{\frac{\phi(1,\eta_2)-\phi(1,\eta_1)}{\eta_2-\eta_1}}-2\right|\\
  & \leq & 2^{i_0+1}
  \left\{ \left|\frac{\phi'(1,\eta_1)+\phi'(1,\eta_2)}{\frac{\phi(\eta_2)-\phi(1,\eta_1)}{\eta_2-\eta_1}}-2\right|+\left|\frac{\xi_1^d}{\xi_2^d}-1\right|\frac{|\phi'(1,\eta_1)|}{\left|\frac{\phi(1,\eta_2)-\phi(1,\eta_1)}{\eta_2-\eta_1}\right|}\right\}.
  \end{eqnarray*}
  Now for $(\xi_1,\xi_2,\eta_1,\eta_2)\in A_r(u_0,v_0,w_0),$ we have
  $$-\frac{(\lambda^{3d}-1)}{\lambda^{3d}}=\frac{1}{\lambda^{3d}}-1\leq \left(\frac{\xi_1}{\xi_2}\right)^d-1\leq \lambda^{3d}-1$$ and so
  $$\left|\left( \frac{\xi_1}{\xi_2}\right)^d-1\right|\leq \lambda^{3d}-1.$$
  Also $\frac{\phi(1,\eta_2)-\phi(1,\eta_1)}{\eta_2-\eta_1}=\phi'(1,\eta_4)$ for some $\eta_4\in (\eta_1,\eta_2)$ and hence 
  $$\frac{2}{\alpha^3}\leq \frac{\phi'(1,\eta_1)+\phi'(1,\eta_2)}{\frac{\phi(1,\eta_2)-\phi(1,\eta_1)}{\eta_2-\eta_1}}\leq 2\alpha^3$$ and then
  $$\left|   \frac{\phi'(1,\eta_1)+\phi'(1,\eta_2)}{\frac{\phi(1,\eta_2)-\phi(1,\eta_1)}{\eta_2-\eta_1}}-2\right|\leq 2(\alpha^3-1).$$
  Then 
  \begin{equation}\label{eq16a}
  I \leq 2^{i_0+1}[2(\alpha^3-1)+(\lambda^{3d}-1)\alpha^3].
  \end{equation}
  
  Next we have 
  $$
  II  \leq  2^{i_0+1}
  \frac{|2(\xi_2^d-\xi_1^d)-d\frac{\xi_1^{d-1}}{\xi_2}(\xi_2^2-\xi_1^2)|}{|(\xi_2^d-\xi_1^d)|}=  2^{i_0+1}\left|2-\frac{d\xi_1^{d-1}(\xi_2^2-\xi_1^2)}{\xi_2(\xi_2^d-\xi_1^d)}\right|.
  $$
  Now
  $$  d\frac{\xi_1^{d-1}}{\xi_2}\frac{\xi_2^2-\xi_1^2}{\xi_2^d-\xi_1^d}  =  d
\left(\frac{\xi_1}{\xi_2}\right)^{d-1}
  \frac{\left(1-\left(\frac{\xi_1}{\xi_2}\right)^2\right)}{\left(1-\left(\frac{\xi_1}{\xi_2}\right)^d\right)}
   =  2\left(\frac{\xi_1}{\xi_2}\right)^{d-1}\frac{1}{\theta^{d-2}}$$
for some $\theta$ between $\frac{\xi_1}{\xi_2}$ and $1.$ 
  We recall that $\frac{1}{\lambda^3}<\frac{\xi_1}{\xi_2}<\lambda^3$ and so we consider two cases. If $\frac{1}{\lambda^3}<\frac{\xi_1}{\xi_2}<\theta<1<\lambda^3,$ then 
  $$\frac{2}{\lambda^{3(d-1)}}<2\left(\frac{\xi_1}{\xi_2}\right)^{d-1}\frac{1}{\theta^{d-2}}<2.$$
  If $1<\theta<\frac{\xi_1}{\xi_2}<\lambda^3,$ then
  $$2< 2\left(\frac{\xi_1}{\xi_2}\right)^{d-1}\frac{1}{\theta^{d-2}}<2\lambda^{3(d-1)}.$$ 
  Thus in all cases we have 
  $$\frac{2}{\lambda^{3(d-1)}}<  d\frac{{\xi_1}^{d-1}}{\xi_2}\frac{\xi_2^2-\xi_1^2}{\xi_2^d-\xi_1^d}<2\lambda^{3(d-1)}.$$
  Hence 
  $$-2(\lambda^{3(d-1)}-1)  <  2-\frac{d\xi_1^{d-1}(\xi_2^2-\xi_1^2)}{\xi_2(\xi_2^d-\xi_1^d)}<2(\lambda^{3(d-1)}-1).$$
  Thus we obtain 
  \begin{equation}\label{eq16b}
  II\leq 2^{i_0+2}(\lambda^{3(d-1)}-1).
  \end{equation}
  
  Finally, we have the easy estimate
  \begin{equation}\label{eq16c}
  III=d\left|1-\frac{\xi_1}{\xi_2}\right|\leq d(\lambda^3-1).
  \end{equation}
  
 We recall that 
$ 2^{i_0} <2^{2d+6}c_{\epsilon,d}.$ Also, 
   a routine calculation shows that, for $d\geq 2,$ $\lambda^3-1\leq \lambda^{3(d-1)}-1\leq \lambda^{3d}-1\leq \delta,$ and so \eqref{eq16a}, \eqref{lambda}, \eqref{alpha} and \eqref{delta} together  give us
  \begin{eqnarray*}
  I & \leq & 2^{i_0+1}\left[2\delta+8\delta\right]\\
  & \leq & 2^{2d+7}c_{\epsilon,d}10\delta\\
    & \leq & 2^{2d+7}\frac{2^{21+4d}}{\epsilon}(d-1)^2 10\frac{\epsilon}{2^{33+6d}(d-1)}\\
    & = & (d-1)\frac{10}{2^5}\\
    & \leq &  \frac{1}{3}(d-1).
  \end{eqnarray*}
  Also, using \eqref{eq16b}, \eqref{lambda}, \eqref{delta} and a similar argument, we have
 $$  II  \leq   {2^{i_0+2}}c_{\epsilon,d} \delta \leq  \frac{1}{2^4} (d-1) \leq \frac13 (d-1)$$
   and  finally, using \eqref{eq16c}, \eqref{lambda} and  \eqref{delta},
$$
   III  \leq d(\lambda^3-1)\leq d\delta\leq \frac{1}{3}(d-1).$$
   Hence we have
   $I+II+III\leq (d-1),$ and so the proof is complete.
  
 \end{proof}

We now use Lemma \ref{lemma4} to prove \eqref{eq107}. We have
\begin{equation}\label{eqH}
-3(d-1)\leq S(\xi_1,\xi_2,\eta_1,\eta_2)\leq -(d-1),
\end{equation}
for all $(\xi_1,\xi_2,\eta_1,\eta_2)\in B_1\cup B_2\cup (B_3\cap B_0) \cup (B_3\cap B_0^c\cap B_4)$.  

We define $F(z)=\eta_2-\eta_1=L_4(z)-L_3(z).$ Then, using 
\eqref{eq*51} and \eqref{eq*52}, we have
 \begin{eqnarray*}
 & & F(z)F'(z)\\
 & = &\frac{L_4(z)-L_3(z)}{JV(L(z))}\{(L_4(z)-L_3(z))[L_1(z)^d\phi'(1,L_3(z))+L_2(z)^d \phi'(1,L_4(z))] \\ 
 & & \qquad\qquad -d(L_1(z)+L_2(z))[L_2(z)^{d-1}\phi(1,L_4(z))-L_1(z)^{d-1}\phi(1,L_3(z))]\},
 \end{eqnarray*}
 which we rewrite as 
  \begin{eqnarray*}
  F(z)F'(z) & = &
  \frac{\eta_2-\eta_1}{JV(\xi_1,\xi_2,\eta_1,\eta_2)}\left\{ (\eta_2-\eta_1)(\xi_1^d \phi'(1,\eta_1)+\xi_2^d\phi'(1,\eta_2))-\right.\\
 & & \qquad\qquad \left. d(\xi_1+\xi_2)(\xi_2^{d-1}\phi(1,\eta_2)-\xi_1^{d-1}\phi(1,\eta_1))\right\}.
 \end{eqnarray*}
 
and so  
$$\frac{F(z)F'(z)}{z}=\frac{\eta_2-\eta_1}{JV(\xi_1,\xi_2,\eta_1,\eta_2)} S(\xi_1,\xi_2,\eta_1,\eta_2).$$
If we now substitute using \eqref{eqH} and \eqref{eq52} we have
\begin{equation}\label{eq20}
-3\frac{(d-1)2^k}{c}\leq \frac{F(z)F'(z)}{z} \leq -\frac{(d-1)2^k}{C}.
\end{equation}
Now, since we are considering the region where $\eta_2> \eta_1,$ we have $F(z)> 0,$ and then from \eqref{eq20} we see that $F'(z)$ is positive for $z< 0$ and negative for $z> 0.$ We also note that $z=0$ is not in our region of integration, by \eqref{eq41}.

Then for $z> 0$ we have
$$-6\frac{(d-1)2^k}{c}z\leq \frac{d}{dz}(F(z)^2)\leq -2\frac{(d-1)2^k}{C}z,$$
and for $z<0$ we have
$$-2\frac{(d-1)2^k}{c}z\leq \frac{d}{dz}(F(z)^2)\leq -6\frac{(d-1)2^k}{C}z$$
 
We take first the case $0\leq a<b.$ Then, on integrating the differential inequalities, using the fact that they hold in an interval in $z$, for $a\leq z\leq b$ we have 
$$-3\frac{(d-1)2^k}{c}(b^2-z^2) \leq F(b)^2-F(z)^2\leq -\frac{(d-1)2^k}{C}(b^2-z^2).$$
Thus
\begin{eqnarray*}
\int_{a}^b\frac{2^{\frac{k}{2}}}{\eta_2-\eta_1} dz & = & 2^{\frac{k}{2}}\int_a^b\frac{dz}{F(z)}\\ 
& \leq &  2^{\frac{k}{2}}\int_{a}^b \frac{dz}{\sqrt{F(b)^2+\frac{(d-1)2^k(b^2-z^2)}{C}}} \\
& \leq & C\int_a^b \frac{dz}{\sqrt{b^2-z^2}} \\
& \leq &  C,
\end{eqnarray*}
with $C$ depending only on $\epsilon$ and $d,$ and independent of $k,l.$ 
This gives \eqref{eq107} as required. A similar argument works in the
case $a<b\leq 0. $ This completes the proof of Proposition \ref{prop2.5}. \bigskip

We now need to consider $j\in J_2.$ We let $\phi_j=\phi\circ T_j^{-1},$ as defined in
Lemma \ref{propZ3}. We define $$Z_{\phi_j}=\{(\xi,\mu)\mid |\mu|<|\xi|, \left|\tan^{-1}\left(\frac{\mu}{\xi}\right)-\tan^{-1}(\eta^0)\right|<\tilde{\epsilon}_j\},$$
in analogy with $Z_j.$ From Lemma \ref{propZ3} we know that $T_j(Z_j)\subset Z_{\phi_j},$ 
that $K_{\phi_j}(1,\eta)$ has no zero other than $\eta^0$ in $\overline{Z_{\phi_j}},$ and $\phi'(1,\eta^0)=0.$ Therefore, Lemma \ref{propZ2} for this $\phi_j$ shows that 
 $|K_{\phi_j}(1,\frac{\mu}{\xi})|\geq \epsilon'|\phi_j'(1,\frac{\mu}{\xi})|^2,$ for all $(\xi,\mu)\in Z_{\phi_j},$ where $\epsilon'$ depends only on $\phi_j$ (and hence only on $\phi$).
  We then have corresponding $\lambda,\alpha$ as in \eqref{lambda}, \eqref{alpha} and we define $\Psi_{0lk}$
and the sets $A_{m,n,p,j}^s$ as before, but with $\phi_j$ replacing $\phi$ and $Z_{\phi_j}$ replacing $Z_j.$ 
   We define
$$\widehat{(\tilde{W}_j g)}(\xi,\eta)= \chi_{T_j(Z_j)}(\xi,\mu)\hat{g}(\xi,\mu).$$
Then we have an analogue of Proposition \ref{prop2.5} for $\phi_j.$ Thus we obtain, as before, for $j\in J_2,$
$$\|U_j (t)\tilde{W}_j g\|_{L^4_{xyt}}\leq C\| g\|_{L^2_{xy}},\ \forall g\in L^2(\R^2), $$
with constant $C$ depending only on $j,\epsilon'$ and $d,$ where $U_j$ corresponds to $\phi_j.$ 
By the parametrization invariance of the restriction property it follows that, for $j\in J_2,$ 
\begin{equation}\label{lasteq}
\|U (t)W_j g\|_{L^4_{xyt}}\leq C\|g\|_{L^2_{xy}},\ \forall g\in L^2(\R^2),
\end{equation}
with constant $C$ depending only on $j,d$ and $\epsilon'.$   
If we now combine \eqref{lasteq} with Corollary \ref{corollary2.6}b) we have
$$\|U(t) W_j g\|_{L^4_{xyt}}\leq C\|g\|_{L^2_{xy}},\ \forall g\in L^2(\R^2), \forall j=1,\dots, R.$$
 By the remark preceding Proposition \ref{prop1}, the proof is complete.\medskip

It remains only to establish our Littlewood-Paley theorem, Proposition \ref{prop2}.

\section{Proof of Proposition \ref{prop2}}\label{section3}

\begin{proof}
We begin with the following standard lemma. 

\begin{lemma}\label{lemma100}
Let $A\in GL(n,\R).$ 

a) Let $\hat{g}(\xi)=\hat{f}(A\xi), $ for $\xi\in \R^n.$ Then $\|g\|_p=(\det A)^{\frac{1}{p}-1} \|f\|_p,$ for $1\leq p\leq \infty.$

b) Let $\tilde{m}(\xi)= m(A\xi),$ for $\xi\in \R^n.$ Then
$m$ is an $L^p$-multiplier if, and only if $\tilde{m}$ is an
$L^p$-multiplier, $1\leq p\leq \infty.$ The multiplier norms of $m$
and $\tilde{m}$ coincide.
\end{lemma}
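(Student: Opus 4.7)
The plan is to prove part (a) by direct Fourier inversion and a linear change of variables, then derive part (b) by conjugating the multiplier operator using the substitution supplied by (a). Neither part should require any machinery beyond Fourier inversion and the change of variables formula, so there is no serious obstacle; the main task is to keep the bookkeeping of the determinant factors straight.

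For (a), I would begin from $\hat g(\xi)=\hat f(A\xi)$ and apply Fourier inversion to get
\[
g(x)=c_n\int \hat f(A\xi)e^{ix\cdot\xi}d\xi.
\]
Substituting $\eta=A\xi$ (so $d\eta=|\det A|\,d\xi$) produces the pointwise identity $g(x)=|\det A|^{-1} f((A^t)^{-1}x)$. Taking $L^p$ norms and making the further linear substitution $y=(A^t)^{-1}x$ (which contributes an extra factor $|\det A|$) gives $\|g\|_p^p=|\det A|^{1-p}\|f\|_p^p$, i.e.\ $\|g\|_p=|\det A|^{1/p-1}\|f\|_p$, as claimed.

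For (b), let $T_m$ denote the multiplier operator with symbol $m$. Given $f\in L^p(\R^n)$, define $h$ by $\hat h(\eta)=\hat f(A^{-1}\eta)$; by (a) applied to $A^{-1}$, we have $\|h\|_p=|\det A|^{1-1/p}\|f\|_p$. Setting $u=T_{\tilde m}f$, one computes
\[
\hat u(A^{-1}\eta)=m(\eta)\hat f(A^{-1}\eta)=m(\eta)\hat h(\eta)=\widehat{T_m h}(\eta),
\]
so by (a) again, $\|T_m h\|_p=|\det A|^{1-1/p}\|u\|_p$. Dividing by the common factor $|\det A|^{1-1/p}$ gives
\[
\|T_{\tilde m}f\|_p=\|T_m h\|_p\cdot\|h\|_p^{-1}\cdot\|f\|_p\leq \|m\|_{M_p}\|f\|_p,
\]
which shows $\|\tilde m\|_{M_p}\leq \|m\|_{M_p}$. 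Reversing the roles of $m$ and $\tilde m$ (applying the same argument with $A^{-1}$ in place of $A$, since $m(\xi)=\tilde m(A^{-1}\xi)$) yields the opposite inequality, and hence equality of the multiplier norms. In particular, $m$ is an $L^p$-multiplier iff $\tilde m$ is. The only place care is needed is to verify that each determinant factor appears with the correct exponent; everything else is bookkeeping.
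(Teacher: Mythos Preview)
Your argument is correct and is exactly the standard one; the paper in fact states this lemma without proof, calling it ``standard,'' so there is nothing to compare against. One cosmetic remark: the paper writes $(\det A)^{1/p-1}$ without absolute value, but your use of $|\det A|$ is what is actually needed (and what the paper implicitly intends), and your bookkeeping of the determinant factors in both parts is accurate.
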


We prove Proposition \ref{prop2} a) first. We note that functions $f\in L^p (\R^2)$ for which $\hat{f}(\xi,\mu)=0$ whenever $p(\xi,\mu)=0$ are dense in $L^p (\R^2).$ This and the observation
that $1\leq \sum_k |\psi_k^\lambda(t)|^2,\ t\neq 0,$ allow us to obtain the left-hand-side of the 
desired inequality in the usual way, once we have the right-hand side. (See \cite{Stein2}, Chapter 4.)

We now prove the right-hand-side of the inequality. 
We first recall the definition of $\psi$ in \eqref{eq***} and  define also 
$\Phi_s(t)=\sum_{k=-\infty}^{\infty} r_k(s)  \psi_k(t),$
where $\{r_k\}$ are the Rademacher functions. Then we have
\begin{eqnarray}
|\Phi_s(t)| & \leq &C \label{lp1}\\\
|\Phi_s'(t)| & \leq & \frac{C}{|t|}\label{lp2}\\
|\Phi_s''(t)| & \leq & \frac{C}{|t|^2}\label{lp3},
\end{eqnarray}
with $C$ independent of $s.$ 

We now define $m(\xi,\mu)=\Phi_s(p(\xi,\mu)).$ Then, following
\cite{Stein2} (Chapter 4, Theorem 3), the result would follow if we
could show that 
\begin{eqnarray}
|m(\xi,\mu)| & \leq & C\label{lp*1}\\\
\left|\frac{\partial m}{\partial \xi}(\xi,\mu)\right| & \leq & \frac{C}{|\xi|}\label{lp*2}\\
\left|\frac{\partial m}{\partial \mu}(\xi,\mu)\right| & \leq & \frac{C}{|\mu|}\label{lp*3}\\
\left|\frac{\partial^2 m}{\partial \xi\partial\mu}(\xi,\mu)\right| & \leq & \frac{C}{|\mu\xi|},\label{lp*4}
\end{eqnarray}
again with $C$  independent of $s.$ 

We instead establish and then use inequalities similar to
\eqref{lp*1} to \eqref{lp*4} that are more suited to the geometry at
hand. It is easily seen that there is no $s$ dependence in the following argument, therefore we 
will write $\Phi$ for $\Phi_s.$

We begin by writing $p$ in its factored form:
\begin{eqnarray*}
& & p(\xi,\mu)=c_0(\mu-a_1\xi)^{k_1}(\mu-a_2\xi)^{k_2}\cdots (\mu-a_{d_1}\xi)^{k_{d_1}}\cdot\\
& & \qquad\qquad [(\mu-z_1\xi)(\mu-\overline{z_1}\xi)]^{l_1}\cdots[(\mu-z_{d_2}\xi)(\mu-\overline{z_{d_2}}\xi)]^{l_{d_2}},
\end{eqnarray*}
where $c_0\neq 0, a_i\in \R, i=1,\dots, d_1,$ and $z_j\in {\mathbb C}, j=1,\dots, d_2.$ Without loss of generality, by Lemma \ref{lemma100}b) and scaling,  we may take $c_0=1.$ For each $j=1,\dots,d_2$ we write $z_j=\alpha_j+i\beta_j,$ with
$\alpha_j,\beta_j\in\R.$  Then
$(\mu-z_j\xi)(\mu-\overline{z_j}\xi)= (\mu-\alpha_j\xi)^2+\beta_j^2\xi^2$ and hence
$$p(\xi,\mu)=\prod_{i=1}^{d_1}(\mu-a_i\xi)^{k_i}\prod_{j=1}^{d_2}[(\mu-\alpha_j\xi)^2+\beta_j^2\xi^2]^{l_j}.$$
We take $a_1,\dots,a_{d_1},\alpha_1,\dots,\alpha_{d_2}$ and order them as 
$b_1<b_2<\dots <b_{d_3}.$ We note that $d_3\leq d_1+d_2$ as it is possible that $\alpha_i=a_j$ for some $i,j,$ or $\alpha_{j_1}=\alpha_{j_2}$ for some $j_1,j_2.$  

We now assume that $|\mu|\leq |\xi|$ and choose $\theta\in C^{\infty}(\R)$ such that \newline $\theta(x)=\begin{cases} 1 & x\leq 1\\ 0 & x\geq 2 \end{cases}$ 
and 
\begin{eqnarray*}
 1&= & \theta\left(\left(\frac{\mu}{\xi}-b_1\right)\frac{3}{b_2-b_1}\right)\\
 & & +
\sum_{k=2}^{d_3 -1}\theta\left(\left(b_k-\frac{\mu}{\xi}\right)\frac{3}{b_k-b_{k-1}}\right)\theta\left(\left(\frac{\mu}{\xi}-b_k\right)\frac{3}{b_{k+1}-b_k}\right) \\
& & +\theta\left(\left(b_{d_3}-\frac{\mu}{\xi}\right)\frac{3}{b_{d_3}-b_{d_3 -1}}\right).
\end{eqnarray*}

Of course, when $|\xi|\leq |\mu|,$ an analogous partition of unity with $\frac{\xi}{\mu}$ must be used.

We let $\rho_k=\frac{b_k+b_{k-1}}{2}$ and note that 
\begin{eqnarray*}
& & \theta\left(\left(b_k-\frac{\mu}{\xi}\right)\frac{3}{b_k-b_{k-1}}\right)\theta\left(\left(\frac{\mu}{\xi}-b_k\right)\frac{3}{b_{k+1}-b_k}\right) = 1 \\
& \Longleftrightarrow & \rho_k +\frac{b_k-b_{k-1}}{6}\leq \frac{\mu}{\xi}\leq \rho_{k+1}-\frac{b_{k+1}-b_k}{6}.
\end{eqnarray*}
Also, 
$$\frac{\mu}{\xi}\leq \rho_k-\frac{b_k-b_{k-1}}{6}\Longrightarrow \theta\left(\left(b_k-\frac{\mu}{\xi}\right)\frac{3}{b_k-b_{k-1}}\right)=0$$
and 
$$\frac{\mu}{\xi}\geq \rho_{k+1}+\frac{b_{k+1}-b_k}{6}\Longrightarrow \theta\left(\left(\frac{\mu}{\xi}-b_k\right)\frac{3}{b_{k+1}-b_k}\right)=0.$$

Now we can write 
\begin{eqnarray*}
m(\xi,\mu) & = & \Phi(p(\xi,\mu))\\
& = & \left[\theta\left(\left(\frac{\mu}{\xi}-b_1\right)\frac{3}{b_2-b_1}\right)\right.\\
 & & +
\sum_{k=2}^{d_3 -1}\theta\left(\left(b_k-\frac{\mu}{\xi}\right)\frac{3}{b_k-b_{k-1}}\right)\theta\left(\left(\frac{\mu}{\xi}-b_k\right)\frac{3}{b_{k+1}-b_k}\right) \\
& & \left.+\theta\left(\left(b_{d_3}-\frac{\mu}{\xi}\right)\frac{3}{b_{d_3}-b_{d_3 -1}}\right)\right]\Phi(p(\xi,\mu)).
\end{eqnarray*}

To show that $m$ is an $L^p-$multiplier, it suffices to show that 
each of 
\begin{eqnarray*}
m_1(\xi,\mu) & = & \theta\left(\left(\frac{\mu}{\xi}-b_1\right)\frac{3}{b_2-b_1}\right)\Phi(p(\xi,\mu))\\
m_k(\xi,\mu) & = & \theta\left(\left(b_k-\frac{\mu}{\xi}\right)\frac{3}{b_k-b_{k-1}}\right)\theta\left(\left(\frac{\mu}{\xi}-b_k\right)\frac{3}{b_{k+1}-b_k}\right) \Phi(p(\xi,\mu)),\\
& & \qquad\qquad\qquad \qquad \qquad  \qquad\qquad\qquad\qquad 2\leq k\leq d_3 -1\\
\quad m_{d_3}(\xi,\mu) & = & \theta\left(\left(b_{d_3}-\frac{\mu}{\xi}\right)\frac{3}{b_{d_3}-b_{d_3 -1}}\right)\Phi(p(\xi,\mu))
\end{eqnarray*}
is an $L^p$-multiplier. We will show this for $m_k, 2\leq k\leq d_3 -1.$ The other cases ($m_1$ and $m_{d_3}$) are simpler.
 
By Lemma \ref{lemma100} it suffices to show that $\tilde{m}_k$ is an $L^p$- multiplier, where
$\tilde{m}_k(\xi,\mu)=m_k(A(\xi,\mu))$ for some $A\in GL(2,\R)$ with $\det A=1.$ We take
$A=\left( \begin{array}{cc} 1 & 0 \\ b_k & 1 \end{array}\right).$ 
Then $A(\xi,\mu)=(\xi,\mu+b_k\xi)$ and 
$$\tilde{m}_k(\xi,\mu)=\theta\left(-\frac{\mu}{\xi}\frac{3}{b_k-b_{k-1}}\right)\theta\left(\frac{\mu}{\xi}\frac{3}{b_{k+1}-b_{k}}\right)\Phi(p(\xi,\mu+b_k\xi)).$$

We now have to consider cases according to whether $b_k$ is a real root or the real part of an imaginary root  of $p$ (or possibly both). We first take the case where $b_k=a_{i_0}$ for some $i_0$ and we assume that $a_{i_0}\neq \alpha_j$ for any $j.$   

So 
\begin{eqnarray*}
p(\xi,\mu+b_k\xi) & = & p(\xi,\mu+a_{i_0}\xi)\\
& = & \mu^{k_{i_0}}\prod_{i=1, i\neq i_0}^{d_1}(\mu+(a_{i_0}-a_i)\xi)^{k_i} \prod_{j=1}^{d_2}[(\mu+(a_{i_0}-\alpha_j)\xi)^2+\beta_j^2\xi^2]^{l_j}\\
& := & q(\xi,\mu).
\end{eqnarray*}
Now, 
\begin{equation}\label{57}
\tilde{m}_k(\xi,\mu)\neq 0\Longrightarrow 
-\frac23(b_k-b_{k-1})\leq \frac{\mu}{\xi}\leq \frac23(b_{k+1}-b_k).
\end{equation} 
We claim that, on the support of $\tilde{m}_k,$  
\begin{equation}\label{eq*1}
\left|\frac{\mu}{\xi}+(a_{i_0}-a_i)\right|\geq \begin{cases}\frac13 |a_{i_0}-a_i| & \forall i\neq i_0\\
\frac12\left|\frac{\mu}{\xi}\right|  &\forall i\neq i_0. \end{cases}
\end{equation}
To see this we consider first the case $\frac{\mu}{\xi}\geq 0.$ If also $a_{i_0}-a_i\geq 0$ then the result is clear. If $a_{i_0}-a_i\leq 0$ then we use \eqref{57} and the observation that $a_i-a_{i_0}\geq b_{k+1}-b_k$ to obtain
\begin{eqnarray*}
\left|\frac{\mu}{\xi}+(a_{i_0}-a_i)\right | & \geq &  (a_i-a_{i_0})-\frac{\mu}{\xi}\\
& \geq & (a_{i}-a_{i_0})-\frac{2}{3}(b_{k+1}-b_k)\\
& \geq & \frac13(a_{i_0}-a_i)\\
& \geq &  \frac13(b_{k+1}-b_k)\geq \frac12 \left|\frac{\mu}{\xi}\right|.
\end{eqnarray*}
Next we consider the case $\frac{\mu}{\xi}\leq 0.$ If also $a_{i_0}-a_i\leq 0$ then the result is clear.
If $a_{i_0}-a_i\geq 0$ then we have
\begin{eqnarray*}
\left|\frac{\mu}{\xi}+(a_{i_0}-a_i)\right| & \geq & (a_{i_0}-a_i)+\frac{\mu}{\xi}\\
& \geq & (a_{i_0}-a_i)-\frac23(b_k-b_{k-1})\\
& \geq & \frac13(a_{i_0}-a_i)\\
& \geq & \frac13(b_k-b_{k-1})\geq \frac12\left|\frac{\mu}{\xi}\right|.
\end{eqnarray*}
In a similar way we can show
\begin{equation}\label{eq*2}
\left|\frac{\mu}{\xi}+(a_{i_0}-\alpha_j)\right|\geq \begin{cases}\frac13|a_{i_0}-\alpha_j| & \forall j\\ \frac12\left|\frac{\mu}{\xi}\right| & \forall j\end{cases}.
\end{equation} 

We will now show that $\tilde{m}_k$ is an $L^p$-multiplier by showing that it satisfies
\eqref{lp*1}-\eqref{lp*4}. The first inequality, \eqref{lp*1}, is immediate from \eqref{lp1}. 
We now turn to \eqref{lp*2}. We have
\begin{eqnarray*}
\frac{\partial \tilde{m_k}}{\partial \xi}(\xi,\mu) & = & \theta\left(-\frac{\mu}{\xi}\frac{3}{b_k-b_{k-1}}\right)\theta\left(\frac{\mu}{\xi}\frac{3}{b_{k+1}-b_k}\right)\frac{\partial q}{\partial\xi}(\xi,\mu)\Phi'(q(\xi,\mu))\\
& & +\frac{\mu}{\xi^2}\frac{3}{b_k-b_{k-1}} \theta'\left(-\frac{\mu}{\xi}\frac{3}{b_k-b_{k-1}}\right)\theta\left(\frac{\mu}{\xi}\frac{3}{b_{k+1}-b_k}\right)\Phi(q(\xi,\mu))\\
& &  -\frac{\mu}{\xi^2}\frac{3}{b_{k+1}-b_k}\theta\left(-\frac{\mu}{\xi}\frac{3}{b_k-b_{k-1}}\right)\theta'\left(\frac{\mu}{\xi}\frac{3}{b_{k+1}-b_k}\right)\Phi(q(\xi,\mu))\\
& = & I+II+III.
\end{eqnarray*}
Now $|II|\leq \frac{1}{|\xi|}\left|\frac{\mu}{\xi}\right|\frac{3}{b_{k}-b_{k-1}}\frac{1}{\left|\frac{\mu}{\xi}\frac{3}{b_k-b_{k-1}}\right|} =\frac{1}{|\xi|};$ similarly for $|III|.$ 

For $I$ we have, using \eqref{lp2},
\begin{equation}\label{dagger}
|I|\leq \frac{C}{|q(\xi,\mu)|}\left|\frac{\partial q}{\partial \xi}(\xi,\mu)\right|\left|\theta\left(-\frac{\mu}{\xi}\frac{3}{b_k-b_{k-1}}\right)\theta\left(\frac{\mu}{\xi}\frac{3}{b_{k+1}-b_k}\right)\right|,
\end{equation}
so we need a bound for $\frac{\partial q}{\partial \xi}.$

Now 
\begin{eqnarray*}
& & \frac{\partial q}{\partial \xi}=\\
&  & q(\xi,\mu)\left\{\sum_{i=1, i\neq i_0}^{d_1} \frac{k_i(a_{i_0}-a_i)}{\mu+(a_{i_0}-a_i)\xi} + \sum_{j=1}^{d_2}2l_j\frac{(\mu+(a_{i_0}-\alpha_j)\xi)(a_{i_0}-\alpha_j)+\beta_j^2\xi}{(\mu+(a_{i_0}-\alpha_j)\xi)^2+\beta_j^2\xi^2}\right\}.
\end{eqnarray*}
Now, using \eqref{eq*1}, on the support of $\tilde{m}_k,$ 
$$\left|\sum_{i=1, i\neq i_0}^{d_1}\frac{k_i(a_{i_0}-a_i)}{\mu+(a_{i_0}-a_i)\xi}\right|\leq \frac{1}{|\xi|}\sum_{i=1,i\neq i_0}^{d_1}|k_i|\frac{|a_{i_0}-a_{i}|}{\left|\frac{\mu}{\xi}+(a_{i_0}-a_i)\right|}\leq \frac{3}{|\xi|}\sum_{i=1}^{d_1}|k_i|\leq \frac{C}{|\xi|}.$$
Similarly, using \eqref{eq*2},
\begin{eqnarray*}
& & \left|\sum_{j=1}^{d_2}2l_j\frac{(\mu+(a_{i_0}-\alpha_j)\xi)(a_{i_0}-\alpha_j)+\beta_j^2\xi}{(\mu+(a_{i_0}-\alpha_j)\xi)^2+\beta_j^2\xi^2}\right|\\
& = & \frac{1}{|\xi|}\left| \sum_{j=1}^{d_2}2l_j\frac{(\frac{\mu}{\xi}+(a_{i_0}-\alpha_j))(a_{i_0}-\alpha_j)+\beta_j^2}{(\frac{\mu}{\xi}+(a_{i_0}-\alpha_j))^2+\beta_j^2}\right|\\
& \leq & \frac{2}{|\xi|}\left\{\sum_{j=1}^{d_2}|l_j|\frac{|a_{i_0}-\alpha_j|}{\left|\frac{\mu}{\xi}+(a_{i_0}-\alpha_j)\right|}+\sum_{j=1}^{d_2}|l_j|\right\}\\
& \leq & \frac{8}{|\xi|}\sum_{j=1}^{d_2}|l_j|\leq \frac{C}{|\xi|}.
\end{eqnarray*}
Hence we have
$$\left|\frac{\partial q}{\partial \xi}(\xi,\mu)\right|\leq \frac{C}{|\xi|}|q(\xi,\mu)|$$ 
when $\theta\left(-\frac{\mu}{\xi}\frac{3}{b_k-b_{k-1}}\right)\theta\left(\frac{\mu}{\xi}\frac{3}{b_{k+1}-b_k}\right)\neq 0.$ By \eqref{dagger} this suffices to bound $|I|.$ Thus we have the required bound for \eqref{lp*2}.

The bound for \eqref{lp*3} is similar; we have
\begin{eqnarray*}
\frac{\partial \tilde{m_k}}{\partial \mu}(\xi,\mu) & = & \theta\left(-\frac{\mu}{\xi}\frac{3}{b_k-b_{k-1}}\right)\theta\left(\frac{\mu}{\xi}\frac{3}{b_{k+1}-b_k}\right)\frac{\partial q}{\partial\mu}(\xi,\mu)\Phi'(q(\xi,\mu))\\
& & -\frac{1}{\xi}\frac{3}{b_k-b_{k-1}} \theta'\left(-\frac{\mu}{\xi}\frac{3}{b_k-b_{k-1}}\right)\theta\left(\frac{\mu}{\xi}\frac{3}{b_{k+1}-b_k}\right)\Phi(q(\xi,\mu))\\
& & + \frac{1}{\xi}\frac{3}{b_{k+1}-b_k}\theta\left(-\frac{\mu}{\xi}\frac{3}{b_k-b_{k-1}}\right)\theta'\left(\frac{\mu}{\xi}\frac{3}{b_{k+1}-b_k}\right)\Phi(q(\xi,\mu))\\
& = & IV+V+VI.
\end{eqnarray*}
Then 
$$|V|\leq \frac{1}{|\xi|}\frac{3}{b_k-b_{k-1}}\frac{1}{\left|\frac{\mu}{\xi}\frac{3}{b_k-b_{k-1}}\right|}=\frac{1}{|\mu|},$$
and the bound for $|VI|$ is similar. 

For $IV$ we need a bound for $\frac{\partial q}{\partial \mu}(\xi,\mu).$ Now
 $$\frac{\partial q}{\partial \mu}(\xi,\mu) =
q(\xi,\mu)\left\{\sum_{i=1}^{d_1}\frac{k_i}{\mu+(a_{i_0}-a_i)\xi}+\sum_{j=1}^{d_2}\frac{2l_j (\mu+(a_{i_0}-\alpha_j)\xi)}{(\mu+(a_{i_0}-\alpha_j)\xi)^2+\beta_j^2\xi^2}\right\}$$
and then
\begin{eqnarray*}
& & \left|\frac{\partial q}{\partial \mu}(\xi,\mu)\right| \\
& \leq & \frac{|q(\xi,\mu)|}{|\xi|}
\left\{\sum_{i=1}^{d_1}\frac{|k_i|}{\left|\frac{\mu}{\xi}+(a_{i_0}-a_i)\right|} +\sum_{j=1}^{d_2}\frac{2l_j\left|\frac{\mu}{\xi}+(a_{j_0}-\alpha_j)\right|}{\left|\left(\frac{\mu}{\xi}+(a_{j_0}-\alpha_j)\right)^2+\beta_j^2\right|}\right\}\\
& \leq & \frac{C}{|\mu|}\sum_{i=1}^{d_1}|k_{i}|+\frac{C}{|\mu|}\sum_{j=1}^{d_2}|l_j|,\ \text{by \eqref{eq*1} and \eqref{eq*2}}\\
& \leq & \frac{C}{|\mu|}|q(\xi,\mu)|.
\end{eqnarray*}
By \eqref{lp2} we have $|IV|\leq \frac{C}{|\mu|},$ which completes the proof of  \eqref{lp*3}.

We are left with \eqref{lp*4}.  The proof of this is similar to that of \eqref{lp*3} and \eqref{lp*2} and we omit the details. 
This completes the proof of a) in the case that $b_k$ is a real root. If $b_k$ is the real part of an imaginary root the argument is similar and we omit the details. There is also a possibility that 
$b_k=a_{i_0}=\alpha_{j_0},$ for some $i_0,j_0$ or that $b_k=\alpha_{j_1}=\alpha_{j_2}$ for some $j_1,j_2,$  but again the argument follows as before. 
  
  It is easily checked that b) follows from the same argument.
  \end{proof}

\vspace{.5cm}

\begin{tabular}{lll}
Anthony Carbery & Carlos Kenig & Sarah Ziesler \\
University of Edinburgh & University of Chicago & University of Chicago\\
Edinburgh & Chicago & Chicago\\
EH9 2BJ & IL 60637 & IL 60637\\
U.K. & U.S.A. & U.S.A.
\end{tabular}

\end{document}